\newtheorem{theorem}{Theorem}[section]
\newtheorem{corollary}[theorem]{Corollary}
\newtheorem{lemma}[theorem]{Lemma}
\newtheorem{proposition}[theorem]{Proposition}
\theoremstyle{definition}
\newtheorem{remark}[theorem]{Remark}
\theoremstyle{definition}
\newtheorem{definition}[theorem]{Definition}
\theoremstyle{definition}
\newtheorem{assumption}[theorem]{Assumption}
\def\dashint{\operatorname%
{\,\,\text{\bf--}\kern-.98em\DOTSI\intop\ilimits@\!\!}}
\def\bR{\mathbb{R}}
\def\bZ{\mathbb{Z}}
\def\bH{\mathbb{H}}
\def\bL{\mathbb{L}}
\def\fH{\mathfrak{H}}
\def\cD{\mathcal{D}}
\def\cH{\mathcal{H}}
\def\cM{\mathcal{M}}
\def\cS{\mathcal{S}}
\def\cT{\mathcal{T}}
\begin{document}

\title[Trace theorem and BVP in weighted Sobolev spaces]{Trace theorem and non-zero boundary value problem for parabolic equations in weighted Sobolev spaces}

\author[D. Kim]{Doyoon Kim}
\address[D. Kim]{Department of Mathematics, Korea University, 145 Anam-ro, Seongbuk-gu, Seoul, 02841, Republic of Korea}

\email{doyoon\_kim@korea.ac.kr}

\thanks{D. Kim and K. Woo were supported by Basic Science Research Program through the National Research Foundation of Korea (NRF) funded by the Ministry of Education (2019R1A2C1084683).}

\author[K.-H. Kim]{Kyeong-Hun Kim}
\address[K.-H. Kim]{Department of Mathematics, Korea University, 145 Anam-ro, Seongbuk-gu, Seoul, 02841, Republic of Korea}

\email{kyeonghun@korea.ac.kr}

\thanks{K.-H. Kim was supported  by the National Research Foundation of Korea(NRF) funded by the Korea government(MSIT) (NRF-2020R1A2C1A01003354)}

\author[K. Woo]{Kwan Woo}
\address[K. Woo]{Department of Mathematics, Korea University, 145 Anam-ro, Seongbuk-gu, Seoul, 02841, Republic of Korea}

\email{wkwan@korea.ac.kr}

\subjclass[2020]{46E35, 35K20.}

\keywords{Weighted Sobolev spaces, Traces, Parabolic equations, Non-zero boundary value conditions}

\begin{abstract}
We present weighted Sobolev spaces $\widetilde{\fH}_{p, \theta}^{\gamma}(S, T)$ and prove a trace theorem for the spaces.
As an application, we discuss non-zero boundary value problems for parabolic equations.
The weighted parabolic Sobolev spaces we consider are designed, in particular, for the regularity theory of stochastic partial differential equations on bounded domains.
\end{abstract}

\maketitle

\section{Introduction}
\label{intro}
We introduce appropriate Sobolev type spaces along with trace and extension results for the weighted Sobolev spaces $H_{p, \theta}^{\gamma}\left( \Omega \right)$.
One needs such results when dealing with non-zero (lateral) boundary value problems for parabolic equations in the framework of weighted Sobolev spaces. Krylov first introduced the weighted Sobolev spaces $H_{p, \theta}^{\gamma}\left( \Omega \right)$ in \cite{MR1708104} for $\Omega = \bR^n_+ = \{ x = (x_1, x'): x_1 > 0, x' \in \bR^{n-1} \}$ and $\gamma \in \bR$ in order to establish an $L_p$-theory for stochastic partial differential equations (SPDEs). See, for instance, \cite{MR1720129}.
(Later in the introduction comes an explanation about why one needs $H^{\gamma}_{p,\theta}(\Omega)$-type spaces in the theory of PDEs and SPDEs.)
Note that if $\gamma$ is a non-negative integer, $H_{p, \theta}^{\gamma}$ can be characterized as
\[
H_{p, \theta}^{\gamma}(\Omega) =  \{ u: \rho(x)^{|\alpha|}D^{\alpha}u \in L_{p, \theta}(\Omega), \quad 0 \le |\alpha| \le \gamma \}, 
\]
where $\rho(x)=\text{dist}(x,\partial \Omega)$ and $L_{p, \theta}(\Omega)$ is the $L_p$ space with the measure $\rho^{\theta - n} \, dx$, and $\mathfrak{H}_{p, \theta}^\gamma(S, T)$ is the parabolic counterpart of $H_{p, \theta}^{\gamma}(\Omega)$.
See Section \ref{setting} for precise definitions of these spaces.
In particular, as is shown in \cite{MR1708104, MR1796011}, the class of infinitely differentiable functions with compact support in $\Omega$ is a dense subset of $\mathfrak{H}_{p, \theta}^\gamma(S, T)$.
This means that every function in $\fH_{p, \theta}^\gamma(S, T)$ has a zero boundary value whenever the trace (the restriction of a function on the boundary) makes sense.
Thus, one can say that in \cite{MR1708104} and related papers \cite{MR2111792, MR2990037, MR2354494, MR3147235, MR3318165, arXiv:1809.01325}, the authors deal with parabolic PDEs with the zero boundary condition.
To the best of the authors' knowledge, parabolic equations with non-zero boundary conditions in the above $\fH_{p, \theta}^\gamma$-type spaces have not been considered.
To cover this, we need to identify not only appropriate classes of functions for boundary values, but also new solution spaces in which functions have nontrivial boundary values, so that the traces of functions in the new solution spaces belong to the aforementioned function spaces on the boundary.
For elliptic problems, the corresponding weighted Sobolev spaces and their properties are presented in \cite{MR1708104, MR1760536, MR1796011, MR2111792, MR2386392}.
In particular, in \cite{MR2386392} one can find a trace theorem for weighted Sobolev spaces and a solvability result for elliptic equations with non-zero boundary conditions.

In this paper, as solution spaces for non-zero parabolic boundary value problems, we present weighted Sobolev spaces $\widetilde{\fH}_{p, \theta}^{\gamma}(S, T)$ (see Definition \ref{sol_space}).
Then, we prove that the parabolic Slobodeckij spaces $W_{p}^{\mathsf{s}/2, \mathsf{s}}((S, T) \times \partial\Omega)$ (see Definition \ref{trace_space}) are the function spaces for the boundary values.
More precisely, we prove a trace theorem (Theorem \ref{thm0716_01}) for $\widetilde{\fH}_{p, \theta}^{\gamma}(S, T)$, $\gamma \geq 1$, when $\theta \in (n-1, n-1+p)$ and $\Omega = \bR^n_+$ or $\Omega$ is a bounded Lipschitz domain, i.e., $\partial \Omega \in C^{0,1}$, so that the restriction of $u$ on $(S,T) \times \partial\Omega$ belongs to $W_{p}^{\mathsf{s}/2, \mathsf{s}}((S, T) \times \partial\Omega)$ for $u \in \widetilde{\fH}_{p, \theta}^{\gamma}(S, T)$ and each $g \in W_{p}^{\mathsf{s}/2, \mathsf{s}}((S, T) \times \partial\Omega)$ can be extended to a function in $\widetilde{\fH}_{p, \theta}^{\gamma}(S, T)$ along with the boundedness of the trace (restriction) and extension operators. As also noticed in \cite{MR2386392}, due to the characteristic of $\widetilde{\fH}_{p, \theta}^{\gamma}(S, T)$, the parameter $\mathsf{s}$ in $W_{p}^{\mathsf{s}/2, \mathsf{s}}$ is independent of $\gamma$.
Regarding the boundedness of the trace operator, the main difficulty is due to the fact that $u_t$ has low regularity when $u \in \widetilde{\fH}_{p, \theta}^1(S, T)$ ($\gamma = 1$). See Remark \ref{rem0426_1}.
To overcome this, we take an appropriate integral representation of $u$ based on the argument in \cite{MR0152793}. Also see \cite{MR1257932}.
In our case, we use mollifications instead of kernels in the integral form used in \cite{MR0152793} and \cite{MR1257932}.
We remark that using mollifications turns out to be very convenient when dealing with not only traces of functions, but also embeddings. See, for instance, \cite{arXiv:2103.03410}. For the boundedness of extension operator, we obtain weighted Sobolev norm estimates for solutions to the heat equation with non-homogeneous boundary values. 
As a direct application of our trace and extension results, we establish the unique solvability of second-order parabolic equations in divergence form with non-zero lateral boundary conditions in the weighted Sobolev spaces $\widetilde{\fH}_{p, \theta}^1(S, T)$ under the zero initial condition. As one can expect, the unique solvability is established by combining the trace theorem in this paper with the known result for parabolic equations with the homogeneous boundary condition, that is, as mentioned earlier, the equations with solutions in $\fH_{p, \theta}^1(S, T)$.
We also obtain the corresponding result for non-divergence form equations in $\widetilde{\fH}_{p, \theta}^2(S, T)$. In a forthcoming paper, we will extend the present results to initial traces and non-zero initial value problems.

Trace theorems for Sobolev type spaces have been studied in many papers. 
See \cite{MR0241822, MR1257932, MR1945779, MR2318575, MR2401510, MR3273638} and references therein. For recent results, we refer the reader to \cite{MR4072650}, which contains trace theorems for anisotropic mixed-norm Sobolev, Bessel potential, Triebel-Lizorkin, and Besov type spaces with space-time power weights.
Nevertheless, our results are not covered by those, for instance, in \cite{MR4072650} in that the spaces considered there are different from ours.
In \cite{MR4072650}, the author considers function spaces for equations of non-divergence type, while our results contain trace and extension results for $\widetilde{\fH}_{p, \theta}^1(S, T)$, which is an appropriate function space for divergence type parabolic equations in weighted Sobolev spaces such as $\mathfrak{H}_{p, \theta}^\gamma(S, T)$.
Another notable difference is that the proofs in \cite{MR2318575}, which is one of the main references in \cite{MR4072650}, are based on an interpolation theory of vector-valued function spaces with a semigroup approach, whereas our proofs are more elementary and self-contained. However, as mentioned above, the function spaces considered in \cite{MR4072650} are very general and the non-zero initial traces are also considered.

Among many weighted Sobolev spaces for various purposes in the theory of PDEs, $H^{\gamma}_{p,\theta}$-type spaces were necessitated by, mainly, the theory of SPDEs.
As explained in \cite{MR1262972}, the Sobolev spaces without weights turn out to be trivially inadequate for SPDEs. This is because,   unless certain compatibility condition is fulfilled, the second and higher derivatives of solutions to SPDEs blow up substantially fast near the boundary, and this blow-up is inevitable even on $C^{\infty}$ domains. On the other hand,  it turns out that such blow-up behavior can be described very accurately by a weight system based on appropriate powers of  the distance to the boundary, that is, by $H^{\gamma}_{p,\theta}$-type spaces. For SPDEs in weighted Sobolev spaces, we refer the reader to \cite{MR1720129, MR2519358, MR2102888, MR2073414, MR3034605}.

Another necessity of  $H^{\gamma}_{p,\theta}$-type spaces lies in  regularity theory of (deterministic) PDEs defined on non-smooth domains, say $C^1$ domains (see Remark \ref{regularity}). Since the boundary is not supposed be regular enough, we have to look for solutions in functions spaces with weights allowing the derivatives of solutions to blow up near the boundary. In the framework of H\"older spaces such a setting leads to investigating so-called intermediate Schauder estimates. Thus, even for deterministic problems, we require appropriate weights near the boundary to estimate the derivatives of solutions of PDEs. Then, it is natural that the coefficients of lower order terms are allowed to blow up near the boundary. See Assumptions \ref{assumption_div} and \ref{assumption_nondiv}. 

The remainder of the paper is organized as follows.
In Section \ref{setting}, we introduce spaces $\widetilde{\fH}_{p, \theta}^{\gamma}(S, T)$, $W_p^{\mathsf{s}/2, \mathsf{s}}((S, T) \times \partial\Omega)$, and state the main theorem.
In Section \ref{pre} we introduce and prove elementary properties of $\widetilde{\fH}_{p, \theta}^{\gamma}(S, T)$. In Sections \ref{trace} and \ref{exten}, by using integral representations we prove Propositions \ref{prop0712_01} and \ref{prop0715_01}, which are key ingredients in the proof of the main theorem.
Then we prove the main theorem in Section \ref{main}. Finally in Section \ref{app}, as an application of the main theorem, we solve divergence and non-divergence type parabolic equations with non-zero boundary value conditions in $\widetilde{\fH}_{p, \theta}^1(S, T)$ and $\widetilde{\fH}_{p, \theta}^2(S, T)$, respectively.

In this paper, $\bR^n$ stands for the $n$-dimensional Euclidean space and $x = (x_1, \ldots, x_n)$ denotes a point in $\bR^n$. We use the following standard notation
\[
\bR^{n}_+ = \{ (x_1, x'): x_1 > 0, x' = (x_2, \ldots, x_n) \in \bR^{n-1} \},
\]
\[
D_i = \partial / \partial x_i, \quad Du = u_x = (D_1u, \ldots, D_n u).
\]
For a multi-index $\alpha = (\alpha_1, \ldots, \alpha_n)$ where $\alpha_i$'s are non-negative integers, we denote
\[
D^{\alpha} = D_1^{\alpha_1} \ldots D_n^{\alpha_n}, \quad |\alpha| = \alpha_1 + \ldots + \alpha_n.
\]
For a domain $\Omega \subset \bR^n$, by $\mathcal{D}'(\Omega)$ we mean the space of all distributions on $\Omega$.

\section{Function spaces and Main result}
	\label{setting}

Let $\Omega$ be a domain in $\bR^n$ with non-empty boundary $\partial\Omega$.
We first recall the definition of $H_{p, \theta}^{\gamma}(\Omega)$ introduced in \cite{MR1708104} and \cite{MR1796011}.
For $k \in \bZ$ and a fixed integer $k_0 > 0$, 
define subsets $\Omega_k$ of $\Omega$ by
$$
\Omega_k = \{ x \in \Omega : e^{-k-k_0} < \rho(x) < e^{-k+k_0} \},
$$
where $ \rho(x) = \rho_{\Omega}(x) = \text{dist}(x, \partial \Omega)$.
Let $\{ \zeta_k, k \in \bZ \}$ be a collection of non-negative functions with the following properties:
\begin{equation}
							\label{eq0323_03}
\zeta_k \in C_0^{\infty}(\Omega_k),
\quad
| D^m \zeta_k (x) | \le N(m) e^{mk},
\quad 
\sum_{k \in \bZ} \zeta_k(x) = 1 \quad \textrm{on} \,\, \Omega.
\end{equation}
If $\Omega_k$ is an empty set, then the corresponding $\zeta_k$ is identically zero.

\begin{definition}
							\label{def0109_1}
Let $\theta$, $\gamma \in \bR$ and $1 < p < \infty$.
We set
$$
H_{p,\theta}^{\gamma}(\Omega)
:= \left\{ u \in \cD'(\Omega) :
\| u \|_{H_{p,\theta}^{\gamma}(\Omega)}^p
:= \sum_{k \in \bZ} e^{k \theta}  \| \zeta_{-k}(e^k \cdot) u(e^k \cdot) \|_{H_{p}^{\gamma}}^p < \infty \right\}
$$
and
$$
\widetilde{H}_{p,\theta}^{\gamma}(\Omega) := \left\{ u \in \cD'(\Omega) :   \|u\|_{\widetilde{H}_{p, \theta}^{\gamma}(\Omega)}^p := \| u \|_{H_{p, \theta}^{\gamma - 1}(\Omega)}^p + \|Du\|_{H_{p, \theta}^{\gamma - 1}(\Omega)}^p < \infty   \right\},
$$
where $\| \cdot \|_{H^{\gamma}_p}$ is the norm of 
Bessel potential space $H_{p}^{\gamma}(\bR^{d})$ ($= H_{p}^{\gamma}$) and $L_{p, \theta}(\Omega)$ ($= H_{p, \theta}^{0}(\Omega)$) is the space of functions summable to the power $p$ with respect to the measure $\rho(x)^{\theta - n} \, dx$, that is,
\[
f \in L_{p,\theta}(\Omega) \quad \iff \quad \int_\Omega |f(x)|^p \rho(x)^{\theta-n} \, dx < \infty.
\]
\end{definition}

\begin{remark}
If $\gamma \geq 1$ and $\Omega$ is bounded, then the norm of $\widetilde{H}_{p,\theta}^{\gamma}(\Omega)$ is equivalent to $\| u \|_{L_{p, \theta}(\Omega)} + \|Du\|_{H_{p, \theta}^{\gamma - 1}(\Omega)}$.
Indeed, if $\gamma = 1$, the equivalence is clear even if $\Omega$ is not bounded.
If $\gamma \in (k,k+1]$, where $k$ is a positive integer, then by using Proposition \ref{propertyweighted} (5) sufficiently many times, we get
\[
\|u\|_{\bH_{p,\theta}^{\gamma-1}(\Omega)} \simeq \|u\|_{\bH_{p,\theta}^{\gamma-(k+1)}(\Omega)} + \sum_{j=2}^{k+1}\|Du\|_{\bH_{p,\theta+p}^{\gamma-j}(\Omega)},
\]
where $\gamma - (k+1) \leq 0$ and $\gamma - j \leq \gamma - 2$ for $j=2,\ldots,k+1$.
Then, by Proposition \ref{propertyweighted} (2) and (6), the right-hand side of the above equivalence is bounded by a constant times $\| u \|_{L_{p, \theta}(\Omega)} + \|Du\|_{H_{p, \theta}^{\gamma - 2}(\Omega)}$.
This implies that 
\[
\|u\|_{\widetilde{H}_{p,\theta}^{\gamma}(\Omega)} \leq N \| u \|_{L_{p, \theta}(\Omega)} + N \|Du\|_{H_{p, \theta}^{\gamma - 1}(\Omega)}.
\]
The opposite inequality is clear by again Proposition \ref{propertyweighted} (2) because $\gamma \geq 1$.
\end{remark}

\begin{remark}
							\label{rem0108_1}
As in \cite{MR2111792}, one can find $\zeta_k$ as follows.
Let $\xi \in C_0^{\infty}(\bR_{+})$ be a function satisfying 
$$
\sum_{m=-\infty}^{\infty} \xi(e^{m+t}) > 0
$$
for all $t \in \bR$. Then, for $x \in \Omega$ and $k \in \bZ = \{ 0, \pm 1, \dots \}$,
define
$$
\zeta_k(x) = \left(\sum_{\ell =-\infty}^{\infty}\xi \left(e^\ell \psi(x) \right)\right)^{-1}\xi \left(e^k \psi(x) \right),
$$
where $\psi$ is a regularized distance introduced in \cite{MR0290095}.
In particular, $\psi$ satisfies
\begin{enumerate}
\item
$N_1 \rho(x) \le \psi(x) \le N_2(n) \rho(x)$, 
$x \in \Omega$
\item
$\psi(x)$ is $C^{\infty}$ in $\Omega$ and
$$
\left| D^{\alpha} \psi(x) \right| \le N(\alpha) \, \rho(x)^{1-|\alpha|},
$$
where $N_1$, $N_2(n)$, and $N(\alpha)$ are appropriate constants.
\end{enumerate}
If $\Omega$ is a bounded $C^1$ domain, then the definition of $H_{p,\theta}^\gamma(\Omega)$ in Definition \ref{def0109_1} is equivalent to that given in \cite{MR2111792}.
\end{remark}

We have the following properties of weighted Sobolev spaces $H_{p,\theta}^{\gamma}(\Omega)$.
For details, see \cite{MR1796011}.
Note that $\psi^\nu \cH_{p,\theta}^\gamma(\Omega)$ means the collection of $u$ such that $u = \psi^\nu v$ for some $v \in \cH_{p,\theta}^\gamma(\Omega)$, where $\nu \in \bR$ and $ 0 < \psi \in C^\infty(\Omega)$.

\begin{proposition}\label{propertyweighted}
Let $\theta$, $\gamma \in \bR$ and $1 < p < \infty$.
\begin{enumerate}

\item
$C_0^{\infty}(\Omega)$ is dense in $H_{p,\theta}^{\gamma}(\Omega)$.

\item
If $\gamma_1 > \gamma_2$, then 
$H_{p,\theta}^{\gamma_1}(\Omega) \subset 
H_{p,\theta}^{\gamma_2}(\Omega)$.

\item
If $\psi$ is a $C^{\infty}(\Omega)$ function so that 
$N_1 \rho(x) \le \psi(x) \le N_2 \rho(x)$, then
$\psi H_{p, \theta}^{\gamma}(\Omega) = H_{p, \theta - p}^{\gamma}(\Omega)$.

\item
If $\gamma$ is a non-negative integer, then 
$$
H_{p, \theta}^{\gamma}(\Omega)
= \{ u : \rho(x)^{|\alpha|} D^{\alpha}u \in L_{p, \theta}(\Omega), 0 \le |\alpha| \le \gamma \}.
$$

\item
$u \in H_{p, \theta}^{\gamma}(\Omega)$ iff
$u \in H_{p, \theta}^{\gamma-1}(\Omega)$ and
$Du \in H_{p, \theta+p}^{\gamma-1}(\Omega)$. 
In addition,
the norm $\| u \|_{H_{p, \theta}^{\gamma}(\Omega)}$ is equivalent to
$ \| u \|_{H_{p, \theta}^{\gamma-1}(\Omega)} + \| u_x \|_{H_{p, \theta+p}^{\gamma-1}(\Omega)}$.

\item
Let $\Omega$ be a bounded domain.
Then for $\theta_1 < \theta_2$, we have
$$
H_{p, \theta_1}^{\gamma}(\Omega) \subset H_{p, \theta_2}^{\gamma}(\Omega)
\quad \mbox{and} \quad
\| u \|_{H_{p, \theta_2}^{\gamma}(\Omega)}
\le N \| u \|_{H_{p, \theta_1}^{\gamma}(\Omega)},
$$
where $N$ is independent of $ u \in H_{p, \theta_1}^{\gamma}(\Omega)$.

\end{enumerate}
\end{proposition}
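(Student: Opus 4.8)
The plan is to reduce each of the six assertions to the corresponding statement for the unweighted Bessel potential spaces $H_p^\gamma(\bR^n)$, exploiting the dilation $x\mapsto e^{k}x$ built into the norm $\|\cdot\|_{H_{p,\theta}^\gamma(\Omega)}$. After this change of variables the $k$-th summand
\[
e^{k\theta}\,\bigl\|\zeta_{-k}(e^{k}\cdot)\,u(e^{k}\cdot)\bigr\|_{H_p^\gamma}^p
\]
lives ``at unit scale'', on the annulus $\Omega_{-k}$ one has $\rho\simeq e^{k}$ and $|D^m\zeta_{-k}|\le N(m)e^{-mk}$, and the family $\{\Omega_k\}_{k\in\bZ}$ has finite overlap. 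Granting this, I would handle \emph{(2)} at once by applying the embedding $H_p^{\gamma_1}\hookrightarrow H_p^{\gamma_2}$ termwise, and \emph{(6)} by noting that for bounded $\Omega$ one has $\rho\le R<\infty$, so $\Omega_{-k}=\varnothing$ and the $k$-th summand vanishes once $k$ exceeds a threshold $k_1=k_1(R,k_0)$; for the remaining indices $k<k_1$ one simply estimates $e^{k\theta_2}\le e^{k_1(\theta_2-\theta_1)}e^{k\theta_1}$.

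For \emph{(4)} I would use that for integer $\gamma$ the $H_p^\gamma$ norm is equivalent to the $W_p^\gamma$ norm, expand $D^\alpha\bigl[\zeta_{-k}(e^k\cdot)u(e^k\cdot)\bigr]$ by the Leibniz rule, and track the dilation factors: a derivative $D^\beta$ landing on $u(e^k\cdot)$ produces $e^{k|\beta|}$, which combines with the measure $\rho^{\theta-n}\,dx\simeq e^{k(\theta-n)}\,dx$ on $\Omega_{-k}$ and the Jacobian $e^{kn}$ to reproduce exactly the weight $\rho^{|\beta|}$ sitting next to $D^\beta u$ in the claimed description; summing over $k$ and using finite overlap converts $\sum_k$ back into $\int_\Omega$. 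The same bookkeeping gives \emph{(5)}: since
\[
D\bigl[\zeta_{-k}(e^k\cdot)u(e^k\cdot)\bigr]=e^{k}(D\zeta_{-k})(e^k\cdot)\,u(e^k\cdot)+e^{k}\zeta_{-k}(e^k\cdot)(Du)(e^k\cdot),
\]
the first term carries $e^{k}|D\zeta_{-k}|=O(1)$ and is absorbed into the $\|u\|_{H_{p,\theta}^{\gamma-1}}$ part, while in the second the leftover factor $e^{k}$, once raised to the power $p$, turns $e^{k\theta}$ into $e^{k(\theta+p)}$ --- precisely the index shift $\theta\mapsto\theta+p$ attached to $Du$; combined with the unweighted equivalence $\|v\|_{H_p^\gamma}\simeq\|v\|_{H_p^{\gamma-1}}+\|Dv\|_{H_p^{\gamma-1}}$ this yields the asserted equivalence.

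For \emph{(3)} I would argue as in \emph{(5)}, using the derivative bounds $|D^\alpha\psi|\le N\rho^{1-|\alpha|}$ (available, e.g., for the regularized distance of Remark \ref{rem0108_1}), so that $e^{-k}\psi(e^k\cdot)$ and all its derivatives are bounded uniformly in $k$ on the relevant region; multiplication by this normalized bump sends the weight $e^{k(\theta-p)}$ to $e^{k\theta}$, giving $\psi H_{p,\theta}^\gamma=H_{p,\theta-p}^\gamma$. Finally, for the density statement \emph{(1)} I would first truncate, setting $u_N=\bigl(\sum_{|k|\le N}\zeta_k\bigr)u$: by finite overlap, $\|u-u_N\|_{H_{p,\theta}^\gamma}^p$ is a tail of the convergent series defining $\|u\|_{H_{p,\theta}^\gamma}^p$ and hence tends to $0$; each $u_N$ is supported away from $\partial\Omega$, so mollifying it at a scale below $\mathrm{dist}(\mathrm{supp}\,u_N,\partial\Omega)$ (and, when $\Omega$ is unbounded, also truncating at infinity) produces $C_0^\infty(\Omega)$ functions converging to $u_N$ in $H_p^\gamma$ on each of the finitely many rescaled pieces, hence in $H_{p,\theta}^\gamma$. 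I expect the main obstacle to be the multiplier / change-of-partition step underlying \emph{(3)} (and implicit in \emph{(4)} and \emph{(5)}): since elements of $H_{p,\theta}^\gamma$ are only distributions, pointwise multiplier estimates are not directly available, and one must first establish --- as is standard but not entirely trivial --- that replacing $\{\zeta_k\}$ by any other family satisfying \eqref{eq0323_03} yields an equivalent norm, and then recognize the rescaled cutoffs as (bounded perturbations of) such a family.
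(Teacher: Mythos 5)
The paper does not actually prove Proposition \ref{propertyweighted}: it is quoted as known, with the reader referred to \cite{MR1796011} (and \cite{MR1708104} for $\Omega=\bR^n_+$) for details. Your sketch essentially reconstructs the arguments of those references: termwise use of $H_p^{\gamma_1}\hookrightarrow H_p^{\gamma_2}$ for (2); the observation that for bounded $\Omega$ the summands with $k\ge k_1(R,k_0)$ vanish, so $e^{k\theta_2}\le e^{k_1(\theta_2-\theta_1)}e^{k\theta_1}$ gives (6); Leibniz/dilation bookkeeping with $\rho\simeq e^{k}$ on $\Omega_{-k}$ and $e^{k}|D\zeta_{-k}|=O(1)$ for (4) and (5); multiplication by the rescaled regularized distance for (3); and truncation by $\sum_{|k|\le N}\zeta_k$ followed by mollification (plus truncation at infinity when $\Omega$ is unbounded) for (1). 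This is sound in outline, and the one place where it remains a plan is exactly the point you flag yourself: all of the absorption steps (the $D\zeta_{-k}$ term in (5), the tail estimate in (1), the reverse inclusions in (3) and (4)) rest on the ``self-improvement'' lemma that for any family $\{\xi_k\}$ with $\xi_k\in C_0^\infty(\Omega_k)$ and $|D^m\xi_k|\le N(m)e^{mk}$ one has $\sum_k e^{k\theta}\|\xi_{-k}(e^k\cdot)u(e^k\cdot)\|_{H_p^\gamma}^p\le N\|u\|_{H_{p,\theta}^\gamma(\Omega)}^p$. Contrary to your worry, this does not require anything beyond standard tools: it follows from the pointwise multiplier theorem on $H_p^\gamma(\bR^n)$ (smooth functions with all derivatives bounded are multipliers for every real $\gamma$), the finite overlap of the sets $\Omega_k$, and the boundedness on $H_p^\gamma$ of dilations by the bounded factors $e^{k-j}$, $|k-j|\le 2k_0$; it is precisely the content of the preparatory lemmas in \cite{MR1708104, MR1796011}, so a complete write-up would include it. One further small caveat: in (3) the bare inequalities $N_1\rho\le\psi\le N_2\rho$ are not sufficient when $|\gamma|\ge 1$; one also needs $|D^\alpha\psi|\le N\rho^{1-|\alpha|}$ (and consequently $|D^\alpha(1/\psi)|\le N\rho^{-1-|\alpha|}$ for the reverse inclusion $H_{p,\theta-p}^\gamma\subset\psi H_{p,\theta}^\gamma$), which you correctly import from Remark \ref{rem0108_1} and which is how the statement is intended in the cited works.
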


For $-\infty \le S <  T \le \infty$, denote
$$
\bH_{p,\theta}^\gamma(S,T) = L_p \left( (S,T), H_{p,\theta}^\gamma\left( \Omega \right) \right),
\quad
\bL_{p,\theta}(S,T) = L_p \left( (S,T), L_{p,\theta} \left( \Omega \right) \right)
$$
with the norms
\[
\|u\|_{\bH_{p,\theta}^\gamma(S,T)}^p = \int_S^T \|u(t,\cdot)\|_{H_{p,\theta}^\gamma(\Omega)}^p \, dt, \,\,
\|u\|_{\bL_{p,\theta}(S,T)}^p = \int_S^T \|u(t,\cdot)\|_{L_{p,\theta}(\Omega)}^p \, dt.
\]

\begin{definition}
	\label{sol_space}
Let $\gamma, \theta \in \bR$, and $1 < p < \infty$. We write $u \in \fH_{p,\theta}^\gamma(S,T)$ if
$$
u \in \bH_{p,\theta - p}^{\gamma}(S,T),
\quad
u_t \in \bH_{p,\theta+p}^{\gamma-2}(S,T),
$$
and write $u \in \widetilde{\fH}_{p,\theta}^\gamma(S,T)$ if
$$
u \in \bH_{p, \theta}^{\gamma - 1}(S,T),
\quad
Du \in \bH_{p,\theta}^{\gamma-1}(S,T),
\quad
u_t \in \bH_{p,\theta+p}^{\gamma-2}(S,T),
$$
where $u_t \in \cD'(\Omega)$ is defined by
$$
\left(u(t,\cdot), \phi \right) = \left(u(s,\cdot), \phi \right)  + \int_{s}^t \left( u_t(r,\cdot), \phi \right) \, d r
$$
for almost every $s,t \in [S,T]$.
The notation $(w, \phi)$ is to be interpreted as applying a test function $\phi \in C_0^\infty(\Omega)$ to a distribution $w$ in $\cD'(\Omega)$. 
For a non-negative integer $k$, we write $ u \in \mathcal{W}_{p, \theta}^{k+1}\left( S,T \right)$ if
\[
D^{\alpha}u, D^{\beta}u_t \in \bL_{p, \theta}(S,T)
\]
for any multi-indices (with respect to the spatial variables) $\alpha$ and $\beta$ such that $|\alpha| \le k+1$ and $|\beta| \le k-1$.
We set the norms of the these spaces as 
\[
\begin{aligned}
&\|u\|_{\fH_{p,\theta}^\gamma(S,T)} := \|u\|_{\bH_{p,\theta - p}^{\gamma}(S,T)} + \|u_t\|_{\bH_{p,\theta+p}^{\gamma-2}(S,T)},
\\
&\|u\|_{\widetilde{\fH}_{p,\theta}^\gamma(S,T)} := \| u \|_{\bH_{p, \theta}^{\gamma - 1}(S, T)} + \|Du\|_{\bH_{p,\theta}^{\gamma-1}(S,T)} + \|u_t\|_{\bH_{p,\theta+p}^{\gamma-2}(S,T)},
\\
&\left\| u \right\|_{\mathcal{W}_{p, \theta}^{k+1}\left( (S, T) \times \Omega \right)} := \sum_{|\alpha| \le k+1}\left\| D^{\alpha}u \right\|_{\bL_{p, \theta}\left( S, T  \right)} + \sum_{|\beta| \le k-1} \left\| D^{\beta}u_t \right\|_{\bL_{p, \theta}\left( S, T  \right)}.
\end{aligned}
\]
\end{definition}

The fact that $\widetilde{\fH}_{p,\theta}^{\gamma}(S,T)$ is a Banach space is proved in Lemma \ref{lem0728_1}.
In particular, if $\gamma=1$ and $(S, T) = \bR$, we have
$$
\widetilde{\fH}_{p,\theta}^1 (\bR) = \{ u : u, Du \in \bL_{p,\theta}(\bR), u_t \in \bH_{p,\theta+p}^{-1}(\bR) \}.
$$

For the remainder of this section, we assume that $\Omega = \bR^n_+$ or $\Omega$ is a bounded Lipschitz domain, i.e. $\partial\Omega \in C^{0, 1}$.
Note that if $v \in \bH_{p, \theta + p}^{-1}(S,T)$, there exist $g_i \in \bL_{p, \theta}(S,T)$, $i = 1, \ldots, n$, such that $v = \sum_i D_ig_i$ in $(S,T) \times \Omega$.
Also note that $\| v \|_{\bH_{p, \theta + p}^{-1}(S, T)}$ is equivalent to $\sum_i \| g_i \|_{\bL_{p, \theta}(S, T)}$.
Thus, if $u_t \in \bH_{p, \theta + p}^{-1}(S, T)$, there exist $g_i \in \bL_{p, \theta}(S,T)$, $i = 1, \ldots, n$, such that
$$
\int_{(S, T) \times \Omega} u  \phi_t \,dx\,dt = \int_{(S, T) \times \Omega} g_i D_i\phi \,dx\,dt
$$
for all $\phi \in C_0^{\infty}((S, T) \times \Omega)$.
For details about the statements regarding $\bH_{p, \theta + p}^{-1}(S,T)$ and the norm equivalence, see \cite[Remark 5.3]{MR1708104} for $\Omega = \bR^n_+$ and \cite[Theorem 3.2]{MR1796011} (with a diffeomorphism introduced in \cite{MR2352844}) for $\partial \Omega \in C^{0, 1}$. \label{page_ref0302_1}

To discuss boundary values of functions defined on $(S,T) \times \Omega$, we introduce function spaces defined on the lateral boundaries of cylindrical domains.
For $\Omega = \bR^n_+$, $1 \le p<\infty$, $0<\mathsf{s}<1$, and a function $g(t,x')$ defined on $(S,T) \times \bR^{n-1}=(S,T) \times \partial \Omega$, we write $g \in W_p^{\mathsf{s}/2,\mathsf{s}}\left((S,T) \times \bR^{n-1}\right)$ if
$$
\|g\|_{W^{\mathsf{s}/2,\mathsf{s}}_p\left((S,T) \times \bR^{n-1}\right)}=\|g\|_{L_p}+[g]_{W^{\mathsf{s}/2,0}_p}+[g]_{W^{0,\mathsf{s}}_p} < \infty,
$$
where 
$$
[g]_{W_p^{\mathsf{s}/2,0}}^p = [g]_{W_p^{\mathsf{s}/2,0}\left((S,T) \times \bR^{n-1}\right)}^p = \int_{S}^T \int_{S}^T \frac{\|g(r,\cdot) - g(t,\cdot)\|_{L_p(\bR^{n-1})}^p}{|t-s|^{1+\mathsf{s}p/2}} \, ds \, dt,
$$
$$
[g]_{W^{0,\mathsf{s}}_p}^p = [g]_{W^{0,\mathsf{s}}_p\left((S,T) \times \bR^{n-1}\right)}^p = \int_{S}^T \int_{\bR^{n-1} \times \bR^{n-1}} \frac{|g(t,x') - g(t,y')|^p}{|x'-y'|^{n-1+\mathsf{s}p}} \, dx' \, dy'\,dt.
$$
In particular, if $(S, T) = \bR$, we see that
\begin{equation}
							\label{eq0401_04}
\begin{aligned}
\left[g\right]_{W_p^{\mathsf{s}/2,0}(\bR \times \bR^{n-1})}^p &= \int_\bR |\tau|^{-(1+\mathsf{s}p/2)} \| g(\cdot + \tau, \cdot) - g(\cdot,\cdot)\|_{L_p(\bR \times \bR^{n-1})}^p \, d\tau
\\
&= 2 \int_0^\infty \tau^{-(1+\mathsf{s}p/2)} \| g(\cdot + \tau, \cdot) - g(\cdot,\cdot)\|_{L_p(\bR \times \bR^{n-1})}^p \, d\tau.
\end{aligned}
\end{equation}
When $n=1$, $g \in W_p^{\mathsf{s}/2, \mathsf{s}}\left( \left(S, T \right) \times \bR^{n-1} \right)$ obviously means that $g \in W_p^{\mathsf{s}/2}\left(S, T \right)$ with
$$
\left\| g \right\|_{L_p\left( \left(S, T \right) \times \bR^{n-1} \right)}^p = \int_S^T \left| g \right|^p \,dt,
$$
$$
\left[ g \right]_{W_p^{\mathsf{s}/2, \mathsf{s}} \left( \left(S, T \right) \times \bR^{n-1} \right)}^p = \left[ g \right]_{W_p^{\mathsf{s}/2, 0} \left(S, T \right) }^p  = \int_S^T \int_S^T  \frac{|g(t) - g(s)|^p}{|t - r|^{1+\mathsf{s}p/2}}  \, ds \,dt .
$$
For a bounded Lipschitz domain $\Omega$, we use a partition of unity to define $W_{p}^{\mathsf{s}/2, \mathsf{s}}(\left(S, T\right) \times \partial \Omega)$.
Note that the boundary of a Lipschitz domain is locally the graph of a Lipschitz continuous function.
Thus there exist a finite number of balls $B_{j}$, $j = 1, \cdots, M$, in $\bR^n$ with radius $R$ and Lipschitz continuous functions $h_j$
defined on $\bR^{n-1}$ such that
$$
\bigcup_{j = 1}^{M} B_{j} \supset \partial\Omega
\quad
\text{and}
\quad
\Omega \cap B_j =  U_j \cap B_j,
$$
where, after relabeling and re-orienting the coordinate axes if necessary, 
$U_j = \{(x_1, x'): x_1 > h_j(x')\}$.
There also exist one-to-one functions 
$\Psi_j$ from $U_j$ onto $\bR^n_+$ satisfying 
$\Psi_j^{-1}(0,y') = \partial U_j$ for $y' \in \bR^{n-1}$.
Then we find infinitely differentiable functions 
$\varphi_{j}(x)$ defined on $\bR^n$
such that $\text{supp}\varphi_{j} \subset B_{j}$
and $\sum_{j = 1}^{M}\varphi_{j}(x) = 1$ on $\partial \Omega$.
Fix a $K > 0$ such that $\| \Psi_j \|_{C^{0, 1}} \le K$ for $j = 1, \ldots, M$.

\begin{definition}\label{trace_space}
Let $1 \le p < \infty$, $-\infty \le S < T \le \infty$, $\partial \Omega \in C^{0,1}$ and $ 0 < \mathsf{s} < 1$.
We set
\begin{multline*}
W_{p}^{\mathsf{s}/2, \mathsf{s}}(\left( S, T \right) \times \partial \Omega)
= \{ f \in L_p(\left( S, T \right) \times \partial \Omega) :
\\
(\varphi_j f)(t, \Psi_j^{-1}(0, y')) \in 
W_{p}^{\mathsf{s}/2, \mathsf{s}}(\left( S, T \right) \times \bR^{d-1}), \,  j = 1, \cdots, M \},
\end{multline*}
$$
\| f \|_{W_{p}^{\mathsf{s}/2, \mathsf{s}}(\left( S, T \right) \times \partial \Omega)} = \sum_{j=1}^{M}
\| (\varphi_j f)(\cdot, \Psi_j^{-1}(0, \cdot)) \|_{W_{p}^{\mathsf{s}/2, \mathsf{s}}(\left( S, T \right) \times \bR^{d-1})}.
$$
\end{definition}

For given $g \in W_p^{\mathsf{s}/2, \mathsf{s}}((S, T) \times \partial\Omega)$, we write $g \in W_{p, 0}^{\mathsf{s}/2, \mathsf{s}}((S, T) \times \partial\Omega)$ if there exists a sequence 
\[
\{g_k\}_{k=1}^{\infty} \subset W_p^{\mathsf{s}/2, \mathsf{s}}((-\infty, T) \times \partial\Omega) \cap C(\overline{(-\infty, T) \times \partial\Omega})
\]
such that $g_k \to g$ in $W_p^{\mathsf{s}/2, \mathsf{s}}((S, T) \times \partial\Omega)$ as $k \to \infty$ and $g_k=0$ for $t \le S$.

\begin{remark}
The norm defined above for $W_{p}^{\mathsf{s}/2, \mathsf{s}}(\left( S, T \right) \times \partial \Omega)$ 
is independent of the choice of the balls $B_j$ as well as of
the choice of the functions $\Psi_j$ and $\varphi_j$.
For details regarding this, see \cite[Lemma 3.6.1]{MR503903}.
\end{remark}

If a function $u$ is continuous up to the boundary of the domain, $\cT u$, the lateral trace operator $\cT$ applied to $u$, is simply the restriction of $u$ on the lateral boundary of the domain, i.e., $\cT u = u|_{(S,T) \times \partial \Omega}$.
Then, as usual, we define the trace operator
$$
\cT: \widetilde{\fH}_{p,\theta}^\gamma(S,T) \to W_p^{\mathsf{s}/2,\mathsf{s}}\left( (S,T) \times \partial \Omega \right)
$$
using the denseness of $C_0^\infty(\overline{(S,T) \times \Omega})$ in $\widetilde{\fH}_{p,\theta}^\gamma(S,T)$ (see Lemma \ref{lem0603_1}),
provided that the operator is bounded.
Throughout the paper, by $C_0^\infty(\overline{(S,T) \times \Omega})$, we mean the collection of infinitely differentiable functions defined on $\overline{(S,T) \times \Omega}$ having support in an intersection of a ball in $\bR^{n+1}$ with $\overline{(S,T) \times \Omega}$.
In particular, $u$ may not be zero on the boundary of $(S,T) \times \Omega$ if $u \in C_0^\infty(\overline{(S,T) \times \Omega})$, whereas, as usual, $u$ is zero on the boundary if $u \in C_0^\infty((S,T) \times \Omega)$.

As the main result of this paper, we prove that the trace operator $\cT$ is bounded and has a right inverse, which can be called an extension operator.
Here is the statement, proved in Section \ref{main}.

\begin{theorem}[Trace theorem]
							\label{thm0716_01}
Let $\gamma \ge 1$, $1 < p < \infty$, $ T \in (-\infty, \infty]$, $\Omega = \bR^n_+$ or $\partial\Omega \in C^{0, 1}$, and $n-1< \theta < n-1+p$ with $\mathsf{s} = (p-\theta+n-1)/p$.
Then the operator
$$
\cT: \widetilde{\fH}_{p,\theta}^\gamma( -\infty, T) \to W_p^{\mathsf{s}/2,\mathsf{s}}\left( (-\infty,T) \times \partial\Omega \right)
$$
with $\cT u = u|_{(-\infty,T) \times \partial\Omega}$ for $u \in \widetilde{\fH}_{p,\theta}^\gamma(-\infty,T) \cap C(\overline{(-\infty,T) \times \Omega})$ is bounded and satisfies
\begin{equation}
	\label{eq_trace}
\| \cT u \|_{W_p^{\mathsf{s}/2,\mathsf{s}}\left( (-\infty,T) \times \partial\Omega \right)} \le N \|u\|_{\widetilde{\fH}_{p,\theta}^\gamma(-\infty,T)},
\end{equation}
where $N = N(n,p,\theta)$ for $\Omega = \bR^n_+$ and $N = N(n,p,\theta, M, K, R)$ for $\partial\Omega \in C^{0, 1}$.

Moreover, $\cT$ has a right inverse (an extension operator)
$$
\cS: W_p^{\mathsf{s}/2,\mathsf{s}}\left( (-\infty,T) \times \partial\Omega  \right) \to \widetilde{\fH}_{p,\theta}^\gamma(-\infty,T)
$$
such that
$\cT(\cS g) = g$ for $g \in W_p^{\mathsf{s}/2,\mathsf{s}}\left( (-\infty,T) \times \partial\Omega \right)$ and
\begin{equation}
			\label{extension}
\|\cS g\|_{\widetilde{\fH}_{p,\theta}^\gamma(-\infty,T)}
 \le N \|g\|_{W_p^{\mathsf{s}/2,\mathsf{s}}\left( (-\infty,T) \times \partial\Omega \right)},
\end{equation}
where $N=N(n,p,\theta,\gamma)$ for $\Omega = \bR^n_+$ and $N = N(n,p,\theta, \gamma, M, K, R)$ for $\partial\Omega \in C^{0, 1}$. 

\end{theorem}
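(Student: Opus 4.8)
\emph{Proof plan.} The strategy is to peel off two reductions and then reduce matters to two half-space estimates: a trace estimate (Proposition \ref{prop0712_01}) and an extension estimate (Proposition \ref{prop0715_01}).

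\emph{Reduction in $\gamma$ for the trace bound.} By Lemma \ref{lem0603_1} it suffices to prove \eqref{eq_trace} for $u \in C_0^\infty(\overline{(-\infty,T)\times\Omega})$ and then define $\cT$ by continuity. Since $\gamma-1\ge 0$ and $\gamma-2\ge -1$, Proposition \ref{propertyweighted}(2) gives $\bH_{p,\theta}^{\gamma-1}(-\infty,T)\hookrightarrow\bL_{p,\theta}(-\infty,T)$ and $\bH_{p,\theta+p}^{\gamma-2}(-\infty,T)\hookrightarrow\bH_{p,\theta+p}^{-1}(-\infty,T)$, hence $\widetilde{\fH}_{p,\theta}^\gamma(-\infty,T)\hookrightarrow\widetilde{\fH}_{p,\theta}^1(-\infty,T)$ continuously. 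As $W_p^{\mathsf{s}/2,\mathsf{s}}$ involves no $\gamma$, the bound \eqref{eq_trace} for all $\gamma\ge 1$ follows once it is known for $\gamma=1$; this is also why $\mathsf{s}$ is $\gamma$-independent.

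\emph{Reduction to $\Omega=\bR^n_+$ and the main obstacle.} For a bounded Lipschitz $\Omega$ I would localize with the partition of unity $\{\varphi_j\}_{j=1}^M$ and flatten each boundary patch by $\Psi_j$, checking that the norms $\bH_{p,\theta}^{\gamma-1}$, $\bH_{p,\theta+p}^{-1}$ (via the divergence representation $u_t=\sum_i D_i g_i$, $g_i\in\bL_{p,\theta}$) and $W_p^{\mathsf{s}/2,\mathsf{s}}$ are comparable under the bi-Lipschitz maps $\Psi_j$; for $\gamma=1$ only first-order objects enter, so Lipschitz regularity of $\Psi_j$ suffices. This reduces the trace bound to the half-space estimate (Proposition \ref{prop0712_01}), which I would prove via a parabolic mollification representation in the spirit of \cite{MR0152793}: one writes $\cT u=u|_{x_1=0}$ as a mollified base term (controlled by $\|u\|_{\bL_{p,\theta}}$) plus an integral over the mollification scale $h$ in which differentiation in $h$ produces (i) the normal derivative $D_1u$ evaluated at height $x_1\sim h$, where the weight $h^{\theta-n}$ acts, (ii) a tangential term built from the gradient of a zero-mean mollifier, and (iii) a time term carrying $u_t$. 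Hardy- and Minkowski-type estimates of these terms reproduce exactly $[\,\cdot\,]_{W_p^{0,\mathsf{s}}}$ and $[\,\cdot\,]_{W_p^{\mathsf{s}/2,0}}$, the exponent $\mathsf{s}=(p-\theta+n-1)/p$ being forced by the weight power $\theta-n\in(-1,p-1)$, and the restriction $n-1<\theta<n-1+p$ being precisely what makes the scale integrals converge. The hard part is term (iii): because $u_t$ lies only in the negative-order weighted space $\bH_{p,\theta+p}^{-1}$ (cf.\ Remark \ref{rem0426_1}), a direct estimate of the time seminorm fails, and the mollification representation is what lets one replace the offending normal derivative $D_1g_1$ at $x_1=0$ by a scale derivative controlled by a Hardy inequality.

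\emph{Extension.} Again reduce to $\Omega=\bR^n_+$ via the $\varphi_j$, $\Psi_j$, first extending $g$ in time from $(-\infty,T)$ to $\bR$ while preserving its $W_p^{\mathsf{s}/2,\mathsf{s}}$ norm. On the half-space I would take $\cS g$ to be (a localization of) the solution of the heat equation with lateral boundary value $g$, i.e.\ a Poisson-type integral of $g$ against the half-space boundary heat kernel, so that $\cT(\cS g)=g$ holds by construction; it then remains to prove \eqref{extension}, which is the content of Proposition \ref{prop0715_01}. Using the explicit kernel, parabolic scaling, and finiteness of $\|g\|_{W_p^{\mathsf{s}/2,\mathsf{s}}}$, one bounds $\int_0^\infty\|x_1^{|\alpha|}D^\alpha w(t,x_1,\cdot)\|_{L_p}^p\,x_1^{\theta-n}\,dx_1$ and the analogue for $w_t$, the factors $x_1^{|\alpha|}$ compensating the near-boundary blow-up of the caloric extension; since $w$ is smooth in the interior, the bound for general $\gamma\ge 1$ follows from the first-order case together with interior weighted estimates. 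Reassembling the $\varphi_j$-pieces and restricting to $t<T$ then yields $\cS$ and \eqref{extension}, completing the proof.
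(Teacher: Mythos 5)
Your proposal is correct and follows essentially the same route as the paper: reduce to $\gamma=1$ via the embedding $\widetilde{\fH}_{p,\theta}^\gamma\subset\widetilde{\fH}_{p,\theta}^1$ and the density of $C_0^\infty(\overline{(-\infty,T)\times\Omega})$ (Lemma \ref{lem0603_1}), prove the half-space trace bound by the mollification-based integral representation (Proposition \ref{prop0712_01}), handle $\partial\Omega\in C^{0,1}$ by localization and flattening (Corollaries \ref{trace_optimal}, \ref{extension_lip}), and build the extension from the caloric Poisson-type integral with a cutoff (Proposition \ref{prop0715_01}). The only cosmetic omission is the explicit even extension in $t$ reducing $T<\infty$ to $T=\infty$, which you effectively account for anyway.
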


\begin{remark}
	\label{finite_time}
By using an extension with respect to the $t$ variable and then multiplying a cut-off function, one can check that Theorem \ref{thm0716_01} holds for $u \in \widetilde{\fH}_{p, \theta}^{\gamma}(S, T)$ and $g \in W_{p}^{\mathsf{s}/2, \mathsf{s}}((S, T) \times \partial\Omega)$ when $-\infty < S < T < \infty$.
In this case, the constants $N$ of \eqref{eq_trace} and \eqref{extension} depend also on $T-S$.

Indeed, by scaling we may assume that $S=0$ and $T=1$.
For the trace part, it suffices to prove
\begin{equation}
							\label{eq0302_01}
\| \cT u \|_{W_p^{\mathsf{s}/2,\mathsf{s}}\left( (0,1) \times \partial\Omega \right)} \le N \|u\|_{\widetilde{\fH}_{p,\theta}^1(0,1)}
\end{equation}
because $\|u\|_{\widetilde{\fH}_{p,\theta}^1(0,1)} \le \| u \|_{\widetilde{\fH}_{p,\theta}^{\gamma}(0,1)}$ for any $\gamma \ge 1$.
To prove this inequality, we first consider $\Omega = \bR^n_+$.
Let $u \in \widetilde{\fH}_{p,\theta}^1(0,1)$.
Then, thanks to Theorem \ref{thm0716_01}, the inequality \eqref{eq0302_01} follows if there exists $v \in \widetilde{\fH}_{p, \theta}^1(\bR)$ such that
$\cT u = \cT v$ on $(0,1) \times \partial \bR^n_+$ and
\begin{equation}
							\label{eq0302_02}
\| v \|_{\widetilde{\fH}_{p, \theta}^1(\bR)} \le N \| u \|_{\widetilde{\fH}_{p, \theta}^1(0, 1)},
\end{equation}
where $N = N(n, p, \theta)$.
We here note that $u_t = D_i g_i$ in $(0,1) \times \bR^n_+$ for some $g_i \in \bL_{p,\theta}(0,1)$.
For the existence of such $g_i\in \bL_{p,\theta}(0,1)$, see the explanation about $\bH_{p, \theta + p}^{-1}$ on page \pageref{page_ref0302_1}.
To come up with $v$ satisfying the aforementioned properties, we take smooth functions $\zeta: \bR \to \bR_+$, $\eta: \bR_+ \to \bR_+$ such that $\zeta = 1$ on $(0, 1)$, $\operatorname{supp}\zeta \subset (-1, 2)$, and $\eta = 1$ on $(0, 1)$, $\eta = 0$ on $(2, \infty)$.
Then we extend $u \in \widetilde{\fH}_{p, \theta}^1(0, 1)$ to $\overline{u} \in \widetilde{\fH}_{p, \theta}^1(-1, 2)$ by using even extensions with respect to the time variable at $t = 0$ and $1$.
Observe that $\overline{u}_t = D_i\overline{g}_i$ in $(-1, 2) \times \bR^n_+$ where $\overline{g}_i$ is the odd extension of $g_i$ with respect to the time variable at $t=0$ and $t=1$ so that $\| \overline{g}_i \|_{\bL_{p, \theta}(-1, 2)}$ is comparable with $\| g_i \|_{\bL_{p, \theta}(0, 1)}$.
Then set
\[
v(t, x) := \overline{u}(t, x)\zeta(t)\eta(x_1).
\]
We see that $v, Dv \in \bL_{p,\theta}(\bR)$ and $\||v|+|Dv|\|_{\bL_{p,\theta}(\bR)}$ is comparable with $\||u|+|Du|\|_{\bL_{p,\theta}(0,1)}$.
We also see that
\[
v_t = D_i G_i
\]
in $\bR \times \bR^n_+$, where
$$
G_1(t, x) =  \int_{x_1}^{\infty}  \left[\overline{g}_1\left(t, y_1, x' \right) \zeta \left(t \right) \eta'\left(y_1\right) +  \overline{u}\left(t, y_1, x'\right) \zeta'\left(t\right) \eta\left(y_1\right) \right] \,dy_1
$$
$$
+ \overline{g}_1(t, x)\zeta(t) \eta(x_1)
$$
and
$$
G_i(t, x) = \overline{g}_i(t, x)\zeta(t) \eta(x_1)\quad \textrm{for} \quad i = 2, \ldots, n,
$$
with the inequalities
\begin{equation}
							\label{eq0302_03}
\|G_1\|_{\bL_{p, \theta}(\bR )} \le N(n, p, \theta) \left( \|g_1\|_{\bL_{p, \theta}(0, 1) } + \|u\|_{\bL_{p, \theta}(0, 1)}  \right),
\end{equation}
$$
\|G_i\|_{\bL_{p, \theta}(\bR )} \le N(n, p, \theta) \|g_i\|_{\bL_{p, \theta}(0, 1) } \quad \textrm{for} \quad i = 2, \ldots , n.
$$
In particular, the inequality \eqref{eq0302_03} follows from the observation that
$$
\int_0^{\infty} \left| \int_{x_1}^{\infty} \overline{g}_1(t, y_1, x') \zeta(t) \eta'(y_1) +  \overline{u}(t, y_1, x') \zeta'(t) \eta(y_1) \,dy_1 \right|^p  x_1^{\theta - n} \,dx_1
$$
$$
\le N \int_0^{\infty} \left| \overline{g}_1(t,x) \zeta(t) \eta'(x_1) +  \overline{u}(t,x) \zeta'(t) \eta(x_1) \right|^p  x_1^{p+\theta - n}\,dx_1
$$
$$
\le N2^p \int_0^{\infty} \left| \overline{g}_1(t,x) \zeta(t) +  \overline{u}(t,x) \zeta'(t) \right|^p  x_1^{\theta - n}\,dx_1,
$$
where we used Hardy's inequality with $n-1 < \theta$ in the first inequality, and the fact that $\eta(x_1) = 0$ for $x_1 \geq 2$ in the second inequality.
Upon recalling the equivalence norm of $\widetilde{\fH}_{p,\theta}^1 (\bR)$ on page \pageref{page_ref0302_1}, we conclude that $v \in \widetilde{\fH}_{p,\theta}^1(\bR)$ and the inequality \eqref{eq0302_02} holds.
By the construction of $v$, it is clear that $\cT u = \cT v$ on $(0,1) \times \partial \bR^n_+$.
Therefore, the inequality \eqref{eq0302_01} is proved.

When $\partial\Omega \in C^{0, 1}$, we repeat the same argument with an extension $v(t, x) := \overline{u}(t, x) \zeta(t)$. Observe that there is no $\eta(x_1)$ since $\Omega$ is bounded.

For the extension part, we extend $g \in W_{p}^{\mathsf{s}/2, \mathsf{s}}((S, T) \times \partial\Omega)$ to $h \in W_p^{\mathsf{s}/2, \mathsf{s}}(\bR \times \partial\Omega)$ similarly as we extend $u \in \widetilde{\fH}_{p, \theta}^1(S, T)$ to $v \in \widetilde{\fH}_{p, \theta}^1(\bR)$ so that $\| h \|_{W_p^{\mathsf{s}/2, \mathsf{s}}(\bR \times \partial\Omega)}$ is comparable with $\| g\|_{W_{p}^{\mathsf{s}/2, \mathsf{s}}((S, T) \times \partial\Omega)}$.
\end{remark}

\section{Preliminaries}
	\label{pre}
In this section, we present some properties of the functions spaces in Section \ref{setting}, which are needed in the proofs of our main results.

\begin{lemma}
							\label{lem0728_1}
Let $1 < p < \infty$ and $-\infty \le S < T \le \infty$.
For any $\gamma, \theta \in \bR$, $\widetilde{\fH}_{p,\theta}^\gamma(S,T)$ is a Banach space.
\end{lemma}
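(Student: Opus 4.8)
Since $\widetilde{\fH}_{p,\theta}^\gamma(S,T)$ is manifestly a normed vector space — the three defining conditions as well as the distributional time derivative $u_t$ depend linearly on $u$, and $\|u\|_{\widetilde{\fH}_{p,\theta}^\gamma(S,T)}=0$ already forces $u=0$ via its first term — it suffices to prove completeness. The plan is the standard one: recall first that $H_{p,\theta}^{\gamma-1}(\Omega)$ and $H_{p,\theta+p}^{\gamma-2}(\Omega)$ are Banach spaces (see \cite{MR1796011}), hence so are the Bochner spaces $\bH_{p,\theta}^{\gamma-1}(S,T)=L_p((S,T),H_{p,\theta}^{\gamma-1}(\Omega))$ and $\bH_{p,\theta+p}^{\gamma-2}(S,T)=L_p((S,T),H_{p,\theta+p}^{\gamma-2}(\Omega))$.

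Given a Cauchy sequence $\{u_m\}$ in $\widetilde{\fH}_{p,\theta}^\gamma(S,T)$, the sequences $\{u_m\}$ and $\{Du_m\}$ are Cauchy in $\bH_{p,\theta}^{\gamma-1}(S,T)$ and $\{(u_m)_t\}$ is Cauchy in $\bH_{p,\theta+p}^{\gamma-2}(S,T)$, so by completeness there are $u,v\in\bH_{p,\theta}^{\gamma-1}(S,T)$ and $w\in\bH_{p,\theta+p}^{\gamma-2}(S,T)$ with $u_m\to u$, $Du_m\to v$ in $\bH_{p,\theta}^{\gamma-1}(S,T)$ and $(u_m)_t\to w$ in $\bH_{p,\theta+p}^{\gamma-2}(S,T)$. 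It then remains to show $v=Du$ and $w=u_t$ in the sense of Definition \ref{sol_space}; once this is done, $u\in\widetilde{\fH}_{p,\theta}^\gamma(S,T)$ and $\|u_m-u\|_{\widetilde{\fH}_{p,\theta}^\gamma(S,T)}\to0$ is immediate from the three component convergences. For $v=Du$ I would argue that convergence in $L_p((S,T),H_{p,\theta}^{\gamma-1}(\Omega))$ implies convergence in $\cD'((S,T)\times\Omega)$ (because $H_{p,\theta}^{\gamma-1}(\Omega)\hookrightarrow\cD'(\Omega)$ continuously), so $D_iu_m\to D_iu$ in $\cD'((S,T)\times\Omega)$; but also $D_iu_m\to v_i$ there, whence $Du=v$ by uniqueness of distributional limits.

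For $w=u_t$ I would pass to the limit in the defining identity. After extracting a subsequence (not relabeled), $u_m(t,\cdot)\to u(t,\cdot)$ in $H_{p,\theta}^{\gamma-1}(\Omega)$ for a.e.\ $t$, hence $(u_m(t,\cdot),\phi)\to(u(t,\cdot),\phi)$ for a.e.\ $t$ and each $\phi\in C_0^\infty(\Omega)$; moreover $((u_m)_t(\cdot,\cdot),\phi)\to(w(\cdot,\cdot),\phi)$ in $L_p((S,T))$, so $\int_s^t((u_m)_t(r,\cdot),\phi)\,dr\to\int_s^t(w(r,\cdot),\phi)\,dr$ for all $s,t$. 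Letting $m\to\infty$ in $(u_m(t,\cdot),\phi)=(u_m(s,\cdot),\phi)+\int_s^t((u_m)_t(r,\cdot),\phi)\,dr$ along the good set of times yields $(u(t,\cdot),\phi)=(u(s,\cdot),\phi)+\int_s^t(w(r,\cdot),\phi)\,dr$ for a.e.\ $s,t$, i.e.\ $w=u_t$, which completes the proof.

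\textbf{Main obstacle.} The only point requiring care is the identification of the time derivative: one must make sure the a.e.-in-time convergence $u_m(t,\cdot)\to u(t,\cdot)$ takes place along a single full-measure set of $t$ valid simultaneously for all the relevant test functions $\phi$, which is handled by choosing a countable subset of $C_0^\infty(\Omega)$ that is dense in the appropriate topology and using the continuous dependence of both sides of the identity on $\phi$. Everything else is soft: completeness of Bochner spaces and the continuity of the embedding $H_{p,\theta}^{\gamma-1}(\Omega)\hookrightarrow\cD'(\Omega)$.
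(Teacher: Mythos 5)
Your proposal is correct and follows essentially the same route as the paper: reduce to completeness, use that the Bochner spaces $\bH_{p,\theta}^{\gamma-1}(S,T)$ and $\bH_{p,\theta+p}^{\gamma-2}(S,T)$ are Banach, pass to a subsequence converging for a.e.\ $t$, identify $v=Du$ by testing against $\phi\in C_0^\infty(\Omega)$, and identify $w=u_t$ by passing to the limit in the defining identity via the bound $\left|\left(u^m_t(r,\cdot)-w(r,\cdot),\phi\right)\right|\le N(\phi)\,\|u^m_t(r,\cdot)-w(r,\cdot)\|_{H_{p,\theta+p}^{\gamma-2}(\Omega)}$. (The "obstacle" you flag is in fact harmless, since the a.e.\ convergence is in the $H_{p,\theta}^{\gamma-1}(\Omega)$ norm, so the exceptional $t$-set does not depend on $\phi$.)
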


\begin{proof}
We first note that, since $H_{p,\theta}^\gamma(\Omega)$ is a Banach space for any $\gamma, \theta \in \bR$ (see \cite[Remark 1.2]{MR1708104}), $\bH_{p, \theta}^\gamma(S, T) = L_p\left( (S, T), H_{p, \theta}^\gamma \left( \Omega \right) \right)$ is also a Banach space for any $\gamma, \theta \in \bR$.

To prove the lemma, we only show the completeness.
Let $\{u^m\}$ be a Cauchy sequence in $\widetilde{\fH}_{p,\theta}^\gamma(S,T)$. From the definition of $\widetilde{\fH}_{p,\theta}^\gamma(S,T)$ and the fact that $\bH_{p, \theta}^\gamma(S, T)$ is a Banach space, there exist $u, v \in \bH_{p, \theta}^{\gamma - 1}(S, T)$ and $w \in \bH_{p, \theta + p}^{\gamma - 2}(S, T)$ such that
$$
u^m \to u, \quad Du^m \to v, \quad \textrm{and} \quad u^m_t \to w 
$$
in the corresponding spaces as $m \to \infty$.
Thus, it suffices to show 
\[
v = Du \quad \textrm{and} \quad w = u_t.
\]
To prove this, we first find a subsequence, again denoted by $\{u^m\}$, satisfying the following for a.e. $t, s \in (S,T)$.
\[
u^m(t, \cdot) \to u(t, \cdot), \quad Du^m(t, \cdot) \to v(t, \cdot) \quad \textrm{in} \quad H_{p, \theta}^{\gamma-1}(\Omega),
\]
\[ u^m_t(t, \cdot) \to w(t,\cdot) \quad \textrm{in}  \quad H_{p, \theta + p}^{\gamma - 2}(\Omega)
\]
as $m \to \infty$, and 
\begin{equation}
							\label{eq0322_01}
\left( u^m \left(t, \cdot \right),  \phi \right) = \left( u^m \left(s, \cdot \right),  \phi \right) + \int_s^t\left( u^m_t \left(r, \cdot \right),  \phi \right) \,dr
\end{equation}
for $\phi \in C_0^{\infty}(\Omega)$.
Then, by the properties of $H_{p,\theta}^\gamma(\Omega)$ (in particular, see \cite[Proposition 2.4]{MR1796011}), we have
\[
(u^m(t,\cdot), \phi) \to (u(t,\cdot), \phi), \quad (Du^m(t,\cdot), \phi) \to (v(t,\cdot), \phi)
\]
\[
(u_t^m(t,\cdot),\phi) \to (w(t,\cdot), \phi)
\]
as $m \to \infty$ for almost every $t \in (S,T)$ and $\phi \in C_0^\infty(\Omega)$.
We also have
\begin{equation}
							\label{eq0323_02}
|\left(u^m_t(r,\cdot) - w(r,\cdot), \phi\right)| \leq N \|u^m_t(r,\cdot)-w(r,\cdot)\|_{H_{p,\theta+p}^{\gamma-2}(\Omega)}
\end{equation}
for almost every $r \in (S,T)$, where $N$ depends on $\phi$.
We then see that $v = Du$ and $w = u_t$.
In particular, for the latter equality, the inequality \eqref{eq0323_02} with the fact that $u_t^m$ converges to $w$ in $\bH_{p, \theta + p}^{\gamma - 2}(S, T)$ shows that
\[
\int_s^t \left(u_t^m(r,\cdot)-w(t,\cdot), \phi\right) \, dr \to 0,
\]
which along with \eqref{eq0322_01} implies
\[
\left( u \left(t, \cdot \right),  \phi \right) = \left( u \left(s, \cdot \right),  \phi \right) + \int_s^t\left( w \left(r, \cdot \right),  \phi \right) \,dr
\]
for almost every $s, t \in (S,T)$ and $\phi \in C_0^{\infty}(\Omega)$.
The lemma is proved.
\end{proof}

\begin{lemma}
							\label{lem0901}
Let $1 < p < \infty$, $T \in (-\infty, \infty]$, $\Omega = \bR^n_+$ or $\partial\Omega \in C^{0, 1}$, and $n-1 < \theta <n-1+p$.
Then
$C_0^\infty(\overline{(S,T) \times \Omega})$ is dense in $\widetilde{\fH}_{p,\theta}^1(S,T)$.
\end{lemma}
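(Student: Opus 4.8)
The plan is to approximate a given $u\in\widetilde{\fH}_{p,\theta}^1(S,T)$ by compactly supported functions that are smooth up to the lateral boundary, controlling the three pieces $\|u\|_{\bL_{p,\theta}}$, $\|Du\|_{\bL_{p,\theta}}$, $\|u_t\|_{\bH_{p,\theta+p}^{-1}}$ simultaneously. The only delicate ingredient is the last one: $u_t$ must be carried through every operation as a divergence $u_t=D_ig_i$ with $g_i\in\bL_{p,\theta}(S,T)$ and $\sum_i\|g_i\|_{\bL_{p,\theta}}\simeq\|u_t\|_{\bH_{p,\theta+p}^{-1}}$ (see page~\pageref{page_ref0302_1}). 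First I would reduce to the model case $(S,T)=\bR$ with $u$ compactly supported in $\overline{\bR\times\Omega}$: extending $u$ evenly in $t$ across a finite endpoint keeps it in $\widetilde{\fH}_{p,\theta}^1$, with $g_i$ replaced by its odd $t$-reflection, exactly as in the construction in Remark~\ref{finite_time}, and since the restriction operator in $t$ is bounded it suffices to approximate the extension over $\bR$ and restrict back. Then I would multiply by cut-offs $\chi(t/R)$ and, for $\Omega=\bR^n_+$, $\eta(x/R)$, and let $R\to\infty$; the $\bL_{p,\theta}$-parts converge at once, while the extra terms produced in $u_t$, namely $R^{-1}\chi'(t/R)\,u\,\eta$ and $R^{-1}(D_i\eta)(x/R)\,g_i\,\chi$, are compactly supported elements of $\bL_{p,\theta}$. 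For these I would use a one–dimensional dual Hardy inequality, valid precisely because $\theta>n-1$, giving a representation $f=D_1F$ with $\|F\|_{\bL_{p,\theta}}\le N\rho_0\|f\|_{\bL_{p,\theta}}$ whenever $\operatorname{supp}f\subset\{x_1<\rho_0\}$ (resp.\ a bounded set when $\Omega$ is bounded), hence $\|f\|_{\bH_{p,\theta+p}^{-1}}\le N\rho_0\|f\|_{\bL_{p,\theta}}$; the relevant products then tend to $0$.

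For $\Omega=\bR^n_+$ and $u$ now compactly supported, the main step is to extend $u$ \emph{evenly} across $\{x_1=0\}$ to $\bar u$ on $\bR\times\bR^n$ and mollify. The hypothesis $n-1<\theta<n-1+p$ is exactly the statement that $|x_1|^{\theta-n}$ is a Muckenhoupt $A_p$ weight on $\bR^{n+1}$, so convolution with a standard mollifier $\phi_\varepsilon$ is uniformly bounded on, and converges in, $L_p(\bR\times\bR^n,|x_1|^{\theta-n})$. Applying this to $\bar u$ and to $D_i\bar u$ (the even extension of $u$ and of $D_iu$, up to a sign for $i=1$) takes care of the first two norms. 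For the third I would compute $\partial_t\bar u=D_1\check g_1+\sum_{i\ge 2}D_i\hat g_i$, where $\check g_1$ is the odd and $\hat g_i$ the even extension of $g_i$ — all in the same $A_p$-weighted space — so that $\partial_t\big((\bar u*\phi_\varepsilon)|_{\bR^n_+}\big)$ is a divergence of functions converging to $g_i$ in $\bL_{p,\theta}(\bR^n_+)$. Then $(\bar u*\phi_\varepsilon)|_{\overline{\bR\times\bR^n_+}}\in C_0^\infty(\overline{\bR\times\bR^n_+})$ converges to $u$ in $\widetilde{\fH}_{p,\theta}^1(\bR)$, and restricting once more to $[S,T]\times\overline{\bR^n_+}$ completes this case.

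For a bounded Lipschitz domain $\Omega$ I would localize with a partition of unity $\{\varphi_j\}_{j=0}^M$, $\varphi_0\in C_0^\infty(\Omega)$: on $\operatorname{supp}\varphi_0$ the weight $\rho^{\theta-n}$ is comparable to $1$, so $\varphi_0 u$ (together with a divergence representation of $(\varphi_0u)_t$) is handled by plain mollification, while each $\varphi_j u$, $j\ge 1$, is supported near one boundary chart and can be transferred to the half-space by the change of variables of \cite{MR2352844} used in \cite{MR1796011}, which is smooth in the interior and satisfies $|D^m\Phi|\le N\rho^{1-m}$, reducing matters to the half-space case above. The hard part, throughout, is $u_t$: because it has only negative-order regularity, its divergence representation $D_ig_i$ must be tracked through the cut-offs (where $\theta>n-1$ enters via Hardy), through the even reflection and mollification (where the full range $n-1<\theta<n-1+p$ enters via $|x_1|^{\theta-n}\in A_p$), and, in the Lipschitz case, through the boundary flattening, where transforming $D_ig_i$ produces lower-order terms carrying exactly the powers of $\rho$ that let Proposition~\ref{propertyweighted} absorb them. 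Summing the localized pieces then yields the lemma.
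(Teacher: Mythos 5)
Your overall scheme (reduce to $(S,T)=\bR$, cut off, mollify, use that $x_1^{\theta-n}$ is an $A_p$ weight so mollification converges in the weighted $L_p$ spaces, localize for Lipschitz domains) is in the right spirit, and the cut-off and $A_p$ parts are fine. But the central step for the half space contains a genuine gap: the identity $\partial_t\bar u=D_1\check g_1+\sum_{i\ge 2}D_i\hat g_i$, with $\check g_1$ the odd and $\hat g_i$ the even reflections, is false in general. The relation $u_t=D_ig_i$ is only known against test functions vanishing near $\{x_1=0\}$, while testing $\partial_t\bar u$ against $\phi\in C_0^\infty(\bR\times\bR^n)$ leads to the even symmetrization of $\phi$, which does not vanish on $\{x_1=0\}$; the discrepancy is a boundary term, formally $-2\,g_1(t,0,x')\,d\sigma_{\{x_1=0\}}$, and there is no reason for it to vanish (indeed $g_1\in\bL_{p,\theta}$ has no trace at all). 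Concretely, take $n=1$, $u(t,x_1)=\alpha(t)\beta(x_1)$ with $\alpha,\beta$ smooth, compactly supported, $\beta(0)=1$; then $u_t=D_1g_1$ with $g_1(t,x_1)=-\alpha'(t)\int_{x_1}^\infty\beta\,dy_1\in\bL_{p,\theta}(\bR)$, but
\begin{equation*}
D_1\check g_1=\partial_t\bar u-2\alpha'(t)\Bigl(\int_0^\infty\beta\,dy_1\Bigr)\,\delta_{\{x_1=0\}}\neq\partial_t\bar u .
\end{equation*}
Consequently your conclusion that $\partial_t\bigl((\bar u*\phi_\varepsilon)|_{\bR^n_+}\bigr)$ is a divergence of functions converging to the $g_i$ in $\bL_{p,\theta}$ does not follow: the mollified surface term is concentrated in an $\varepsilon$-layer with height $\sim\varepsilon^{-1}$, and controlling it would require $L_p$-boundary regularity of $g_1$ that is not available. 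Since the preservation of the representation $u_t=D_ig_i$ is exactly the delicate point of this lemma, this is not a removable technicality but a flaw in the chosen extension.

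The paper avoids reflection altogether: it mollifies $u$ itself with a kernel $\eta_\varepsilon$ whose support lies in $\{x_1<0\}$ (a one-sided, inward-looking mollifier), so that near $\{x_1=0\}$ only interior values of $u$ are used; then $u^{(\varepsilon)}\in C_0^\infty(\overline{\bR\times\bR^n_+})$, $Du^{(\varepsilon)}=(Du)^{(\varepsilon)}$ and $u_t^{(\varepsilon)}=D_ig_i^{(\varepsilon)}$ hold with no boundary corrections, and the $A_p$ maximal-function argument you invoke gives convergence of all three pieces. (Equivalently one may first shift inward, $u(t,x_1+\lambda,x')$, as in Lemma \ref{lem0905_1}.) In the Lipschitz case the paper again does not flatten but uses the uniform exterior cone condition to choose the direction of the one-sided mollification for each $u\varphi_j$. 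If you replace your even-reflection step by such a shifted mollification, the rest of your argument (reduction to $(S,T)=\bR$, cut-offs with the dual Hardy estimate using $\theta>n-1$, localization) goes through and essentially coincides with the paper's proof.
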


\begin{proof}
As noted in Remark \ref{finite_time}, it is enough to assume $(S, T) = \bR$. Moreover, using cut-off functions with respect to $(t,x)$, we may assume that $u$ vanishes for large $|t|+|x|$ when $\Omega = \bR^n_+$.

Case 1: $\Omega = \bR^n_+$.
We have $u, Du \in \bL_{p, \theta}(\bR)$ and $u_t = D_ig_i$ in $\bR \times \bR^n_+$ with $g = \left( g_1, \ldots, g_n  \right) \in \bL_{p, \theta}(\bR)^n$.
For  a function  $\eta \in C_0^{\infty}(\bR^{n+1})$ such that $\eta \ge 0$, $\int_{\bR^{n+1}} \eta \,dx \,dt = 1$, and
\[
\operatorname{supp} \eta \subset \{(t,x): -1<t<1, |x| < 1, x_1 < 0\},
\]
take
\[
u^{(\varepsilon)}(t,x) = \int_{\bR \times \bR^n_+} \eta_\varepsilon(t-s,x-y) u(s,y) \, ds \, dy,
\]
where $\eta_\varepsilon(t,x) = \varepsilon^{-n-2}\eta(t/\varepsilon^2,x/\varepsilon)$ and $\varepsilon > 0$.
Then, by the standard properties of mollifications together with the fact that $\eta(t,x) = 0$ for $x_1 \geq0$, one can see that $u^{(\varepsilon)} \in C_0^{\infty}(\overline{\bR \times \bR^n_+})$, $Du^{(\varepsilon)} = \left( Du \right)^{(\varepsilon)}$, and $u^{\left( \varepsilon \right)}_t = D_i g_i^{\left(  \varepsilon \right)}$ in $\bR \times \bR^n_+$ for each $\varepsilon > 0$.
Now observe that for any $f \in \bL_{p, \theta}(\bR) = L_p(\bR, L_{p,\theta}(\bR^n_+))$,
$$
\left| f^{\left( \varepsilon \right)} - f \right| \le N(n) \left( \mathcal{M}\tilde{f} + \left| f \right| \right)
$$
and $\mathcal{M}\tilde{f} + \left| f \right| \in \bL_{p, \theta}(\bR)$ by Hardy-Littlewood theorem for $A_p$-weight ($x_1^{\theta - n}$ is an $A_p$-weight), where $\tilde{f}$ is the zero extension of $f$ to the whole space $\bR \times \bR^n$ and $\cM \tilde{f}$ is the maximal function of $\tilde{f}$ based on the standard parabolic cubes.
Then by Lebesgue dominated convergence theorem, it follows that $\| f^{\left( \varepsilon \right)} - f \|_{\bL_{p, \theta}(\bR)} \to 0$ as $\varepsilon \to 0$. Thus $\|u - u^{\left( \varepsilon \right)}\|_{\widetilde{\fH}_{p,\theta}^1(\bR)} \to 0$ as $\varepsilon \to 0$.

Case 2: $\Omega$ is a bounded Lipschitz domain, i.e. $\partial\Omega \in C^{0,1}$.
In this case, we use a partition of unity argument.
Since $\partial\Omega$ satisfies the uniform exterior cone condition, we use this property to take a proper mollification  for $u \varphi_i \in \widetilde{\fH}_{p, \theta}^1(\bR)$ where $\{\varphi_i\}_{i=1}^M$ is a partition of unity.
The remainder is the same as the case for $\Omega = \bR^n_+$.
\end{proof}

The following lemmas will be used to prove the denseness of smooth functions in $\widetilde{\mathfrak{H}}_{p, \theta}^{\gamma}(S, T)$($\gamma \ge 1$). See Lemma \ref{lem0603_1}.

\begin{lemma}
							\label{lem0905_1}
Let $p \in [1,\infty)$, $-\infty \le S < T \le \infty$, $\Omega = \bR^n_+$ or $\partial\Omega \in C^{0, 1}$, $-1 < \theta - n$, and $k$ be a positive integer.
Then $\mathcal{W}_{p, \theta + kp}^{k+1}\left(S, T\right)$ is a Banach space and $C_0^\infty(\overline{(S, T)\times \Omega})$ is dense in $\mathcal{W}_{p, \theta + kp}^{k+1}\left(S, T\right)$.
\end{lemma}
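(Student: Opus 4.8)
This can be obtained exactly as in the proof of Lemma \ref{lem0728_1}. Given a Cauchy sequence $\{u^m\}$ in $\mathcal{W}_{p,\theta+kp}^{k+1}(S,T)$, for each $|\alpha|\le k+1$ the functions $D^\alpha u^m$ converge in the Banach space $\bL_{p,\theta+kp}(S,T)$ to some $v_\alpha$, and $D^\beta u^m_t\to w_\beta$ in $\bL_{p,\theta+kp}(S,T)$ for $|\beta|\le k-1$. The hypothesis $\theta-n>-1$ yields $\theta+kp-n>-1$, so the weight $\rho^{\theta+kp-n}$ is locally integrable and $\bL_{p,\theta+kp}(S,T)\hookrightarrow\cD'((S,T)\times\Omega)$; by continuity of distributional differentiation, $v_\alpha=D^\alpha v_0$. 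Setting $u:=v_0$, the identity $w_0=u_t$ in the weak sense of Definition \ref{sol_space} follows verbatim as in Lemma \ref{lem0728_1}: pass to a subsequence along which \eqref{eq0322_01} holds a.e.\ and $u^m_t(r,\cdot)\to w_0(r,\cdot)$ in $L_{p,\theta+kp}(\Omega)$ a.e., use $|(u^m_t(r,\cdot)-w_0(r,\cdot),\phi)|\le N(\phi)\|u^m_t(r,\cdot)-w_0(r,\cdot)\|_{L_{p,\theta+kp}(\Omega)}$ with $L_p(S,T)$-convergence, and pass to the limit in \eqref{eq0322_01}; then $w_\beta=D^\beta u_t$ again by continuity of distributional differentiation, so $u^m\to u$ in $\mathcal{W}_{p,\theta+kp}^{k+1}(S,T)$.

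\textbf{Density.} As in Lemma \ref{lem0901}, I would first reduce (by even reflection in $t$ and a parabolic cut-off $\chi_R(t,x)=\chi(t/R,x/R)$, for which $\chi_R u\to u$ because the terms $D^\beta\chi_R\,D^{\alpha-\beta}u$ with $\beta\neq0$ are bounded and supported in $\{|t|+|x|\gtrsim R\}$) to $(S,T)=\bR$ with $u$ vanishing for large $|t|+|x|$. For $\Omega=\bR^n_+$, take the one-sided kernel of Lemma \ref{lem0901}: $\eta\in C_0^\infty(\bR^{n+1})$, $\eta\ge0$, $\int\eta=1$, $\operatorname{supp}\eta\subset\{-1<t<1,\ |x|<1,\ x_1<0\}$, $\eta_\varepsilon(t,x)=\varepsilon^{-n-2}\eta(t/\varepsilon^2,x/\varepsilon)$, and
\[
u^{(\varepsilon)}(t,x)=\int_{\bR\times\bR^n_+}\eta_\varepsilon(t-s,x-y)\,u(s,y)\,ds\,dy .
\]
As in Lemma \ref{lem0901}, the one-sided support in $x_1$ makes $u^{(\varepsilon)}\in C_0^\infty(\overline{\bR\times\bR^n_+})$ with $D^\alpha u^{(\varepsilon)}=(D^\alpha u)^{(\varepsilon)}$ ($|\alpha|\le k+1$) and $D^\beta u^{(\varepsilon)}_t=(D^\beta u_t)^{(\varepsilon)}$ ($|\beta|\le k-1$) in $\bR\times\bR^n_+$, so matters reduce to $\|f^{(\varepsilon)}-f\|_{\bL_{p,\theta+kp}(\bR)}\to0$. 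Put $a:=\theta+kp-n$; since $k\ge1$ we have $a>kp-1\ge p-1\ge0$. By Jensen, $|f^{(\varepsilon)}(t,x)|^p\le\int\eta_\varepsilon(t-s,x-y)|f(s,y)|^p\,ds\,dy$, and on $\operatorname{supp}\eta_\varepsilon$ one has $0<x_1<y_1$, hence $x_1^a\le y_1^a$; Fubini then gives $\|f^{(\varepsilon)}\|_{\bL_{p,\theta+kp}(\bR)}\le\|f\|_{\bL_{p,\theta+kp}(\bR)}$, i.e.\ the mollifiers are uniformly bounded. As $a>-1$, the measure $x_1^a\,dx\,dt$ is Radon, so continuous functions compactly supported in the open set $\bR\times\bR^n_+$ are dense in $\bL_{p,\theta+kp}(\bR)$ and $f^{(\varepsilon)}\to f$ there for such $f$; a three-$\varepsilon$ argument completes the half-space case. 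For bounded $\Omega$ with $\partial\Omega\in C^{0,1}$ I would run the same mollification patchwise, using a partition of unity $\{\varphi_j\}_{j=0}^M$ with $\varphi_0\in C_0^\infty(\Omega)$ (mollified by an ordinary kernel, where the weight is comparable to a constant) and $\operatorname{supp}\varphi_j\subset B_j$ for $j\ge1$, each $\varphi_j u$ mollified by a kernel supported in the fixed cone given by the uniform exterior cone condition, so that $\varphi_j u$ is sampled only inside $\Omega$ and the estimate above applies with $\rho_\Omega$ in place of $x_1$ (using $\rho_\Omega(x)\le N\rho_\Omega(y)$ whenever $\eta_\varepsilon(x-y)\neq0$).

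\textbf{Main obstacle.} The one non-routine point is the uniform mollification bound on $\bL_{p,\theta+kp}$. Since $a=\theta+kp-n>kp-1\ge p-1$, the weight $\rho^a$ is \emph{not} an $A_p$ weight, so the Hardy--Littlewood maximal-function argument used in Lemma \ref{lem0901} is unavailable; the observation I expect to be decisive is that it is also unnecessary — for the one-sided mollifier the elementary Jensen--Fubini estimate above does the job and requires only $a\ge0$ (which follows from $\theta-n>-1$ together with $k\ge1$), while the local integrability used throughout comes from the weaker $a>-1$.
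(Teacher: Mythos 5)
Your proof is correct, and it is worth recording how it relates to the paper's. The paper's own argument is short because it outsources the density claim: it observes that the exponent $\theta-n+kp$ is nonnegative, sets $u^{\lambda}(t,x)=u(t,x_1+\lambda,x')$, and then invokes the translation-plus-mollification proof of Kufner's Theorem 7.2 ``word for word'' (the completeness assertion is dismissed as easily seen, and your sketch of it, modeled on Lemma \ref{lem0728_1}, is fine). Your route rests on the same two pillars --- nonnegativity of the weight exponent and sampling only from points strictly inside the domain --- but fuses the inward shift and the smoothing into a single one-sided mollifier and makes the key estimate self-contained: the Jensen--Fubini bound $\|f^{(\varepsilon)}\|_{\bL_{p,\theta+kp}(\bR)}\le\|f\|_{\bL_{p,\theta+kp}(\bR)}$ (valid since $x_1\le y_1$ on the support of the shifted kernel and $a=\theta+kp-n\ge 0$), together with density of compactly supported continuous functions and a three-$\varepsilon$ argument, replaces Kufner's continuity-of-translations step. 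Your remark that the $A_p$/maximal-function device of Lemma \ref{lem0901} is both unavailable (indeed $a>p-1$) and unneeded here is exactly the right observation; it is what the reduction to Kufner silently accomplishes in the paper. The one point to state carefully in the Lipschitz case is the choice of the mollifier's cone: to guarantee that all sampled points lie in $\Omega$ and that $\rho_\Omega(x)\le N\rho_\Omega(y)$ on the support of the kernel, the cone must be taken steep enough relative to the Lipschitz constant $K$ of the local graphs (i.e., contained in the reflected uniform interior cones, suitably truncated), giving $N=N(K)$; with that specification your patchwise argument goes through and is no less rigorous than the paper's own sketch for that case.
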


\begin{proof}
It is easily seen that $\mathcal{W}_{p, \theta + kp}^{k+1}\left(S, T\right)$ is a Banach space.

To prove the second assertion, note  that the weight is $\rho(x)^{\theta - n + kp}$, where $\theta - n + kp \ge 0$ for any positive integer $k$.
Thus, one can prove the assertion by following the proof of \cite[Theorem 7.2]{MR802206}.

Indeed, for instance, if $\Omega = \bR^n_+$ and $u \in \mathcal{W}_{p, \theta + kp}^{k+1}\left( S, T \right)$, we set
$$
u^{\lambda}(t, x) := u(t, x_1 + \lambda, x') \in \mathcal{W}_{p, \theta + kp}^{k+1}\left( S, T \right)
$$
for $\lambda > 0$. Since 
$D^{\alpha}u^{\lambda}(t, x) = (D^{\alpha}u) \left( t, x_1 + \lambda, x' \right)$ and $D^{\beta}u^{\lambda}_t(t, x) = (D^{\beta}u_t) \left( t, x_1 + \lambda, x' \right)$ in $(S, T) \times \bR^n_+$ for any multi-indices $\alpha$ and $\beta$ such that $|\alpha| \le k+1$ and $|\beta| \le k-1$, by following the proof of \cite[Theorem 7.2]{MR802206} word for word, we obtain the desired result.
\end{proof}

\begin{lemma}
							\label{lem0910_01}
Let $1 < p < \infty$, $-\infty \le S < T \le \infty$, and $\Omega = \bR^n_+$ or $\partial\Omega \in C^{0, 1}$.
Assume that $n - 1 < \theta$ and $k$ is a positive integer. Then	
$$
\mathcal{W}_{\theta + kp}^{k+1}(S, T ) \subset \widetilde{\fH}_{p, \theta}^k(S, T).
$$
\end{lemma}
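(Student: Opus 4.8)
The plan is to prove the quantitative form of the inclusion, namely that
\[
\|u\|_{\widetilde{\fH}_{p,\theta}^k(S,T)} \le N\,\|u\|_{\mathcal{W}_{p,\theta+kp}^{k+1}(S,T)}
\]
for $u \in C_0^\infty(\overline{(S,T)\times\Omega})$, and then to pass to a general $u \in \mathcal{W}_{p,\theta+kp}^{k+1}(S,T)$ by the denseness of smooth functions from Lemma \ref{lem0905_1} together with the completeness of $\widetilde{\fH}_{p,\theta}^k(S,T)$ from Lemma \ref{lem0728_1} (the two limits agreeing as distributions on $(S,T)\times\Omega$). By Fubini's theorem it is enough to establish, for a.e.\ fixed $t$ and smooth $u$, the spatial estimate
\[
\|u(t,\cdot)\|_{H_{p,\theta}^{k-1}(\Omega)} + \|Du(t,\cdot)\|_{H_{p,\theta}^{k-1}(\Omega)} + \|u_t(t,\cdot)\|_{H_{p,\theta+p}^{k-2}(\Omega)} \le N \sum_{|\alpha|\le k+1}\|D^\alpha u(t,\cdot)\|_{L_{p,\theta+kp}(\Omega)}.
\]
If $k=1$ the last term equals $\|u_t(t,\cdot)\|_{H_{p,\theta+p}^{-1}(\Omega)}$ and is dominated by $\|u_t(t,\cdot)\|_{L_{p,\theta+p}(\Omega)}$ by Proposition \ref{propertyweighted}(2), so nothing is required for it; if $k\ge2$, Proposition \ref{propertyweighted}(4) reduces it to bounding $\|D^\alpha u_t(t,\cdot)\|_{L_{p,\theta+(|\alpha|+1)p}(\Omega)}$ for $|\alpha|\le k-2$. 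Likewise Proposition \ref{propertyweighted}(4) reduces the first two terms to bounding $\|D^\alpha u(t,\cdot)\|_{L_{p,\theta+\ell p}(\Omega)}$ for $|\alpha|\le k$ with $\ell=(|\alpha|-1)_+$. Thus the whole problem is: for a smooth $w$ ($=u$ or $u_t$) and an index pair with $\ell\le k-1$, bound $\|D^\alpha w(t,\cdot)\|_{L_{p,\theta+\ell p}(\Omega)}$ by $\sum_{|\beta|\le k+1}\|D^\beta w(t,\cdot)\|_{L_{p,\theta+kp}(\Omega)}$.

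The engine for this is an iterated Hardy inequality, each step trading one spatial derivative for one more power $p$ in the weight. On $\Omega=\bR^n_+$, with $\rho(x)=x_1$, we slice in $x'$ and apply in the $x_1$-variable the one-dimensional Hardy inequality
\[
\int_0^\infty |v(x_1)|^p x_1^{a}\,dx_1 \le N\int_0^\infty |\partial_1 v(x_1)|^p x_1^{a+p}\,dx_1 \qquad (a>-1,\ v \text{ compactly supported in }[0,\infty)),
\]
which is exactly the inequality already invoked in Remark \ref{finite_time}; iterating it $k-\ell$ times gives $\|D^\alpha w(t,\cdot)\|_{L_{p,\theta+\ell p}(\bR^n_+)}\le N\sum_{m}\|D_1^{\,m}D^\alpha w(t,\cdot)\|_{L_{p,\theta+kp}(\bR^n_+)}$. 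Every exponent arising is of the form $\theta+jp-n$ with $j\ge\ell\ge0$, hence $>-1$ by the hypothesis $\theta>n-1$, so every step is admissible; and a function in $C_0^\infty(\overline{(S,T)\times\bR^n_+})$ has compact $x$-support, so there is no boundary term at $x_1=\infty$. Counting, the highest derivative order ever needed is $k+1$ when $w=u$ (worst case $|\alpha|=k$, $\ell=k-1$) and $k-1$ when $w=u_t$ (worst case $|\alpha|=k-2$, $\ell=k-1$); these are precisely the orders furnished by $\mathcal{W}_{p,\theta+kp}^{k+1}(S,T)$, which is the reason the index is $k+1$. Integrating the fixed-time estimates over $t$ settles the half-space case.

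For a bounded Lipschitz domain we follow the same scheme, replacing $x_1$ by the regularized distance $\psi$ of Remark \ref{rem0108_1} and using a partition of unity near $\partial\Omega$ to reduce each step to the one-dimensional Hardy inequality in the direction transversal to the local Lipschitz graph $x_1=h_j(x')$, along which $\rho$ is comparable to $x_1-h_j(x')$. The one genuinely new ingredient — and the part I expect to be the main obstacle — is that, since the functions in $C_0^\infty(\overline{(S,T)\times\Omega})$ need not vanish on $\partial\Omega$, a Hardy step on a bounded interval produces an interior remainder of the form $\int_{\{\psi>\delta\}}\big(|D^\alpha w|^p + |D\,D^\alpha w|^p\big)\,dx$ for a fixed small $\delta>0$. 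This remainder causes no harm: because $\theta>n-1$ and $k\ge1$ give $\theta+kp-n > p-1 > 0$, the weight $\rho^{\theta+kp-n}$ is bounded below by a positive constant on $\{\psi>\delta\}$, so the remainder is dominated by $N(\delta,\Omega)$ times $\sum_{|\beta|\le k+1}\|D^\beta w(t,\cdot)\|_{L_{p,\theta+kp}(\Omega)}^p$ (resp.\ $\sum_{|\beta|\le k-1}$ when $w=u_t$), i.e.\ by a piece of the target already at our disposal. Away from $\partial\Omega$ all weights are comparable to constants and nothing delicate happens. Combining the near-boundary and interior bounds and integrating in $t$ yields the asserted estimate, hence the inclusion; the remaining items — the exact bookkeeping of the Hardy chain and the standard mollification/cut-off approximation — are routine.
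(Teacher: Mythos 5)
Your proposal is correct in substance and follows essentially the paper's own route: for $\Omega=\bR^n_+$ the paper proves exactly the same three families of weighted $L_p$ inequalities (for $D^\alpha u$, $D^\alpha Du$, $D^\beta u_t$) by iterating Hardy's inequality in $x_1$, using the denseness of $C_0^\infty(\overline{(S,T)\times\Omega})$ in $\mathcal{W}^{k+1}_{p,\theta+kp}(S,T)$ from Lemma \ref{lem0905_1}, and it treats $k=1$ just as you do via $\|u_t\|_{\bH^{-1}_{p,\theta+p}}\le\|u_t\|_{\bL_{p,\theta+p}}$. The only real divergence is the bounded Lipschitz case: the paper applies, slicewise in $t$, the Hardy-type embedding of \cite[Theorem 8.4 and Remark 8.8]{MR802206} to $u(t,\cdot)\in W^{k+1}_{p,\theta+kp}(\Omega)$ and $u_t(t,\cdot)\in W^{k-1}_{p,\theta+kp}(\Omega)$, whereas you reprove that embedding by hand (partition of unity, one-dimensional Hardy along directions transversal to the local graphs, interior remainders absorbed because the weight is bounded below away from $\partial\Omega$); your sketch is viable since each Hardy step involves only a function and its first derivatives (so no chain rule on higher derivatives under the merely Lipschitz flattening), and it buys self-containedness at the cost of localization bookkeeping that the citation avoids. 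One bookkeeping correction: reducing the first two terms to $\|D^\alpha u\|_{L_{p,\theta+(|\alpha|-1)_+p}}$ for $|\alpha|\le k$ only yields $u\in\bL_{p,\theta}(S,T)$ and $Du\in\bH^{k-1}_{p,\theta}(S,T)$, while the definition of $\widetilde{\fH}^{k}_{p,\theta}(S,T)$ also requires $u\in\bH^{k-1}_{p,\theta}(S,T)$, i.e. $D^\alpha u\in\bL_{p,\theta+|\alpha|p}(S,T)$ for all $|\alpha|\le k-1$ by Proposition \ref{propertyweighted}(4); on the unbounded half-space these norms are not implied by the smaller-exponent ones (Proposition \ref{propertyweighted}(6) requires $\Omega$ bounded). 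This is harmless: the same Hardy chain with $k-|\alpha|$ iterations gives precisely these estimates using only $k$-th order derivatives of $u$ (the paper's first displayed inequality is exactly this family), so your argument closes once that family is added to the list.
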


\begin{proof}
Recall that $u \in \widetilde{\fH}_{p, \theta}^k(S, T)$ means $u \in \bH_{p, \theta}^{k-1}(S, T)$, $Du \in \bH_{p, \theta}^{k - 1}(S, T)$, and $u_t \in\bH_{p, \theta + p}^{k - 2}(S, T)$.

Case 1: $\Omega = \bR^n_+$.
By Hardy's inequality with the denseness of $C_0^{\infty}(\overline{(S, T) \times \Omega})$ in $\mathcal{W}_{\theta + kp}^{k+1}(S, T)$, we have
$$
\int_{S}^T \int_{\Omega} |D^{\alpha}u|^p x_1^{\theta - n + |\alpha|p} \,dx\,dt \le N \int_{S}^T \int_{\Omega} |D^{k}u|^p x_1^{\theta - n + kp} \,dx\,dt,
$$
$$
\int_{S}^T \int_{\Omega} |D^{\alpha}Du|^p x_1^{\theta - n + | \alpha |p} \,dx\,dt \le N \int_{S}^T \int_{\Omega} |D^{k+1}u|^p x_1^{\theta - n + kp} \,dx\,dt
$$
for any multi-index $\alpha$ such that $0 \le | \alpha | \le k$, and if $k \ge 2$,
$$
\int_{S}^T \int_{\Omega} |D^{  \beta} u_t |^p x_1^{\theta - n + \left(|  \beta | + 1 \right)p} \,dx\,dt \le N \int_{S}^T \int_{\Omega} |D^{k - 1}u_t |^p x_1^{\theta - n + kp} \,dx\,dt
$$
for any multi-index $\beta$ such that $0 \le |\beta| \le k-2$.
The case $k=1$ directly follows from the inequality
\[
\| u_t \|_{\bH_{p, \theta + p}^{-1}(S, T)} \le  \| u_t\|_{\bL_{p, \theta + p}(S, T)}.
\]

Case 2: $\Omega$ is a bounded Lipschitz domain, i.e. $\partial \Omega \in C^{0, 1}$.
Note that 
\[
u(t,\cdot) \in W_{p,\theta+kp}^{k+1}(\Omega), \quad u_t(t, \cdot) \in W_{p, \theta + kp}^{k-1}(\Omega)
\]
for a.e. $t \in (S,T)$, where
\[
W_{p, \vartheta}^\ell(\Omega) := \{ v \in \mathcal{D}'(\Omega) : D^{\alpha}v \in L_{p, \vartheta}(\Omega), 0 \le |\alpha| \le \ell \}.
\]
Then by the (Hardy type inequality) embedding theorem in \cite[Theorem 8.4 and Remark 8.8]{MR802206}, we again have the above three inequalities.
The lemma is proved.
\end{proof}
By following the proof of \cite[Lemma 2.17]{MR2386392}, we obtain following result.
Indeed, the lemma holds for $\gamma = 1$ by Lemma \ref{lem0901}.

\begin{lemma}
							\label{lem0911}
Let $\gamma, \theta \in \bR$, $1 < p < \infty$, $-\infty \le S < T \le \infty$, and $\Omega = \bR^n_+$.
For any $u \in \widetilde{\fH}_{p,\theta}^\gamma(S,T)$, there is a sequence $\{ u_k \}$ in $\widetilde{\fH}_{p,\theta}^\gamma(S,T)$ such that $u_k = 0$ for large values of $x_1$ and $u_k \to u$ in $\widetilde{\fH}_{p,\theta}^\gamma(S,T)$ as $k \to \infty$.
\end{lemma}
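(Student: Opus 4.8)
The plan is to produce the approximating sequence by multiplying $u$ by a smooth cut-off in the $x_1$-direction which equals $1$ on a large half-space and vanishes far from the boundary. Fix $\chi \in C^\infty(\bR)$ with $0 \le \chi \le 1$, $\chi \equiv 1$ on $(-\infty,1]$, and $\chi \equiv 0$ on $[2,\infty)$, and for $k \in \bN$ set $\chi_k(x) := \chi(x_1/k)$, viewed as a function on $\bR^n_+$ depending only on $x_1$. Then $D^\alpha\chi_k$ is supported in $\{k \le x_1 \le 2k\}$ for $|\alpha|\ge 1$ and $|D^\alpha\chi_k|\le N(\alpha)k^{-|\alpha|}$, so, since $\rho(x)=x_1$ on $\bR^n_+$, each of the families $\{\chi_k\}_k$ and $\{k\,D\chi_k\}_k$ obeys $\rho(x)^{|\alpha|}|D^\alpha f(x)| \le N(\alpha)$ with $N(\alpha)$ independent of $k$. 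Consequently, by the pointwise-multiplier property of the weighted spaces $H^\nu_{p,\vartheta}(\bR^n_+)$ (see \cite{MR1796011}; for integer $\nu \ge 0$ this is a direct Leibniz computation using Proposition \ref{propertyweighted} (4)), $\chi_k$ and $k\,D\chi_k$ are pointwise multipliers on $H^\nu_{p,\vartheta}(\bR^n_+)$ with operator norms bounded uniformly in $k$, for every $\nu,\vartheta\in\bR$. Put $u_k := u\chi_k$; clearly $u_k = 0$ for $x_1 \ge 2k$.

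First I would verify $u_k \in \widetilde{\fH}_{p,\theta}^\gamma(S,T)$. Since multiplication by $\chi_k$ is bounded on $H_{p,\theta}^{\gamma-1}(\bR^n_+)$, we have $u_k \in \bH_{p,\theta}^{\gamma-1}(S,T)$; the Leibniz rule gives
\[
Du_k = (Du)\,\chi_k + u\,D\chi_k \in \bH_{p,\theta}^{\gamma-1}(S,T),
\]
using boundedness of multiplication by $\chi_k$ (applied to $Du(t,\cdot)$) and by $D\chi_k$ (applied to $u(t,\cdot)$). As $\chi_k$ is time-independent and $\chi_k\phi \in C_0^\infty(\bR^n_+)$ whenever $\phi \in C_0^\infty(\bR^n_+)$, inserting $\chi_k\phi$ into the defining identity of $u_t$ shows $(u_k)_t = \chi_k\,u_t$ in $\cD'(\bR^n_+)$, which lies in $\bH_{p,\theta+p}^{\gamma-2}(S,T)$ because multiplication by $\chi_k$ is bounded on $H_{p,\theta+p}^{\gamma-2}(\bR^n_+)$. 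Hence $u_k \in \widetilde{\fH}_{p,\theta}^\gamma(S,T)$.

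For the convergence $u_k \to u$ I would treat the three pieces of $\|u_k - u\|_{\widetilde{\fH}_{p,\theta}^\gamma(S,T)}$ separately; since $u_k - u = -u(\chi_k-1)$ and $Du_k - Du = (Du)(\chi_k-1) + u\,D\chi_k$, with $\|u\,D\chi_k\|_{\bH_{p,\theta}^{\gamma-1}(S,T)} \le N k^{-1}\|u\|_{\bH_{p,\theta}^{\gamma-1}(S,T)} \to 0$ by the bound above, everything reduces to showing, for every $v \in H_{p,\vartheta}^\nu(\bR^n_+)$,
\[
\|v(\chi_k-1)\|_{H_{p,\vartheta}^\nu(\bR^n_+)} \longrightarrow 0 \qquad (k\to\infty),
\]
and then applying this fiberwise (with $(\nu,\vartheta) = (\gamma-1,\theta)$ to $u(t,\cdot)$ and $Du(t,\cdot)$, and $(\nu,\vartheta) = (\gamma-2,\theta+p)$ to $u_t(t,\cdot)$) together with the dominated convergence theorem in $t$, the fiber norms being dominated, uniformly in $k$, by the corresponding $H$-norms of $u$, $Du$, $u_t$, which lie in $L_p(S,T)$. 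The displayed fiberwise limit follows from the density of $C_0^\infty(\bR^n_+)$ in $H_{p,\vartheta}^\nu(\bR^n_+)$ (Proposition \ref{propertyweighted} (1)): choosing $w \in C_0^\infty(\bR^n_+)$ close to $v$, one has $w(\chi_k-1) = 0$ as soon as $2k$ exceeds the $x_1$-extent of $\operatorname{supp}w$, while $\|(v-w)(\chi_k-1)\|_{H_{p,\vartheta}^\nu} \le N\|v-w\|_{H_{p,\vartheta}^\nu}$ uniformly in $k$, so $\limsup_k\|v(\chi_k-1)\|_{H_{p,\vartheta}^\nu} \le N\|v-w\|_{H_{p,\vartheta}^\nu}$ can be made arbitrarily small. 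The main (essentially the only) obstacle is the $k$-uniform multiplier bound for $\chi_k$ on $H^\nu_{p,\vartheta}(\bR^n_+)$ for arbitrary real $\nu$; for non-integer or negative $\nu$ this is not contained in the properties listed in Proposition \ref{propertyweighted} and has to be quoted from the established theory of these weighted spaces, whereas for integer $\nu\ge0$ it is elementary.
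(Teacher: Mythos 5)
Your proposal is correct and follows essentially the same route as the paper, whose proof is by reference to the cut-off argument of \cite[Lemma 2.17]{MR2386392}: multiply $u$ by cut-offs $\chi(x_1/k)$, use the $k$-uniform pointwise-multiplier bounds for functions $a$ with $\rho^{|\alpha|}|D^\alpha a|$ bounded on $H^{\nu}_{p,\vartheta}(\bR^n_+)$, and pass to the limit via density of $C_0^\infty(\bR^n_+)$ and dominated convergence in $t$. Your additional verification that $(u\chi_k)_t=\chi_k u_t$ and the treatment of the $\bH^{\gamma-2}_{p,\theta+p}$ component are exactly the parabolic adjustments the paper's reference implicitly requires, so no gap remains.
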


\section{Trace operator}
	\label{trace}

In this section, except Corollary \ref{trace_optimal}, we consider $u(t,x_1,x')$ defined on $\bR \times \bR^n_+$, that is, $\Omega = \bR^n_+$.

\begin{proposition}
							\label{prop0712_01}
Let $1 < p < \infty$, $\theta \in (n-1,n-1+p)$, and $\mathsf{s} =(n-1+p-\theta)/p$.
Then the trace operator $\cT$ from
$$
\widetilde{\fH}_{p,\theta}^1 (\bR) = \{ u \in \bL_{p,\theta}(\bR): Du \in \bL_{p,\theta}(\bR), u_t \in \bH_{p,\theta+p}^{-1}(\bR) \}
$$
to $W_p^{\mathsf{s}/2, \mathsf{s}}(\bR \times \bR^{n-1})$
is bounded.
Precisely,
\begin{equation}
	\label{eq1001}
\| u(\cdot, 0,\cdot)\|_{W_p^{\mathsf{s}/2,\mathsf{s}}(\bR \times \bR^{n-1})}
\le N \|u\|_{\widetilde{\fH}_{p,\theta}^1 (\bR)},
\end{equation}
provided that $u \in \widetilde{\fH}_{p,\theta}^1 (\bR) \cap C( \overline{\bR \times \bR^{n-1}} )$, where $N=N(n, p, \theta)$.
\end{proposition}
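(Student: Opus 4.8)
\medskip
\noindent\emph{Proposed proof.} The plan is to set $v:=u(\cdot,0,\cdot)$ and estimate the three summands of $\|v\|_{W_p^{\mathsf{s}/2,\mathsf{s}}(\bR\times\bR^{n-1})}$ --- namely $\|v\|_{L_p}$, the spatial seminorm $[v]_{W_p^{0,\mathsf{s}}}$, and the temporal seminorm $[v]_{W_p^{\mathsf{s}/2,0}}$ --- separately. Since $u$ is continuous up to $\{x_1=0\}$ and $Du\in\bL_{p,\theta}(\bR)$, for a.e.\ $t$ the slice $u(t,\cdot)$ belongs to $W^{1,p}_{\mathrm{loc}}(\{x_1>0\})$, which makes the pointwise manipulations below legitimate (a preliminary mollification in $(t,x)$ lets one assume $u$ smooth if one prefers). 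Following the remark on page~\pageref{page_ref0302_1} I would fix $g=(g_1,\dots,g_n)\in\bL_{p,\theta}(\bR)^n$ with $u_t=D_ig_i$ on $\bR\times\bR^n_+$ and $\|g\|_{\bL_{p,\theta}(\bR)}\le N\|u_t\|_{\bH_{p,\theta+p}^{-1}(\bR)}$. The arithmetic engine throughout is the weighted one--dimensional Hardy inequality
\[
\int_0^\infty\Big(\int_0^\varepsilon f(z)\,dz\Big)^p\varepsilon^{-1-\mathsf{s}p}\,d\varepsilon\le N\int_0^\infty f(z)^p z^{\theta-n}\,dz ,
\]
whose admissibility is precisely $n-1<\theta<n-1+p$; the given value $\mathsf{s}=(n-1+p-\theta)/p$ is the one for which the weight powers balance, and the companion equality $p-1-\mathsf{s}p=\theta-n$ will recur.

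\noindent\emph{The $L_p$-term and the spatial seminorm (the routine part).} For the $L_p$-term I would use the fundamental theorem of calculus in $x_1$, writing $v(t,x')=u(t,1,x')-\int_0^1D_1u(t,z_1,x')\,dz_1$, bounding $u(t,1,x')$ by an average of $u(t,z_1,x')$ over $z_1\in(1,2)$ (where $z_1^{\theta-n}\simeq1$) plus an integral of $D_1u$, and the second term by Minkowski's inequality in $x'$ together with Hölder's inequality in $z_1$ against $z_1^{\pm(\theta-n)/p}$ (using $\int_0^1z_1^{-(\theta-n)/(p-1)}\,dz_1<\infty$). For the spatial seminorm I would use, at each fixed $t$ and with $h=x'-y'$, the Gagliardo-type identity
\[
v(t,x')-v(t,y')=-\int_0^{|h|}D_1u(t,z_1,x')\,dz_1+\int_0^1 h\cdot D_{x'}u(t,|h|,x'-\sigma h)\,d\sigma+\int_0^{|h|}D_1u(t,z_1,y')\,dz_1 ,
\]
handling the two vertical integrals by Minkowski's inequality in $x'$, polar coordinates in $h$, and the weighted Hardy inequality with $f(z_1)=\|D_1u(t,z_1,\cdot)\|_{L_p(\bR^{n-1})}$, and the horizontal integral by Minkowski and translation invariance in $x'$ (which removes the $\sigma$-integral), the remaining $h$-integral reducing to $N\|D_{x'}u(t,\cdot)\|_{L_{p,\theta}(\bR^n_+)}^p$ exactly because $p-(n-1+\mathsf{s}p)+(n-2)=\theta-n$. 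Integrating in $t$ gives $\|v\|_{L_p}+[v]_{W_p^{0,\mathsf{s}}}\le N\|u\|_{\widetilde{\fH}_{p,\theta}^1(\bR)}$; note that neither step uses $u_t$.

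\noindent\emph{The temporal seminorm (the substantial part).} Here the classical kernel representation is replaced by a mollification. I would fix $\phi\in C_0^\infty(\bR^n)$ with $\int\phi=1$ and $\operatorname{supp}\phi\subset\{\tfrac12<-x_1<1,\ |x'|<1\}$, set $\phi_\varepsilon(x)=\varepsilon^{-n}\phi(x/\varepsilon)$, and introduce the mollified trace
\[
u^{(\varepsilon)}(\tau,x'):=\int_{\bR^n_+}\phi_\varepsilon(-y_1,x'-y')\,u(\tau,y)\,dy ,
\]
a genuine function because $\phi_\varepsilon(-\cdot_1,x'-\cdot')$ is a test function supported in $\{y_1\simeq\varepsilon\}\subset\bR^n_+$; moreover $u^{(\varepsilon)}(\tau,\cdot)\to v(\tau,\cdot)$ as $\varepsilon\to0$, and, by the definition of the weak time derivative together with $u_t=D_ig_i$, one has $\partial_\tau u^{(\varepsilon)}(\tau,x')=\int_{\bR^n_+}(D_i\phi_\varepsilon)(-y_1,x'-y')\,g_i(\tau,y)\,dy$, hence $|\partial_\tau u^{(\varepsilon)}(\tau,x')|\le N\varepsilon^{-n-1}\int_{\{y_1\simeq\varepsilon,\ |x'-y'|<\varepsilon\}}|g(\tau,y)|\,dy$. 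For $|t-s|\ge1$ I would simply bound $\|v(t,\cdot)-v(s,\cdot)\|_{L_p}$ by $\|v(t,\cdot)\|_{L_p}+\|v(s,\cdot)\|_{L_p}$ and use the $L_p$-estimate; for $|t-s|<1$ I would set $\varepsilon=|t-s|^{1/2}$ and split
\[
v(t,x')-v(s,x')=\bigl[v(t,x')-u^{(\varepsilon)}(t,x')\bigr]+\bigl[u^{(\varepsilon)}(t,x')-u^{(\varepsilon)}(s,x')\bigr]+\bigl[u^{(\varepsilon)}(s,x')-v(s,x')\bigr].
\]
Each outer bracket is $\int\phi_\varepsilon(-y_1,x'-y')[u(\tau,0,x')-u(\tau,y_1,y')]\,dy$, and I would write $u(\tau,0,x')-u(\tau,y_1,y')$ as a vertical increment $-\int_0^{y_1}D_1u(\tau,z_1,x')\,dz_1$ (independent of $y'$, with $y_1\le\varepsilon$, whose $L_p(\bR^{n-1})$-norm is at most $\int_0^\varepsilon\|D_1u(\tau,z_1,\cdot)\|_{L_p}\,dz_1$, to which the weighted Hardy inequality in $\varepsilon$ applies) plus a horizontal increment at the fixed height $y_1\simeq\varepsilon$ (treated by Minkowski, translation invariance, Hölder in $y_1$, and exchanging the $\varepsilon$- and $y_1$-integrals, again via $p-1-\mathsf{s}p=\theta-n$), so that the outer brackets contribute at most $N\|Du\|_{\bL_{p,\theta}(\bR)}^p$. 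The middle bracket equals $\int_s^t\partial_\tau u^{(\varepsilon)}(\tau,x')\,d\tau$; bounding it by $(t-s)^{p-1}\int_s^t|\partial_\tau u^{(\varepsilon)}|^p\,d\tau$, applying Hölder in $(\tau,x')$ against $y_1^{-(\theta-n)/(p-1)}$ (whose integral over $\{y_1\simeq\varepsilon,\ |x'-y'|<\varepsilon\}$ is $\simeq\varepsilon^{n-(\theta-n)/(p-1)}$, finite since $\theta<n-1+p$), and then using $(t-s)=\varepsilon^2$ and the substitution $\varepsilon=|t-s|^{1/2}$, one is left with $\int_0^\infty\varepsilon^{-1}\mathbf{1}_{\{y_1\simeq\varepsilon\}}\,d\varepsilon<\infty$ times $\|g\|_{\bL_{p,\theta}(\bR)}^p\le N\|u_t\|_{\bH_{p,\theta+p}^{-1}(\bR)}^p$. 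Adding the three contributions gives $[v]_{W_p^{\mathsf{s}/2,0}}\le N\|u\|_{\widetilde{\fH}_{p,\theta}^1(\bR)}$, and combining the three parts of the norm yields \eqref{eq1001}.

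\noindent\emph{Expected main obstacle.} The delicate point is the middle bracket just above: since $u_t$ lies only in $\bH_{p,\theta+p}^{-1}(\bR)$ one cannot differentiate $u$ in time at a fixed positive height, and it is only after the pre-mollification --- which turns $u_t$ into the honest function $D_ig_i^{(\varepsilon)}$ at the price of a factor $\varepsilon^{-1}$ --- that the parabolic scaling $\varepsilon^2=|t-s|$ makes the weighted powers in the Hardy-type estimate balance exactly. A secondary point of care, present in every step, is to use Minkowski's inequality rather than a crude Hölder bound and to work with the exact Hardy weight, so that no logarithmic loss occurs and the endpoint exponent $\mathsf{s}=(n-1+p-\theta)/p$ is reached.
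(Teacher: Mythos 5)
Your argument is correct, and for the crucial temporal seminorm it takes a genuinely different route from the paper. The paper (Lemma \ref{lem0712_01}) uses a full space--time parabolic mollification $u^{(\varepsilon)}$ with a \emph{fixed} scale $\varepsilon$, writes $u=u^{(\varepsilon)}-v(\cdot;\varepsilon)$ with $v$ represented as a $\lambda$-integral of $\nabla_{t,x}u$ along a parabolic path (the Il\cprime in--Solonnikov-type representation, with kernels replaced by mollifiers), moves $u_t=D_ig_i$ onto the test function to produce the three terms $V_1,V_2,V_3$, splits the $\lambda$-range at $\tau\wedge\varepsilon^2$, and only at the end sets $\varepsilon=T^{1/2}$ and lets $T\to\infty$. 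You instead mollify \emph{only in space} at each fixed time, tie the scale pointwise to the time increment via $\varepsilon=|t-s|^{1/2}$, and telescope $v(t)-v(s)$ through $u^{(\varepsilon)}$: the outer brackets use only $Du$ plus the weighted Hardy inequality, and the middle bracket uses exactly the same mechanism as the paper's $V_3$/$\Delta_\tau U_1$ terms, namely that pairing $u_t=D_ig_i$ with the one-sidedly supported spatial mollifier yields $g_i$ against $D_i\phi_\varepsilon$ at height $y_1\simeq\varepsilon$, after which the parabolic balance $p-1-\mathsf{s}p=\theta-n$ makes the $\tau$-power come out to exactly $-1$ over the dyadic window $\tau\in(y_1^2,4y_1^2)$ (I checked the exponent bookkeeping; it closes). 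Your version avoids the path-integral representation and the $V_j$ bookkeeping and is closer to classical ``average at scale $\sqrt{|t-s|}$'' trace arguments, at the cost of handling $|t-s|\ge 1$ separately (which your crude $L_p$ bound does, since $\mathsf{s}p/2>0$); the paper's version produces an estimate uniform in the two parameters $\varepsilon$ and $T$ (Lemma \ref{lem0712_01}), stated in a form the authors reuse and which they advertise as convenient beyond traces. For the $L_p$ term and the spatial seminorm your Gagliardo-type vertical--horizontal--vertical split estimates the double integral directly, whereas the paper's Lemma \ref{lem0603_2} goes through tangential difference quotients and the equivalence of the $W^{\mathsf{s}}_p$ and $B^{\mathsf{s}}_{p,p}$ seminorms; both are routine and equivalent in substance. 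Two small points to make rigorous in a final write-up: justify the identity $\partial_\tau u^{(\varepsilon)}(\tau,x')=\int (D_i\phi_\varepsilon)(-y_1,x'-y')g_i(\tau,y)\,dy$ from the paper's definition of $u_t$ applied to the fixed spatial test function (or, as you indicate, reduce first to $u\in C_0^\infty(\overline{\bR\times\bR^n_+})$ by a one-sided mollification, which is exactly the paper's Lemma \ref{lem0901}), and note that the Hölder step in the middle bracket is in the $y$-variable (inserting the weight $y_1^{\theta-n}$), not in $(\tau,x')$ as your sketch says.
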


The proof of Proposition \ref{prop0712_01} is given at the end of this section.
Using Proposition \ref{prop0712_01} (also see Lemma \ref{lem0901}) and a flattening argument for weighted Sobolev spaces on Lipschitz domains, one can prove the following corollary.
See \cite[Lemma 2.14 and 2.15]{MR2352844} for details.

\begin{corollary}
							\label{trace_optimal}
Let $p$, $\theta$, and $\mathsf{s}$ be as in Proposition \ref{prop0712_01} and $\partial\Omega \in C^{0, 1}$.
Then the operator
$$
\cT: \widetilde{\fH}_{p,\theta}^1(\bR) \to W_p^{\mathsf{s}/2,\mathsf{s}}\left( \bR \times \partial\Omega \right)
$$
with $\cT u = u|_{\bR \times \partial\Omega}$ for $u \in \widetilde{\fH}_{p,\theta}^1(\bR) \cap C(\overline{ \bR \times \Omega})$ is bounded and satisfies
\begin{equation}
	\label{eq1002}
\| \cT u \|_{W_p^{\mathsf{s}/2,\mathsf{s}}\left( \bR \times \partial\Omega \right)} \le N \|u\|_{\widetilde{\fH}_{p,\theta}^1(\bR)},
\end{equation}
where $N = N(n,p,\theta, M, K, R)$.

\end{corollary}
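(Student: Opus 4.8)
The plan is to reduce \eqref{eq1002} to the half-space estimate \eqref{eq1001} of Proposition \ref{prop0712_01} by a partition of unity together with the boundary-flattening maps $\Psi_j$. First I would fix $u \in \widetilde{\fH}_{p,\theta}^1(\bR)\cap C(\overline{\bR\times\Omega})$; by Lemma \ref{lem0901} (with $\gamma=1$) it is enough to prove the a priori bound \eqref{eq1002} for such $u$, since $\cT$ then extends to all of $\widetilde{\fH}_{p,\theta}^1(\bR)$ by density. For each $j$, because $\varphi_j u$ is supported in $B_j$ and $\Psi_j$ maps $U_j$ onto $\bR^n_+$ with $\Psi_j^{-1}(\{y_1=0\}) = \partial U_j \supset \partial\Omega\cap B_j$, the function $w_j(t,y) := (\varphi_j u)(t,\Psi_j^{-1}(y))$, extended by zero, is well defined and continuous on $\overline{\bR\times\bR^n_+}$ with support in a bounded set. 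By the definition of the norm on $W_p^{\mathsf{s}/2,\mathsf{s}}(\bR\times\partial\Omega)$ one has $\|\cT u\|_{W_p^{\mathsf{s}/2,\mathsf{s}}(\bR\times\partial\Omega)} = \sum_{j=1}^M \|w_j(\cdot,0,\cdot)\|_{W_p^{\mathsf{s}/2,\mathsf{s}}(\bR\times\bR^{n-1})}$, so by Proposition \ref{prop0712_01} applied to each $w_j$ it suffices to prove $\|w_j\|_{\widetilde{\fH}_{p,\theta}^1(\bR)} \le N\|u\|_{\widetilde{\fH}_{p,\theta}^1(\bR)}$ with $N = N(n,p,\theta,M,K,R)$.

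Next I would establish this last inequality from the bi-Lipschitz change of variables $y\mapsto\Psi_j^{-1}(y)$, whose constants are controlled by $K$. Since $\rho_{U_j}(\Psi_j^{-1}(y))$ is comparable to $y_1 = \rho_{\bR^n_+}(y)$, the norm $\|w_j\|_{\bL_{p,\theta}(\bR)}$ is comparable to $\|\varphi_j u\|_{\bL_{p,\theta}(\bR)}\le N\|u\|_{\bL_{p,\theta}(\bR)}$; by the chain rule $Dw_j = \big((D(\varphi_j u))\circ\Psi_j^{-1}\big)D\Psi_j^{-1}$ with $D\Psi_j^{-1}$ bounded, so $\|Dw_j\|_{\bL_{p,\theta}(\bR)}\le N\big(\|Du\|_{\bL_{p,\theta}(\bR)} + \|u\|_{\bL_{p,\theta}(\bR)}\big)$; and writing $u_t = D_i g_i$ in $\bR\times\Omega$ with $g_i\in\bL_{p,\theta}(\bR)$ (see the discussion of $\bH_{p,\theta+p}^{-1}$ on page \pageref{page_ref0302_1}) and using $(\varphi_j u)_t = D_i(\varphi_j g_i) - (D_i\varphi_j)g_i$, one transfers this identity through $\Psi_j$ to realize $w_{j,t}$ as a divergence of $\bL_{p,\theta}(\bR)$-functions comparable to $\varphi_j g_i$ plus a remainder lying in $\bL_{p,\theta}(\bR)$ with bounded support; since such functions belong to $\bH_{p,\theta+p}^{-1}(\bR)$, this gives $\|w_{j,t}\|_{\bH_{p,\theta+p}^{-1}(\bR)}\le N\|u\|_{\widetilde{\fH}_{p,\theta}^1(\bR)}$. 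Carrying out these three estimates carefully — in particular choosing a flattening map $\Psi_j$ that preserves the divergence structure of $u_t$ while keeping the weighted bounds — is exactly the content of \cite[Lemmas 2.14 and 2.15]{MR2352844}, which I would invoke rather than repeat the computations. Summing over $j=1,\dots,M$ then yields \eqref{eq1002}.

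The hard part is the treatment of $u_t$. For the full-regularity pieces $u, Du\in\bL_{p,\theta}$ a Lipschitz change of coordinates is routine, but the low-regularity time derivative $u_t\in\bH_{p,\theta+p}^{-1}$ has to be handled through its divergence representation, and one must verify both that the flattening preserves the divergence form and that the resulting $\bL_{p,\theta}$-bounds — including those for the lower-order remainder $(D_i\varphi_j)g_i$ — are stable under the change of variables. This is precisely where the special diffeomorphism introduced in \cite{MR2352844} and the norm equivalence for $\bH_{p,\theta+p}^{-1}$ recorded on page \pageref{page_ref0302_1} are needed.
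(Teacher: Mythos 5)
Your proposal is correct and follows essentially the same route as the paper: a partition of unity plus the boundary-flattening maps $\Psi_j$ to reduce \eqref{eq1002} to the half-space estimate \eqref{eq1001} of Proposition \ref{prop0712_01}, with the weighted change-of-variables estimates (including the divergence representation of $u_t$ in $\bH_{p,\theta+p}^{-1}$) delegated to \cite[Lemmas 2.14 and 2.15]{MR2352844}, exactly as the paper does. Your identification of the $u_t$ term as the delicate point, handled via the special diffeomorphism of \cite{MR2352844}, matches the paper's intent.
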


To prove Proposition \ref{prop0712_01}, we start with the following lemma.

\begin{lemma}
							\label{lem0603_2}
Let $p$, $\theta$, and $\mathsf{s}$ be as in Proposition \ref{prop0712_01}.
For $u$ satisfying $u, D_i u \in \bL_{p, \theta}(\bR)$, $i = 1, \ldots, n$, we have
\begin{multline}
							\label{eq0222_01}
\|u(\cdot, 0, \cdot)\|_{L_p(\bR \times \bR^{n-1})}
+ [u(\cdot,0,\cdot)]_{W_p^{0,\mathsf{s}}(\bR \times \bR^{n-1})}
\\
\le N \left(\|u\|_{\bL_{p, \theta}(\bR)} + \|Du\|_{\bL_{p, \theta}(\bR)}\right),
\end{multline}
where $N = N(n, p, \theta)$.
\end{lemma}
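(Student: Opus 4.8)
The statement to be proved is Lemma \ref{lem0603_2}: a bound on the $L_p$ norm and the purely spatial $W_p^{0,\mathsf s}$ seminorm of the boundary trace $u(\cdot,0,\cdot)$ in terms of $\|u\|_{\bL_{p,\theta}(\bR)}+\|Du\|_{\bL_{p,\theta}(\bR)}$, for functions $u$ on $\bR\times\bR^n_+$ with $u,D_iu\in\bL_{p,\theta}(\bR)$.

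The essential observation is that neither quantity on the left-hand side involves the time variable in any delicate way: for a.e. fixed $t\in\bR$, $u(t,\cdot)\in L_{p,\theta}(\bR^n_+)$ together with $D_1u(t,\cdot),\ldots,D_nu(t,\cdot)\in L_{p,\theta}(\bR^n_+)$ means exactly $u(t,\cdot)\in\widetilde H^1_{p,\theta}(\bR^n_+)$ (here the weight is $x_1^{\theta-n}$ and $\theta\in(n-1,n-1+p)$, so the trace at $x_1=0$ is meaningful). Hence the time variable is a mere parameter, and it suffices to prove the \emph{elliptic}, time-free estimate
\[
\|v(0,\cdot)\|_{L_p(\bR^{n-1})}^p+\int_{\bR^{n-1}}\int_{\bR^{n-1}}\frac{|v(0,x')-v(0,y')|^p}{|x'-y'|^{n-1+\mathsf s p}}\,dx'\,dy'
\le N\Big(\|v\|_{L_{p,\theta}(\bR^n_+)}^p+\|Dv\|_{L_{p,\theta}(\bR^n_+)}^p\Big)
\]
for $v=v(x_1,x')$ smooth up to the boundary, and then integrate the $p$-th powers over $t\in\bR$. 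First I would reduce to $C_0^\infty(\overline{\bR^n_+})$ functions by density and a cutoff in $x$.

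For the elliptic estimate itself I would use the standard representation of a boundary value through a vertical integral: $v(0,x')=-\int_0^\infty D_1v(x_1,x')\,dx_1$ when $v$ has compact support, or more robustly average this over a dyadic scale to avoid integrability issues. Concretely, for the $L_p$ part, write $v(0,x')$ as an average over $x_1\in(0,r)$ of $v(x_1,x')-\int_0^{x_1}D_1v(s,x')\,ds$, choose $r$ optimally (scaling $r\sim$ unit), apply Hölder in $x_1$ against the weight $x_1^{\theta-n}$ (using $\theta-n>-1$ so the weight is integrable near $0$), and then integrate $dx'$; the exponent bookkeeping is exactly Hardy's inequality with $n-1<\theta$, which the paper already invokes elsewhere. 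For the seminorm part, the cleanest route is: for $x'\ne y'$ set $h=|x'-y'|$ and split $v(0,x')-v(0,y')=[v(0,x')-\dashint_{|z'-x'|<h}v(h, z')\,dz']+[\dashint v(h,z')\,dz'-\dashint v(h,z')\,dz' \text{(around }y')]+[\ldots-v(0,y')]$; each bracket is controlled by a vertical integral of $D_1v$ over $(0,h)$ times $x_1^{\theta-n}$, or a horizontal integral of $D'v$ on the slice $x_1\approx h$; then plug into the double integral $\iint |x'-y'|^{-(n-1+\mathsf s p)}\,dx'\,dy'$ and carry out the fractional integration, where the specific value $\mathsf s=(n-1+p-\theta)/p$, equivalently $\mathsf s p=n-1+p-\theta$, is precisely what makes the powers of $h$ cancel. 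Alternatively, I would invoke a known trace theorem for $\widetilde H^1_{p,\theta}(\bR^n_+)$ onto $W_p^{\mathsf s}(\bR^{n-1})$ — this is the elliptic analogue established in \cite{MR2386392} — and simply cite it, since the only thing needed here is the two \emph{spatial} norms.

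The main obstacle I anticipate is getting the weighted Hardy/fractional-integration exponents exactly right so that the constant $N$ depends only on $n,p,\theta$ and the borderline conditions $\theta>n-1$ (integrability of the weight, needed for the $L_p$ bound) and $\theta<n-1+p$ (which forces $\mathsf s>0$, i.e. a genuine fractional space) are both used sharply; a careless split can lose a logarithm at the endpoint. A secondary, purely bookkeeping point is the passage from the elliptic inequality to the parabolic one: one must check the null sets are harmless, i.e. that $u(t,\cdot)\in\widetilde H^1_{p,\theta}(\bR^n_+)$ for a.e. $t$ with $\int_\bR\|u(t,\cdot)\|^p_{\widetilde H^1}\,dt<\infty$, which is immediate from Fubini and the definition of $\bL_{p,\theta}(\bR)$, and then that the measurability of $t\mapsto u(t,0,\cdot)$ is not an issue because we have already restricted to $u\in C(\overline{\bR\times\bR^{n-1}})$ in the ambient statement (Proposition \ref{prop0712_01}), while Lemma \ref{lem0603_2} only needs the a priori inequality for such nice $u$.
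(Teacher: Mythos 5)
Your plan is correct and matches the paper's proof in essentially every respect: the paper also treats $t$ as a mere parameter, proves the purely spatial estimate for each fixed $t$ (noting at the outset that one could simply cite the elliptic trace theorem of \cite{MR2386392}, which is exactly your fallback), and obtains it by averaging in $x_1$ over an appropriate scale and applying Minkowski's and H\"older's inequalities with an auxiliary exponent $\alpha$ chosen strictly inside the admissible range (so no endpoint logarithm is lost), before integrating the $p$-th power in $t$. The only cosmetic difference is in the seminorm term: the paper estimates the tangential differences $\Delta_i^h u(t,0,\cdot)$, $i=2,\ldots,n$, and invokes the equivalence of the Gagliardo seminorm with the directional Besov seminorm $[\,\cdot\,]_{B_{p,p}^{\mathsf{s}}}$, whereas you split $v(0,x')-v(0,y')$ through ball averages at height $h=|x'-y'|$; both reduce to the same weighted Hardy/fractional-integration computation with $\mathsf{s}p=n-1+p-\theta$.
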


\begin{proof}
In fact, one can obtain the lemma immediately by applying the trace theorem \cite[Theorem 2.8]{MR2386392} to $u(t,x)$ for each $t \in \bR$.
Since the trace theorem in \cite{MR2386392} is based on the interpolation theory on Banach spaces (see, for instance, \cite{MR503903}), for the reader's convenience, we here give an elementary and self-contained proof.

It is sufficient to prove the estimate \eqref{eq0222_01} for $u \in C_0^{\infty}(\overline{\bR \times \bR^n_+})$.
We first consider $n \ge 2$.
For $h > 0$ and $i = 2, \ldots, n$, we set
$$
\Delta_i^h u := u(t, x + he_i) - u(t, x),
$$
where $\{e_i\}_{i=1}^n$ is the standard basis of $\bR^n$. 
Observe that for each $t \in \bR$ and $i = 2, \ldots, n$,
\begin{align*}
| \Delta_i^h & u(t, 0, x') | \le \left| \Delta_i^h u(t, x_1, x') \right| + \left| \Delta_i^h u(t, x_1, x') -  \Delta_i^h u(t, 0, x') \right|
\\
&\le \left| \Delta_i^h u(t, x_1, x') \right| + \left|  u(t, x_1, x' + he_i) -  u(t, 0, x' + he_i) \right|
\\
&+ \left| u(t, x_1, x') - u(t, 0, x') \right|.
\end{align*}
By integrating both sides with respect to $x_1 \in (0, h)$ and dividing by $h$, we have
\begin{align*}
\left| \Delta_i^h u(t, 0, x') \right| &\le \frac{1}{h} \int_0^h \int_0^h \left| D_iu(t, x_1, x' + \ell e_i)  \right| \,d\ell \,dx_1
\\
&+ \int_0^h \left| D_1u(t, x_1, x' + he_i ) \right| + \left| D_1u(t, x_1, x')  \right| \,dx_1.
\end{align*}
Take $\alpha \in \bR$ so that $1-\mathsf{s}-1/p < \alpha < 1 - 1/p$. Then by Minkowski's inequality and H\"{o}lder's inequality, we have
$$
\left\| \Delta_i^h u(t, 0, \cdot)  \right\|_{L_p(\bR^{n-1})} \le N \int_0^h \left\| Du(t, x_1, \cdot) \right\|_{L_p(\bR^{n-1})} \,dx_1
$$
$$
\le N \left( \int_0^h \left\| Du(t, x_1, \cdot) \right\|_{L_p(\bR^{n-1})}^p x_1^{p\alpha} \,dx_1  \right)^{1/p} \left( \int_0^h x_1^{-q\alpha} \,dx_1\right)^{1/q},
$$
where $\alpha < 1/q := 1-1/p$.
This gives that
$$
\left\| \Delta_i^h u(t, 0, \cdot)  \right\|_{L_p(\bR^{n-1})}^p \le N h^{p-1-p\alpha} \int_0^h \left\| Du(t, x_1, \cdot) \right\|_{L_p(\bR^{n-1})}^p x_1^{p\alpha} \,dx_1.
$$
Therefore, with the fact that $1-\mathsf{s}-1/p < \alpha$, we obtain
\begin{equation}
							\label{eq0325_01}
\begin{aligned}
&\int_0^{\infty}  \left\| \Delta_i^h u(t, 0, \cdot)  \right\|_p^p h^{-1 - \mathsf{s}p}\,dh 
\\
&\le N \int_0^{\infty} \int_0^h \left\| Du(t, x_1, \cdot) \right\|_p^p x_1^{p\alpha} \,dx_1 h^{-p\alpha + p -\mathsf{s}p -2} \,dh
\\
&= N \int_0^{\infty} \int_{x_1}^{\infty} h^{-p\alpha + p -\mathsf{s}p -2} \, dh \left\| Du(t, x_1, \cdot) \right\|_p^p x_1^{p\alpha} \,dx_1
\\
&\le N \int_0^{\infty} \left\| Du(t, x_1, \cdot) \right\|_p^p x_1^{p -\mathsf{s}p -1} \,dx_1 = N \|Du(t, \cdot)\|_{L_{p, \theta}(\bR^n_+)}^p
\end{aligned}
\end{equation}
for $i = 2, \ldots, n$, where $N=N(n, p, \theta)$ and $\|\cdot\|_p = \|\cdot\|_{L_p(\bR^{n-1})}$.

For the estimate of $\| u(t, 0, \cdot ) \|_{L_p(\bR^{n-1})}$, note that
$$
\left| u(t, 0, x')  \right| \le \left| u(t, x_1, x') - u(t, 0, x')  \right| + \left| u(t, x_1, x')  \right|
$$
and by integrating both sides with respect to $x_1 \in (0, 1)$,
\begin{align*}
&\left| u(t, 0, x') \right|
\\
&\le \int_0^1 \int_0^{x_1} \left| D_1u(t, y_1, x')   \right|\,dy_1 \,dx_1 + \int_0^1 \left| u(t, x_1, x')  \right| \,dx_1
\\
&\le  \int_0^1  \left| D_1u(t, x_1, x')   \right| \,dx_1 + \int_0^1 \left| u(t, x_1, x')  \right| \,dx_1.
\end{align*}
Then, as above, by Minkowski's inequality and H\"older's inequality,
\begin{equation}
							\label{eq0325_02}
\begin{aligned}
&\left\| u(t, 0, \cdot) \right\|_{L_p(\bR^{n-1})} \le \int_0^1 \left\| u(t, x_1, \cdot) \right\|_{W_p^1
} x_1^{1 - \mathsf{s} - 1/p}x_1^{-1 + \mathsf{s} + 1/p} \,dx_1
\\
&\le \left( \int_0^1 \left\| u(t, x_1, \cdot) \right\|_{W_p^1
}^p x_1^{p - \mathsf{s}p - 1} \,dx_1 \right)^{1/p} \left( \int_0^1 x_1^{-1 + q\mathsf{s}} \, dx_1\right)^{1/q}
\\
&\le  N \||u(t,\cdot)| + |Du(t,\cdot)|\|_{L_{p,\theta}(\bR^n_+)},
\end{aligned}
\end{equation}
where $N = N(n, p, \theta)$ and $W_p^1 = W_p^1(\bR^{n-1})$.

Hence, by the estimates \eqref{eq0325_01} and \eqref{eq0325_02} as well as by the equivalence of the two semi-norms $[g]_{W_p^{\mathsf{s}}(\bR^{n-1})}$ and $[g]_{B_{p,p}^{\mathsf{s}}(\bR^{n-1})}$, where
\[
\left[  g \right]_{W_p^{\mathsf{s}}\left( \bR^{n-1} \right)}^p = \int_{\bR^{n-1}}\int_{\bR^{n-1}}  \frac{\left| g(x') - g(y') \right|^p }{ \left| x' - y'  \right|^{d-1 + \mathsf{s}p}} \,dx' \,dy'
\]
and 
\[
\left[ g \right]_{B_{p, p}^{\mathsf{s}}(\bR^{n-1})}^p = \sum_{i = 1}^{n-1}\int_0^{\infty}  \left\| \Delta_i^h g \right\|_{L_p\left( \bR^{n-1} \right)}^p |h|^{-1 - \mathsf{s}p} \,dh,
\]
for each $t \in \bR$, we have
$$
\| u(t,\cdot)\|_{W_p^{\mathsf{s}}(\bR^{n-1})} \le N \left(\|u(t,\cdot)\|_{L_{p,\theta}(\bR^n_+)} + \|Du(t,\cdot)\|_{L_{p,\theta}(\bR^n_+)}\right),
$$
where $N=N(n, p, \theta)$.  It only remains to integrate the $p$-power of both sides of the above inequality with respect to $t$ from $-\infty$ to $\infty$.

Finally, when $n=1$, the inequality \eqref{eq0222_01} directly follows from
$$
\left| u(t, 0)  \right| \le  \int_0^1  \left| Du(t,x)   \right| \,d x + \int_0^1 \left| u(t,x)  \right| \,d x
$$
because the semi-norm on the right-hand side of the inequality disappears.
The lemma is proved.
\end{proof}

In the lemma below we estimate the trace of a function by expressing the function as an integral form.
This type of argument is used in \cite{MR1257932}, where kernels are used for integral representations (see \cite{MR0152793}), to prove the boundedness of trace operators for functions in Sobolev spaces without weights.
Here we use  mollifications instead of kernels in the integral form.
\\

To estimate the seminorm $\left[ u\left( \cdot, 0, \cdot \right) \right]_{W_p^{s/2, 0}(\bR \times \bR^{n-1})}$, we start from an integral representation of $u \in C_0^{\infty}(\overline{\bR \times \bR^n_+})$ with $u_t = D_i g_i$ in $\bR \times \bR^n_+$.
Take $\phi(t, x) := \eta(x_1) \zeta(t, x')$ where $\eta$ and $\zeta$ are non-negative smooth functions such that 
$$
\operatorname{supp}\eta \subset (-1, -1/2), \quad \operatorname{supp}\zeta \subset (0, 1) \times B_1^\prime(0) \subset \bR \times \bR^{n-1},
$$
and $\|\phi\|_{L_1(\bR \times \bR^n)} = 1$.
Fix an $\varepsilon > 0$ and $ \phi_{\varepsilon}(t, x) := \varepsilon^{-n-2} \phi(t\varepsilon^{-2}, x\varepsilon^{-1})$.
Then for 
\[
u^{(\varepsilon)}(t,x) = \int_{\bR \times \bR^n_+} \phi_\varepsilon(t-s,x-y) u(s,y)\, dy \, ds,
\]
we write
$$
u(t, x) = u^{(\varepsilon)}(t, x) - \left( u^{(\varepsilon)}(t, x) - u(t, x) \right)
$$
$$
 = u^{(\varepsilon)}(t, x) - \int_{\bR \times \bR^{n}_+} \left( u(s, y) - u(t, x)\right) \phi_{\varepsilon}(t-s, x-y) dy\,ds
$$
$$
:= u^{(\varepsilon)}(t, x) - v(t, x; \varepsilon).
$$
Let $\gamma(\lambda)$ be a path from $(t, x) \in \bR \times \bR^n_+$ to $(s, y) \in \bR \times \bR^n_+$ defined by
$$
\gamma(\lambda) = \left( (1-\lambda)t + \lambda s, (1-\lambda^{1/2})x + \lambda^{1/2}y \right), \quad \lambda \in [0,1].
$$
Since
$$
u(s, y) - u(t, x) = \int_0^1 (\nabla_{t, x} u)(\gamma(\lambda)) \cdot  \gamma'(\lambda) \, d\lambda,
$$
we see that 
\[
v(t, x;\varepsilon) := v(t, x)
\]
$$
=\int_{\bR \times \bR^{n}_+}\int_0^1 \frac{1}{2} \lambda^{-\frac{1}{2}}\, \nabla u(\gamma(\lambda)) \cdot (y - x) \, d\lambda \,  \phi_{\varepsilon}(t-s, x-y) \,dy\,ds
$$
$$
+ \int_{\bR \times \bR^{n}_+}\int_0^1 u_t(\gamma(\lambda)) (s-t) \, d\lambda \,  \phi_{\varepsilon}(t-s, x-y) \,dy\,ds =:J_1 + J_2,
$$
where, by the change of variables $\gamma(\lambda) \to (l,z)$ and $\varepsilon^2 \lambda \to \lambda$,
\[
J_1 = \int_0^{\varepsilon^2} \frac{\lambda^{-\frac{n}{2} - \frac{3}{2}}}{2}\int_{\bR \times \bR^{n}_+} \nabla u(l, z) \cdot  \frac{z - x}{\sqrt{\lambda}}  \,  \phi \left( \frac{t-l}{\lambda}, \frac{x-z}{\sqrt{\lambda}} \right) dz\,dl \, d\lambda,
\]
\[
J_2 = \int_0^{\varepsilon^2}  \lambda^{-\frac{n}{2}-1} \int_{\bR \times \bR^{n}_+} u_t(l, z) \frac{l-t}{\lambda} \,  \phi \left( \frac{t-l}{\lambda}, \frac{x-z}{\sqrt{\lambda}} \right) dz\,dl \, d\lambda.
\]
For $J_2$, using the fact that $u_t = D_i g_i$ in $\bR \times \bR^n_+$ and the test function
\[
\phi \left(\frac{t - \cdot}{\lambda}, \frac{x - \cdot}{\sqrt{\lambda}}   \right) \in C_0^{\infty}(\bR \times \bR^n_+),
\]
we further have
\[
J_2 = \int_0^{\varepsilon^2}  \lambda^{-\frac{n}{2}-\frac{3}{2}} \int_{\bR \times \bR^{n}_+} \frac{l-t}{\lambda} g(l, z) \cdot  \nabla \phi \left( \frac{t-l}{\lambda}, \frac{x-z}{\sqrt{\lambda}} \right) \,dz\,dl \, d\lambda,
\]
where $g= (g_1,\ldots,g_n)$.
Thus, one can write
\begin{equation}
							\label{eq0326_01}
\begin{aligned}
u(t, 0, x') &= u^{(\varepsilon)}(t, 0, x') - v(t, 0, x')
\\
&= u^{(\varepsilon)}(t,0,x') - J_1(t,0,x') - J_2(t,0,x')
\\
&= u^{(\varepsilon)}(t,0,x') - \sum_{j=1}^3 \int_0^{\varepsilon^2} V_j(t,x',\lambda;\varepsilon)\, d\lambda,
\end{aligned}
\end{equation}
where
\begin{equation}
							\label{eq0329_01}
\begin{aligned}
V_1 &:= \frac{\lambda^{-\frac{n}{2} - \frac{3}{2}}}{2}\int_{\bR \times \bR^{n}_+} D_1 u(l, z) \frac{z_1}{\sqrt{\lambda}}  \,  \phi (\cdot) \, dz\,dl,
\\
V_2 &:= \frac{\lambda^{-\frac{n}{2} - \frac{3}{2}}}{2}\int_{\bR \times \bR^{n}_+} \nabla_{z'} u(l, z) \cdot  \frac{z' - x'}{\sqrt{\lambda}}  \,  \phi(\cdot)\,dz\,dl,
\\
V_3 &:= \lambda^{-\frac{n}{2}-\frac{3}{2}} \int_{\bR \times \bR^{n}_+} \frac{l-t}{\lambda} g(l, z) \cdot  \nabla \phi(\cdot)\,dz\,dl.
\end{aligned}
\end{equation}
Here,
\[
(\phi, \nabla \phi)(\cdot) = (\phi, \nabla \phi) \left( \frac{t-l}{\lambda}, \frac{-z_1}{\sqrt{\lambda}}, \frac{x'-z'}{\sqrt{\lambda}} \right).
\]

The lemma below is a key estimate for our trace theorem.

\begin{lemma}
							\label{lem0712_01}
Let $p$, $\theta$ and $\mathsf{s}$ be as in Proposition \ref{prop0712_01}.
Let $u \in C_0^{\infty}(\overline{\bR \times \bR^n_+})$ satisfy
$u_t = D_i g_i$ in $\bR \times \bR^n_+$, where $g_i \in \bL_{p,\theta}(\bR)$.
Then
\begin{equation}
							\label{eq0405_01}
\begin{aligned}
&\int_0^T \tau^{-1- \mathsf{s}p/2} \| u(\cdot + \tau, 0, \cdot) - u(\cdot,0,\cdot)\|_{L_p(\bR \times \bR^{n-1})}^p \, d \tau
\\
&\le N \left(\frac{T}{\varepsilon^2}\right)^{p-\mathsf{s}p/2} \int_0^{\varepsilon} \|g_i (\cdot,y_1,\cdot)\|_{L_p}^p \, y_1^{\theta - n} \, dy_1
\\
&+ N \int_0^{\sqrt{T}} F(y_1) \, y_1^{\theta - n} \, dy_1
+ N \int_0^{\varepsilon} F(y_1) \, y_1^{\theta - n} \, dy_1
\end{aligned}
\end{equation}
for any $\varepsilon, T \in (0,\infty)$,
where $N = N(n, p, \theta)$ and
$$
F(y_1) = \sum_{i=1}^n \| \left( \left| D_iu \right| +\left| g_i \right| \right) (\cdot,y_1,\cdot)\|_{L_p(\bR \times \bR^{n-1})}^p.
$$
\end{lemma}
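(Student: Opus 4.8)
The plan is to integrate the representation \eqref{eq0326_01} against $\tau^{-1-\mathsf{s}p/2}\,d\tau$ and estimate the time-increment of each of its two pieces separately. Fix $\varepsilon>0$ and write
\[
u(t,0,x')=u^{(\varepsilon)}(t,0,x')-v(t,0,x'),\qquad v(t,0,x'):=\sum_{j=1}^{3}\int_{0}^{\varepsilon^{2}}V_{j}(t,x',\lambda)\,d\lambda .
\]
By the triangle inequality it suffices to bound $\int_{0}^{T}\tau^{-1-\mathsf{s}p/2}\|u^{(\varepsilon)}(\cdot+\tau,0,\cdot)-u^{(\varepsilon)}(\cdot,0,\cdot)\|_{L_{p}(\bR\times\bR^{n-1})}^{p}\,d\tau$ by the first term on the right-hand side of \eqref{eq0405_01}, and $\int_{0}^{T}\tau^{-1-\mathsf{s}p/2}\|v(\cdot+\tau,0,\cdot)-v(\cdot,0,\cdot)\|_{L_{p}(\bR\times\bR^{n-1})}^{p}\,d\tau$ by the last two terms.

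For the $u^{(\varepsilon)}$ piece, the key observation is that since $\operatorname{supp}\eta\subset(-1,-1/2)$ the test function $\phi_{\varepsilon}(t-\cdot,x-\cdot)$ is supported in $\bR\times\bR^n_+$ whenever $x_{1}\ge0$; hence one may integrate by parts in the definition of $u^{(\varepsilon)}_{t}$ and use $u_{t}=D_{i}g_{i}$ to obtain $u^{(\varepsilon)}_{t}(t,x)=\sum_{i}\int_{\bR\times\bR^n_+}(D_{x_{i}}\phi_{\varepsilon})(t-s,x-y)\,g_{i}(s,y)\,ds\,dy$ in $\bR\times\bR^n_+$. Then $u^{(\varepsilon)}(t+\tau,0,x')-u^{(\varepsilon)}(t,0,x')=\int_{0}^{\tau}u^{(\varepsilon)}_{t}(t+r,0,x')\,dr$, and setting $x_{1}=0$ confines $y_{1}$ to $(\varepsilon/2,\varepsilon)$. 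By Minkowski's and Young's (convolution) inequalities, translation invariance, and H\"older's inequality in $y_{1}$ against the weight $y_{1}^{\theta-n}$ — the definition $\mathsf{s}=(n-1+p-\theta)/p$ making the power of $\varepsilon$ come out to $\mathsf{s}-2$ — one arrives at
\[
\|u^{(\varepsilon)}(\cdot+\tau,0,\cdot)-u^{(\varepsilon)}(\cdot,0,\cdot)\|_{L_{p}}^{p}\le N\tau^{p}\varepsilon^{(\mathsf{s}-2)p}\sum_{i}\int_{0}^{\varepsilon}\|g_{i}(\cdot,y_{1},\cdot)\|_{L_{p}}^{p}\,y_{1}^{\theta-n}\,dy_{1}.
\]
Multiplying by $\tau^{-1-\mathsf{s}p/2}$ and integrating over $(0,T)$ — the integral $\int_{0}^{T}\tau^{p-1-\mathsf{s}p/2}\,d\tau$ converging because $\mathsf{s}<2$ — produces the factor $\varepsilon^{(\mathsf{s}-2)p}T^{p-\mathsf{s}p/2}=(T/\varepsilon^{2})^{p-\mathsf{s}p/2}$, which is the first term of \eqref{eq0405_01}.

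For the $v$ piece the core is the pointwise-in-$\lambda$ estimate: for $j=1,2,3$,
\[
\|V_{j}(\cdot,\cdot,\lambda)\|_{L_{p}(\bR\times\bR^{n-1})}\le N\lambda^{\mathsf{s}/2-1}\,G(\lambda)^{1/p},\qquad\|\partial_{t}V_{j}(\cdot,\cdot,\lambda)\|_{L_{p}(\bR\times\bR^{n-1})}\le N\lambda^{\mathsf{s}/2-2}\,G(\lambda)^{1/p},
\]
where $G(\lambda):=\int_{\sqrt\lambda/2}^{\sqrt\lambda}F(y_{1})\,y_{1}^{\theta-n}\,dy_{1}$. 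Indeed, inspecting \eqref{eq0329_01}, for each fixed $z_{1}$ the function $V_{j}(\cdot,\cdot,\lambda)$ is a convolution in $(t,x')$ of the relevant datum ($D_{1}u$, $\nabla_{z'}u$, or $g$) at height $z_{1}$ with a kernel supported in $z_{1}\in(\sqrt\lambda/2,\sqrt\lambda)$ (again by $\operatorname{supp}\eta\subset(-1,-1/2)$) whose $L_{1}(dt\,dx')$-mass there is $\lesssim\lambda^{-1}$, the factors $z_{1}/\sqrt\lambda$, $(z'-x')/\sqrt\lambda$, $(l-t)/\lambda$ being $O(1)$ on the support of $\phi$; Minkowski, Young and H\"older in $z_{1}$ against $y_{1}^{\theta-n}$, together with the formula for $\mathsf{s}$, give the power $\mathsf{s}/2-1$, while differentiating $\phi\big(\frac{t-l}{\lambda},\dots\big)$ in $t$ costs an additional $\lambda^{-1}$. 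Using $V_{j}(t+\tau,x',\lambda)-V_{j}(t,x',\lambda)=\int_{0}^{\tau}\partial_{t}V_{j}(t+r,x',\lambda)\,dr$ together with the triangle inequality gives $\|V_{j}(\cdot+\tau,\cdot,\lambda)-V_{j}(\cdot,\cdot,\lambda)\|_{L_{p}}\le N\min\{\lambda,\tau\}\,\lambda^{\mathsf{s}/2-2}G(\lambda)^{1/p}$, whence $\|v(\cdot+\tau,0,\cdot)-v(\cdot,0,\cdot)\|_{L_{p}}\le N\big(\int_{0}^{\min(\tau,\varepsilon^{2})}\lambda^{\mathsf{s}/2-1}G(\lambda)^{1/p}\,d\lambda+\tau\int_{\min(\tau,\varepsilon^{2})}^{\varepsilon^{2}}\lambda^{\mathsf{s}/2-2}G(\lambda)^{1/p}\,d\lambda\big)$. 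Raising this to the $p$-th power and integrating against $\tau^{-1-\mathsf{s}p/2}\,d\tau$: on $\{\tau\ge\varepsilon^{2}\}$ (nonempty only if $\varepsilon^{2}<T$) use the crude bound $2\|v(\cdot,0,\cdot)\|_{L_{p}}$ together with $\int_{\varepsilon^{2}}^{\infty}\tau^{-1-\mathsf{s}p/2}\,d\tau\lesssim\varepsilon^{-\mathsf{s}p}$ and $\int_{0}^{\varepsilon^{2}}\lambda^{\mathsf{s}/2-1}\,d\lambda\lesssim\varepsilon^{\mathsf{s}}$ to get $\lesssim\int_{0}^{\varepsilon}F\,y_{1}^{\theta-n}\,dy_{1}$; on $\{\tau<\varepsilon^{2}\}$ apply the classical Hardy inequality $\int_{0}^{\infty}\tau^{-1-\mathsf{s}p/2}(\int_{0}^{\tau}h)^{p}\,d\tau\le C\int_{0}^{\infty}h(\lambda)^{p}\lambda^{p-1-\mathsf{s}p/2}\,d\lambda$ (valid since $\mathsf{s}p/2>0$) to the first summand and its dual $\int_{0}^{\infty}\tau^{p-1-\mathsf{s}p/2}(\int_{\tau}^{\infty}h)^{p}\,d\tau\le C\int_{0}^{\infty}h(\lambda)^{p}\lambda^{2p-1-\mathsf{s}p/2}\,d\lambda$ (valid since $p-\mathsf{s}p/2>0$) to the second, with $h$ the integrand above cut off at $\min(\varepsilon^{2},T)$, respectively $\varepsilon^{2}$. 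In both cases the powers of $\lambda$ collapse to $\lambda^{-1}G(\lambda)$, and since a given $y_{1}$ enters $G(\lambda)$ only for $\lambda\in(y_{1}^{2},4y_{1}^{2})$, Fubini turns $\int\lambda^{-1}G(\lambda)\,d\lambda$ into $\lesssim\int_{0}^{\sqrt T}F\,y_{1}^{\theta-n}\,dy_{1}$ (Hardy part) and $\lesssim\int_{0}^{\varepsilon}F\,y_{1}^{\theta-n}\,dy_{1}$ (dual-Hardy part), which are the remaining two terms of \eqref{eq0405_01}.

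I expect the main obstacle to be the pointwise $L_{p}$-bounds on $V_{j}$ and $\partial_{t}V_{j}$ with precisely the exponents $\lambda^{\mathsf{s}/2-1}$ and $\lambda^{\mathsf{s}/2-2}$: these are tuned so that after the two Hardy inequalities the residual weight is exactly $\lambda^{-1}$, the borderline power that is still integrable against the dyadic-in-$\lambda$ localization of $G(\lambda)$; keeping that localization on the annulus $y_{1}\simeq\sqrt\lambda$ rather than on $0<y_{1}<\sqrt\lambda$ is precisely what prevents a logarithm from surviving. One must also justify using $u_{t}=D_{i}g_{i}$ inside $J_{2}$ — legitimate because $\phi\big(\frac{t-\cdot}{\lambda},\frac{x-\cdot}{\sqrt\lambda}\big)\in C_{0}^{\infty}(\bR\times\bR^n_+)$ when $x_{1}\ge0$ — and verify that the $O(1)$ prefactors appearing in $V_{1},V_{2},V_{3}$ are genuinely bounded on the supports of the mollifiers.
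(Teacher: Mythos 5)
Your proposal is correct and follows essentially the same route as the paper: the same representation \eqref{eq0326_01}--\eqref{eq0329_01}, the same treatment of the mollified part via $u_t=D_ig_i$ and $\operatorname{supp}\eta\subset(-1,-1/2)$, and the same small-$\lambda$/large-$\lambda$ dichotomy (size of $V_j$ for $\lambda<\tau$, time derivative $\partial_t V_j$ for $\lambda>\tau$). The only difference is bookkeeping: you absorb the weighted H\"older step in $z_1$ into the clean intermediate bounds $\|V_j\|_{L_p}\le N\lambda^{\mathsf{s}/2-1}G(\lambda)^{1/p}$, $\|\partial_t V_j\|_{L_p}\le N\lambda^{\mathsf{s}/2-2}G(\lambda)^{1/p}$ and then invoke the one-dimensional Hardy and dual Hardy inequalities, whereas the paper carries the $z_1$-integrals along and performs the equivalent computation with an auxiliary exponent and Fubini.
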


\begin{proof}
Throughout the proof, we denote $\|\cdot\|_p = \|\cdot\|_{L_p(\bR \times \bR^{n-1})}$, where $\bR \times \bR^{n-1} = \{(t,x'): t \in \bR, x' \in \bR^{n-1}\}$.
By \eqref{eq0326_01}, for $\tau \in (0,T)$, we have
\[
|u(t+\tau,0,x') - u(t,0,x')| \leq |u^{(\varepsilon)}(t+\tau,0,x') - u^{(\varepsilon)}(t,0,x')|
\]
\[
+ \sum_{j=1}^3 \int_0^{\tau \wedge \varepsilon^2} \left| V_j(t+\tau,x',\lambda;\varepsilon) - V_j(t,x',\lambda;\varepsilon)\right| \, d \lambda
\]
\[
+ \sum_{j=1}^3 \int_{\tau \wedge \varepsilon^2}^{\varepsilon^2} \left| V_j(t+\tau,x',\lambda;\varepsilon) - V_j(t,x',\lambda;\varepsilon)\right| \, d \lambda
\]
\[
=: \Delta_\tau U_1 + \Delta_\tau U_2 + \Delta_\tau U_3.
\]
This gives
\begin{equation}
							\label{eq0712_01}
\| u(\cdot + \tau, 0, \cdot) - u(\cdot,0,\cdot)\|_p \le \| \Delta_\tau U_1\|_p + \| \Delta_\tau U_2\|_{L_p} + \| \Delta_\tau U_3\|_p,
\end{equation}
where $\|\Delta_\tau U_3\|_p = 0$ if $\tau \geq \varepsilon^2$.
Thus, to derive the desired estimate in the lemma, we deal with the terms on the right-hand side of the inequality \eqref{eq0712_01}.

First, for $\Delta_\tau U_1$, we write
$$
\Delta_\tau U_1 = \left| \int_0^{\tau} \frac{\partial u^{(\varepsilon)}}{\partial t}(t + \ell, 0, x') \,d\ell     \right|,
$$
where, by the fact that $u_t = D_i g_i$ in $\bR \times \bR^n_+$ and $\phi_\varepsilon(t,x) = \eta_\varepsilon(x_1) \zeta_\varepsilon(t,x')$ with
\[
\eta_\varepsilon(x_1) = \varepsilon^{-1}\eta(x_1/\varepsilon), \quad
\zeta_\varepsilon(t,x') = \varepsilon^{-n-1}\zeta(t/\varepsilon^2,x'/\varepsilon),
\]
we have
\begin{align*}
\frac{\partial u^{(\varepsilon)}}{\partial t}(t,0,x') &= \int_{\bR \times \bR^n_+} u(s,y) \frac{\partial}{\partial t}\left[ \phi_\varepsilon(t-s,-y_1,x'-y')\right] \, dy \, ds
\\
&= \int_{\bR \times \bR^n_+} g_1(s,y) \eta_\varepsilon'(-y_1) \zeta_\varepsilon(t-s,x'-y') \, dy \, ds
\\
&+ \sum_{i=2}^n \int_{\bR \times \bR^n_+} g_i(s,y) \eta_\varepsilon(-y_1) D_i\zeta_\varepsilon(t-s,x'-y') \, dy \, ds.
\end{align*}
Then by Minkowski's inequality, Young's convolution inequality, H\"older's inequality, and the fact that $\operatorname{supp} \eta \subset (-1,-1/2)$,
\begin{align*}
\|\Delta_\tau U_1 \|_p &\le N \tau \varepsilon^{-2} \int_0^{\infty} \|g_i(\cdot, y_1, \cdot)\|_p  \left(  \left| \eta \right| + \left| \eta' \right|  \right) \left( -\frac{y_1}{\varepsilon} \right) dy_1 
\\
&\le N \tau \varepsilon^{-2} \int_{\varepsilon/2}^{\varepsilon}\|g_i(\cdot, y_1, \cdot)\|_p \, y_1^{\kappa}y_1^{-\kappa} \,dy_1
\\
&\le N \tau \varepsilon^{-2} \left( \int_{\varepsilon/2}^{\varepsilon} y_1^{-q\kappa} \, dy_1\right)^{1/q} \left( \int_{\varepsilon/2}^{\varepsilon}  \|g_i(\cdot, y_1, \cdot)\|_p^p \, y_1^{p\kappa}      \, dy_1\right)^{1/p} 
\\
&\le N \tau \varepsilon^{-1 -\kappa - 1/p} \left( \int_{\varepsilon/2}^{\varepsilon}  \|g_i(\cdot, y_1, \cdot)\|_p^p \, y_1^{p\kappa}      \, dy_1\right)^{1/p},
\end{align*}
where $1/p + 1/q = 1$ and $\kappa = 1 - \mathsf{s} - 1/p$.
Hence,
\begin{equation}
							\label{eq0401_01}
\begin{aligned}
&\int_0^T \tau^{-1-\mathsf{s}p/2} \|\Delta_\tau U_1\|_{L_p }^p \, d \tau
\\
&\le N \varepsilon^{-p - p\kappa -1} \int_0^T \tau^{-1-\mathsf{s}p/2 + p} \int_{0}^{\varepsilon}  \|g_i(\cdot, y_1, \cdot)\|_p^p \, y_1^{p\kappa}      \, dy_1 \,d\tau
\\
&\le N T^{p-\mathsf{s}p/2}\varepsilon^{-p - p\kappa -1} \int_{0}^{\varepsilon}  \|g_i(\cdot, y_1, \cdot)\|_p^p \, y_1^{p\kappa}\,dy_1
\\
&= N \left( \frac{T}{\varepsilon^2}\right)^{p -\mathsf{s}p/2} \int_{0}^{\varepsilon}  \|g_i(\cdot, y_1, \cdot)\|_p^p \, y_1^{\theta - n}\,dy_1
\end{aligned}
\end{equation}
where $N = N(n, p, \mathsf{s}) = N(n, p, \theta)$.

We now estimate $\Delta_\tau U_2$.
Take $\alpha \in \bR$ such that
\[
\frac{5}{q} - \mathsf{s} < \alpha < \frac{5}{q}, \quad \text{where} \quad 1/p+1/q = 1.
\]
From the definition of $V_1$ in \eqref{eq0329_01}, Minkowski's inequality, Young's convolution inequality, and H\"older's inequality (also recall $\phi(t,x) = \eta(x_1)\zeta(t,x')$ and $\operatorname{supp}\eta \subset (-1,-1/2)$), it follows that
\[
\left\| \int_0^\tau |V_1(\cdot,\cdot,\lambda;\varepsilon)|\, d \lambda\right\|_p
\]
\[
\leq N \int_0^\tau \lambda^{-1} \int_{\frac{\sqrt{\lambda}}{2}}^{\sqrt{\lambda}} \frac{y_1}{\sqrt{\lambda}} \eta\left(-\frac{y_1}{\sqrt{\lambda}} \right) \|Du(\cdot,y_1,\cdot)\|_p \, y_1^\alpha y_1^{-\alpha} \, dy_1 \, d\lambda
\]
\[
\leq N \left(\int_0^\tau \int_{\frac{\sqrt{\lambda}}{2}}^{\sqrt{\lambda}} \lambda y_1^{-q \alpha} \, dy_1 \, d\lambda \right)^{1/q} \left(\int_0^\tau \int_{\frac{\sqrt{\lambda}}{2}}^{\sqrt{\lambda}} \lambda^{1-2p} \|Du(\cdot,y_1,\cdot)\|_p^p \, y_1^{p \alpha} \, dy_1 \, d\lambda \right)^{1/p}
\]
\[
= N \tau^{\frac{5}{2q}-\frac{\alpha}{2}}\left(\int_0^\tau \int_{\frac{\sqrt{\lambda}}{2}}^{\sqrt{\lambda}} \lambda^{1-2p} \|Du(\cdot,y_1,\cdot)\|_p^p \, y_1^{p \alpha} \, dy_1 \, d\lambda \right)^{1/p},
\]
where we used $1/q + (1-2p)/p = -1$, $5 - q\alpha > 0$, and the fact that
\[
\frac{y_1}{\sqrt{\lambda}} \eta\left(-\frac{y_1}{\sqrt{\lambda}} \right)
\]
is bounded by a constant depending on the choice of $\eta$.
Similar calculations for $V_2$ and $V_3$ with the above estimate for $V_1$ show that
\[
\|\Delta_\tau U_2\|_p \leq \sum_{j=1}^3\left\|\int_0^\tau |V_j(\cdot+\tau,\cdot,\lambda;\varepsilon) - V_j(\cdot,\cdot,\lambda;\varepsilon)| \, d\lambda \right\|_p
\]
\[
\leq 2 \sum_{j=1}^3\left\| \int_0^\tau |V_j(\cdot,\cdot,\lambda;\varepsilon)| \, d\lambda\right\|_p
\]
\[
\leq N \tau^{\frac{5}{2q}-\frac{\alpha}{2}}\left(\int_0^\tau \int_{\frac{\sqrt{\lambda}}{2}}^{\sqrt{\lambda}} \lambda^{1-2p} \|G(\cdot,y_1,\cdot)\|_p^p \, y_1^{p \alpha} \, dy_1 \, d\lambda \right)^{1/p},
\]
where $G(t, x) = \left| Du(t, x)\right| + \left| g(t, x)\right|$ and, by changing the order of the integrations, the last double integral is not greater than
\[
\int_0^{\sqrt{\tau}}  \int_{y_1^2}^{4y_1^2} \lambda^{1-2p} \, d\lambda \|G(\cdot,y_1,\cdot)\|_p^p \, y_1^{p \alpha} \, dy_1
\]
\[
\leq N \int_0^{\sqrt{\tau}} \|G(\cdot,y_1,\cdot)\|_p^p \, y_1^{p \alpha - 4p+4} \, dy_1.
\]
Therefore,
\begin{equation}
							\label{eq0401_02}
\begin{aligned}
&\int_0^T \tau^{-1-\mathsf{s}p/2} \|\Delta_\tau U_2\|_{L_p}^p \,d\tau
\\
&\leq N\int_0^T \tau^{\frac{5p}{2} - \frac{p\alpha}{2}-\frac{\mathsf{s}p}{2} - \frac{7}{2}} \int_0^{\sqrt{\tau}} \|G(\cdot,y_1,\cdot)\|_p^p \, y_1^{p \alpha - 4p+4} \, dy_1 \, d\tau
\\
&\leq N \int_0^{\sqrt{T}} \|G(\cdot,y_1,\cdot)\|_p^p \, y_1^{p \alpha - 4p+4}  \int_{y_1^2}^\infty \tau^{\frac{5p}{2} - \frac{p\alpha}{2}-\frac{\mathsf{s}p}{2} - \frac{7}{2}} \, d \tau \, dy_1
\\
&= N \int_0^{\sqrt{T}} \|G\left( \cdot, y_1, \cdot \right)\|_p^p \, y_1^{\theta - n} \, dy_1,
\end{aligned}
\end{equation}
where $N = N(n,p,\theta)$, and the last equality is due to $5/q - \mathsf{s} < \alpha$ and $p-\mathsf{s}p-1 = \theta - n$.

Finally, to estimate $\Delta_\tau U_3$, which is only needed when $\tau < \varepsilon^2$, 
we write
\[
V_j(t+\tau,x',\lambda;\varepsilon) - V_j(t,x',\lambda;\varepsilon) = \int_0^\tau \frac{\partial V_j}{\partial t}(t+\ell,x',\lambda;\varepsilon) \, d\ell.
\]
Note that
\[
\frac{\partial V_1}{\partial t}(t,x',\lambda;\varepsilon) = \frac{\lambda^{-\frac{n}{2}-\frac{5}{2}}}{2} \int_0^\infty \frac{z_1}{\sqrt{\lambda}} \eta\left(-\frac{z_1}{\sqrt{\lambda}}\right) D_1u(\cdot,z_1,\cdot) * \zeta_t\left(\frac{\cdot}{\lambda},\frac{\cdot}{\sqrt{\lambda}}\right) \, dz_1,
\]
where, here and below,
$v*w$ indicates the convolution of $v$ and $w$ as functions on $\bR \times \bR^{n-1} = \{(t,x'): t \in \bR, x' \in \bR^{n-1}\}$,
\[
\frac{\partial V_2}{\partial t} = \frac{\lambda^{-\frac{n}{2}-\frac{5}{2}}}{2} \sum_{i=2}^n \int_0^\infty \eta\left(-\frac{z_1}{\sqrt{\lambda}}\right) D_iu(\cdot,z_1,\cdot) * \tilde{\zeta}_i \left(\frac{\cdot}{\lambda},\frac{\cdot}{\sqrt{\lambda}}\right) \, dz_1,
\]
where $\tilde{\zeta}_i(t,x) = - x_i \zeta_t(t,x')$, and
\begin{align*}
\frac{\partial V_3}{\partial t} & = \lambda^{-\frac{n}{2}-\frac{5}{2}} \int_0^\infty \eta'\left(-\frac{z_1}{\sqrt{\lambda}}\right) g_1(\cdot,z_1,\cdot) * \hat{\zeta}\left(\frac{\cdot}{\lambda},\frac{\cdot}{\sqrt{\lambda}}\right) \, dz_1
\\
&+ \lambda^{-\frac{n}{2}-\frac{5}{2}} \sum_{i=2}^n \int_0^\infty \eta\left(-\frac{z_1}{\sqrt{\lambda}}\right) g_i(\cdot,z_1,\cdot) * \check{\zeta}_i\left(\frac{\cdot}{\lambda},\frac{\cdot}{\sqrt{\lambda}}\right) \, dz_1,
\end{align*}
where $\hat{\zeta}(t,x') = - \zeta(t,x') -t\zeta_t(t,x')$ and $\check{\zeta}_i(t,x') = - D_i\zeta(t,x') - t D_i\zeta_t(t,x')$.
Then by Minkowski's inequality
\[
\|\Delta_\tau U_3\|_p \leq \sum_{j=1}^3 \int_\tau^{\varepsilon^2} \int_0^\tau \left\| \frac{\partial V_j}{\partial t}(\cdot+\ell,\cdot,\lambda;\varepsilon)  \right\|_p \, d\ell \, d\lambda =: \sum_{j=1}^3 I_j,
\]
where, for instance, by Young's convolution inequality and H\"older's inequality
\[
\left\| \frac{\partial V_1}{\partial t}(\cdot+\ell,\cdot,\lambda;\varepsilon)  \right\|_p \leq N \lambda^{-2} \int_0^\infty \frac{z_1}{\sqrt{\lambda}} \eta\left(-\frac{z_1}{\sqrt{\lambda}}\right) \|D_1u(\cdot,z_1,\cdot)\|_p \, dz_1
\]
\[
\leq N \lambda^{-2} \int_{\frac{\sqrt{\lambda}}{2}}^{\sqrt{\lambda}} \|D_1u(\cdot,z_1,\cdot)\|_p \, dz_1.
\]
Thus,
\[
I_1 \leq N \tau \int_\tau^{\varepsilon^2} \int_{\frac{\sqrt{\lambda}}{2}}^{\sqrt{\lambda}}  \lambda^{-2} \|D_1u(\cdot,z_1,\cdot)\|_p \, dz_1 \, d\lambda
\]
\[
= N \tau \int_\tau^{\varepsilon^2} \int_{\frac{\sqrt{\lambda}}{2}}^{\sqrt{\lambda}}  \lambda^{-\frac{1}{2}-\frac{1}{q}} \, z_1^{-\frac{1}{q}} \, \lambda^{-\frac{1}{2}-\frac{1}{p}} \|D_1u(\cdot,z_1,\cdot)\|_p \, z_1^{\frac{1}{q}} \, dz_1 \, d\lambda
\]
\[
\leq N \tau \left( \iint \ldots \right)^{\frac{1}{q}} \left(\int_\tau^{\varepsilon^2} \int_{\frac{\sqrt{\lambda}}{2}}^{\sqrt{\lambda}} \lambda^{-\frac{p}{2}-1} \|D_1u(\cdot,z_1,\cdot)\|_p^p \, z_1^{\frac{p}{q}} \, dz_1 \, d\lambda\right)^{\frac{1}{p}},
\]
where
\[
\iint \ldots = \int_\tau^{\varepsilon^2} \int_{\frac{\sqrt{\lambda}}{2}}^{\sqrt{\lambda}} \lambda^{-\frac{q}{2}-1} z_1^{-1} \, dz_1 \, d\lambda \leq N \tau^{-\frac{q}{2}}.
\]
The above estimate for $I_1$ along with similar calculations for $I_2$ and $I_3$ gives
\begin{equation}
							\label{eq0401_03}
\begin{aligned}
&\int_0^T \tau^{-1-\frac{\mathsf{s}p}{2}} \|\Delta_\tau U_3\|_p^p \, d\tau
\\
&\leq N \int_0^{\varepsilon^3} \tau^{\frac{p}{2}-1-\frac{\mathsf{s}p}{2}}\int_\tau^{\varepsilon^2} \int_{\frac{\sqrt{\lambda}}{2}}^{\sqrt{\lambda}} \lambda^{-\frac{p}{2}-1} \|G(\cdot,z_1,\cdot)\|_p^p \, z_1^{\frac{p}{q}} \, dz_1 \, d\lambda \, d\tau
\\
&= N \int_0^{\varepsilon^2} \int_0^\lambda \tau^{\frac{p}{2}-1-\frac{\mathsf{s}p}{2}} \, d\tau \int_{\frac{\sqrt{\lambda}}{2}}^{\sqrt{\lambda}} \lambda^{-\frac{p}{2}-1} \|G(\cdot,z_1,\cdot)\|_p^p \, z_1^{\frac{p}{q}} \, dz_1\, d\lambda
\\
&= N \int_0^{\varepsilon^2} \int_{\frac{\sqrt{\lambda}}{2}}^{\sqrt{\lambda}} \lambda^{-\frac{\mathsf{s}p}{2}-1} \|G(\cdot,z_1,\cdot)\|_p^p \, z_1^{\frac{p}{q}} \, dz_1\, d\lambda
\\
&\leq N \int_0^\varepsilon \|G(\cdot,z_1,\cdot)\|_p^p \, z_1^{\theta - n} \, dz_1,
\end{aligned}
\end{equation}
where, as above, $G = |Du| + |g|$ and we used the fact that $s \in (0,1)$.

From the estimates \eqref{eq0401_01}, \eqref{eq0401_02}, \eqref{eq0401_03} with \eqref{eq0712_01}, we see that the inequality \eqref{eq0405_01} holds.
The lemma is proved.
\end{proof}

\begin{proof}[Proof of Proposition \ref{prop0712_01}]
By Lemma \ref{lem0901}, it is enough to obtain \eqref{eq1001} for $u$ as in Lemma \ref{lem0712_01}. That is, we assume that $u \in C_0^\infty(\overline{\bR \times \bR^n_+})$ with $u_t = D_ig_i$ in $\bR \times \bR^n_+$, $g_i \in \bL_{p, \theta}(\bR)$.  
From Lemma \ref{lem0603_2}, we have
$$
\|u(\cdot, 0,\cdot)\|_{L_p(\bR \times \bR^{n-1})} + \|u(\cdot, 0,\cdot)\|_{W_p^{0,\mathsf{s}}(\bR \times \bR^{n-1})}
$$
$$
\le N \left(\|u\|_{\bL_{p, \theta}(\bR)} + \|Du\|_{\bL_{p, \theta}(\bR)}\right),
$$
where $N=N(n, p, \theta)$. 
By taking $\varepsilon = T^{1/2}$ in Lemma \ref{lem0712_01} and letting $T \to \infty$, we obtain (also recall \eqref{eq0401_04})
$$
\|u(\cdot, 0,\cdot)\|_{W_p^{\mathsf{s}/2,0}(\bR \times \bR^{n-1})} \le N \left(\|Du\|_{\bL_{p, \theta}(\bR)} + \|g_i\|_{\bL_{p, \theta}(\bR)}\right),
$$
where $N=N(n, p, \theta)$.
By combining above two inequalities we arrive at \eqref{eq1001}.
The proposition is proved.
\end{proof}

\begin{remark}
							\label{rem0426_1}

In Proposition \ref{prop0712_01}, we have $u_t \in \bH_{p, \theta + p}^{-1}(\bR)$, which means that $u_t$ may not be a function.
Nevertheless, via an integral representation of $u$, we make use of the fact that $u_t = D_ig_i$ in $\bR \times \Omega$ in `distribution' sense for some $g_i \in \bL_{p, \theta}(\bR)$, $i = 1. \ldots, n$.
On the other hand, if $u \in \widetilde{\fH}_{p, \theta}^{2}(\bR)$ so that $u_t \in \bL_{p,\theta+p}(\bR)$, one can give a considerably simple proof of Proposition \ref{prop0712_01} (in fact, Lemma \ref{lem0712_01}), without using any integral representations,  by proceeding as in the proof of Lemma \ref{lem0603_2} with $u_t$ in place of $D_1 u$.
\end{remark}

\section{Extension operator}
	\label{exten}
In this section, except Corollary \ref{extension_lip}, we set $\Omega = \bR^n_+$.
Recall that, in this case, for instance, by $\bL_{p,\theta}(\bR)$ we mean
\[
\|u\|_{\bL_{p,\theta}(\bR)}^p = \int_{\bR} \|u(t,\cdot)\|_{L_{p,\theta}(\bR^n_+)}^p \, dt = \int_\bR \int_{\bR^n_+} |u(t,x)|^p x_1^{\theta -n} \, dx \, dt.
\]

\begin{proposition}
							\label{prop0715_01}
Let $1 < p < \infty$, $n-1 < \theta < n-1+p$, and $\mathsf{s} =\left( n-1+p-\theta \right)/p$.
Then for $g \in W_p^{\mathsf{s}/2,\mathsf{s}}(\bR \times \bR^{n-1})$ and $\gamma \in \bR$, there exists $u \in \widetilde{\fH}_{p,\theta}^\gamma (\bR)$ such that
$$
\cT u = g,
$$
where $\cT$ is the trace operator from $\widetilde{\fH}_{p,\theta}^1 (\bR)$ to $W_p^{\mathsf{s}/2, \mathsf{s}}(\bR \times \bR^{n-1})$ in Proposition \ref{prop0712_01}.
Moreover,
\begin{equation}
	\label{eq0718_01}
\|u\|_{\widetilde{\fH}_{p,\theta}^\gamma (\bR)} \le N\|g\|_{W_p^{\mathsf{s}/2,\mathsf{s}}(\bR \times \bR^{n-1})},
\end{equation}
where $N=N(n, p, \theta, \gamma)$.
\end{proposition}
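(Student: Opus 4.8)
The plan is to realise the extension as $\cS g=\eta(x_1)\,u$, where $u$ is the bounded solution of the heat equation in the half-space with lateral data $g$ and $\eta\in C^\infty([0,\infty))$ is a cut-off with $\eta\equiv1$ on $[0,1]$ and $\eta\equiv0$ on $[2,\infty)$; the cut-off only serves to make $\cS g$ decay as $x_1\to\infty$, and its effect on all the estimates below is confined to $\{1<x_1<2\}$, where the weight $x_1^{\theta-n}$ is comparable to $1$ and $u$ together with all its derivatives is bounded in $L_p(\bR\times\bR^{n-1})$ by $\|g\|_{L_p}$, so these terms are harmless and I suppress them. Here
$$
u(t,x_1,x')=\int_{\bR\times\bR^{n-1}}\Gamma(t-s,x_1,x'-y')\,g(s,y')\,dy'\,ds,
$$
with $\Gamma$ the Poisson-type kernel for $\partial_t-\Delta$ on $\{x_1>0\}$, i.e.\ $\widehat\Gamma(\tau,x_1,\xi')=\exp\!\big(-x_1\sqrt{i\tau+|\xi'|^2}\,\big)$ on the Fourier side in $(t,x')$, equivalently $u_t-\Delta u=0$ in $\bR\times\bR^n_+$ and $u=g$ on $\{x_1=0\}$. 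The only properties of $\Gamma$ used are: it is smooth and nonnegative for $x_1>0$; it is parabolically self-similar, $\Gamma(t,x_1,x')=x_1^{-(n+1)}\Gamma(t/x_1^2,1,x'/x_1)$; it is an approximate identity in $(t,x')$ as $x_1\downarrow0$, with $\int_{\bR\times\bR^{n-1}}\Gamma(t,x_1,x')\,dx'\,dt=1$ for every $x_1>0$; and $\Gamma$ and all its derivatives obey Gaussian-type decay bounds, so that $D_{x'}^\beta\partial_{x_1}^j\Gamma$ with $|\beta|+j\ge1$ has vanishing moment and scales like $x_1^{-|\beta|-j}$ times a $\Gamma$-type kernel at scale $x_1$.

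The identification $\cT(\cS g)=g$ is obtained by approximation. By mollification, the $C^\infty$ functions that are bounded with all derivatives bounded are dense in $W_p^{\mathsf{s}/2,\mathsf{s}}(\bR\times\bR^{n-1})$; for such $g$, $\cS g$ is continuous up to $\{x_1=0\}$ with value $g$ there, so $\cT(\cS g)=g$. For general $g$, take such $g_k\to g$ in $W_p^{\mathsf{s}/2,\mathsf{s}}$; the estimates below applied to $g_k-g_\ell$ show $\{\cS g_k\}$ is Cauchy in $\widetilde{\fH}_{p,\theta}^\gamma(\bR)$, hence converges there to $\cS g$, and in particular in $\widetilde{\fH}_{p,\theta}^1(\bR)$ (using $\widetilde{\fH}_{p,\theta}^\gamma(\bR)\subset\widetilde{\fH}_{p,\theta}^1(\bR)$ when $\gamma\ge1$, and $\widetilde{\fH}_{p,\theta}^1(\bR)\subset\widetilde{\fH}_{p,\theta}^\gamma(\bR)$ when $\gamma\le1$, both via Proposition \ref{propertyweighted}(2)); by the boundedness of $\cT$ on $\widetilde{\fH}_{p,\theta}^1(\bR)$ from Proposition \ref{prop0712_01}, $g_k=\cT(\cS g_k)\to\cT(\cS g)$ in $W_p^{\mathsf{s}/2,\mathsf{s}}$, so $\cT(\cS g)=g$. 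Since $g\mapsto\eta u$ is manifestly linear, this also makes $\cS$ an operator.

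The heart of the matter is the case $\gamma=1$, namely $\|u\|_{\bL_{p,\theta}(\bR)}+\|Du\|_{\bL_{p,\theta}(\bR)}\le N\|g\|_{W_p^{\mathsf{s}/2,\mathsf{s}}}$; granting it, $u_t=\Delta u=D_i(D_iu)$ with $D_iu\in\bL_{p,\theta}(\bR)$ forces $u_t\in\bH_{p,\theta+p}^{-1}(\bR)$ with the same bound, so $u\in\widetilde{\fH}_{p,\theta}^1(\bR)$ and \eqref{eq0718_01} holds for $\gamma=1$, hence also for $\gamma\le1$. With the normalization $\theta-n=p(1-\mathsf{s})-1$, the weight $x_1^{\theta-n}\,dx_1$ is exactly the one for which the relevant integral reproduces the parabolic Slobodeckij norm (this is the heat-extension, or Littlewood--Paley, characterization of $W_p^{\mathsf{s}/2,\mathsf{s}}=B_{p,p}^{\mathsf{s}}$ for $0<\mathsf{s}<1$). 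Concretely, writing $D_{x_1}u(t,x_1,\cdot)=(D_{x_1}\Gamma)(\cdot,x_1,\cdot)\ast g$ and using $\int D_{x_1}\Gamma(t,x_1,x')\,dx'\,dt=0$ to subtract $g(t,x')$ from $g$ inside the convolution, one bounds $\|D_{x_1}u(t,x_1,\cdot)\|_{L_p}$ by $x_1^{-1}\mathcal{A}(t,x_1)$, where $\mathcal{A}(t,x_1)$ is an average of $\|g(\cdot+\sigma,\cdot+z')-g\|_{L_p}$ over $|\sigma|\lesssim x_1^2$, $|z'|\lesssim x_1$; raising to the $p$-th power, integrating against $x_1^{\theta-n}\,dx_1$, changing variables, and applying Minkowski's, Hölder's and Hardy's inequalities reproduces precisely $[g]_{W_p^{\mathsf{s}/2,0}}^p+[g]_{W_p^{0,\mathsf{s}}}^p$. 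The tangential derivatives $D_{x'}u$ are treated the same way, and $u$ itself via $\|u\|_{L_p}\le\|g\|_{L_p}+\|u-g\|_{L_p}$ with $u-g=\int_0^{x_1}D_{y_1}u\,dy_1$ and the bound just obtained. I expect this step --- turning the weighted integrals of $\|D\Gamma(\cdot,x_1,\cdot)\ast g\|_{L_p}$ into the Slobodeckij seminorms with exactly the right exponents --- to be the main obstacle; it is the reverse of the computation in Lemma \ref{lem0712_01}.

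For $\gamma>1$ the proof is completed by bootstrapping. Since $u$ is smooth and caloric in $\{x_1>0\}$ with $u_t=\Delta u$, it follows from the definition of $\widetilde{\fH}_{p,\theta}^\gamma$ together with the identity $H_{p,\theta}^{k}=\{v:\rho^{|\alpha|}D^\alpha v\in L_{p,\theta}(\bR^n_+),\,0\le|\alpha|\le k\}$ of Proposition \ref{propertyweighted}(4) that it suffices to bound $\|\rho^{|\alpha|}D_x^\alpha u\|_{\bL_{p,\theta}(\bR)}$ for $|\alpha|\le\gamma$. Because $D_x^\alpha\Gamma$ has vanishing moment and scales like $x_1^{-|\alpha|}$ times a $\Gamma$-type kernel at scale $x_1$, the argument of the previous paragraph gives $\|D_x^\alpha u(t,x_1,\cdot)\|_{L_p}\le N x_1^{-|\alpha|}\mathcal{A}(t,x_1)$, so $\|\rho^{|\alpha|}D_x^\alpha u(t,x_1,\cdot)\|_{L_p}\le N\mathcal{A}(t,x_1)$; since $\rho\simeq x_1\lesssim1$ on the support of $\cS g$, integrating against $x_1^{\theta-n}\,dx_1$ is dominated by the $\gamma=1$ computation, and non-integer $\gamma$ is covered by interpolating through Proposition \ref{propertyweighted}(5). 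Collecting these estimates and using the definition of $\widetilde{\fH}_{p,\theta}^\gamma$ yields \eqref{eq0718_01}, with $N$ depending on $\gamma$ only through the number of derivatives involved; the passage from $\Omega=\bR^n_+$ to a bounded Lipschitz domain is then carried out separately in Corollary \ref{extension_lip}.
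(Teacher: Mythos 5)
Your proposal follows essentially the same route as the paper: extend $g$ by the caloric Poisson kernel for the half-space, multiply by a cut-off in $x_1$, and use the vanishing moments and parabolic scaling of the kernel's derivatives together with Minkowski's and H\"older's inequalities to convert the weighted norms of $D^\alpha u$ (and of $u_t$ via $u_t=\Delta u$) into the seminorms $[g]_{W_p^{\mathsf{s}/2,0}}+[g]_{W_p^{0,\mathsf{s}}}$, which is exactly the content of Lemma \ref{lem0603_03} and its use in the paper's proof of Proposition \ref{prop0715_01}. The only cosmetic difference is that for non-integer $\gamma$ the paper simply proves the estimate in $\widetilde{\fH}^k_{p,\theta}(\bR)$ for an integer $k\ge\max\{\gamma,2\}$ and then uses monotonicity in $\gamma$ (Proposition \ref{propertyweighted}(2)), rather than any interpolation through Proposition \ref{propertyweighted}(5).
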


Before we present the proof of Proposition \ref{prop0715_01}, let us recall that, for a sufficiently smooth $g(t,x')$, a solution $u$ to the problem $u_t = \Delta u$ in $\bR \times \bR^n_+$ with the lateral boundary condition $u(t,0,x') = g(t,x')$ on $\bR \times \bR^{n-1}$ is given by
\begin{equation}
							\label{eq0406_01}
\begin{aligned}
u(t,x_1,x') &=[g*p(\cdot,x_1,\cdot)](t,x')
\\
&=\int_{-\infty}^{\infty}\int_{\bR^{n-1}}p(t-s,x_1,x'-y')g(s,y') \, dy' \, ds,
\end{aligned}
\end{equation}
where
$$
p(t,x_1,x')=1_{t>0}
\frac{1}{(4\pi t)^{n/2}}\frac{x_1}{t}e^{-\frac{|x|^2}{4t}}
$$
for $(t,x) \in \bR \times \bR^n_+$. Note that $p$ is infinitely differentiable in $\bR \times \bR^n_+$,
\begin{equation}
          \label{eqn 160112.8}
\int_{\bR} \int_{\bR^{n-1}}p(t,x_1,x') \, dx' \, dt=1, \quad  \quad p_t=\Delta p,
\end{equation}
and, for any multi-index $\alpha=(\alpha_1,\cdots,\alpha_n)$ with respect to the spatial variables, $|\alpha|\geq 1$,
\begin{equation}
          \label{eqn 114.6}
\int_{\bR} \int_{\bR^{n-1}}D^{\alpha}p(t,x_1,x') \, dx' \, dt=0.
\end{equation}
Indeed, since $D^{\alpha}p(t,x_1,x')$ is integrable on $\{(t,x') \in \bR \times \bR^{n-1}\}$ for any $\alpha$ (see (\ref{e1}) and (\ref{e2})), one can check easily that \eqref{eqn 114.6} holds when $\alpha = (0, \alpha_2,\ldots, \alpha_n)$ or the integrand is $p_t$.
For $p_{x_1}$, a direct calculation of $p_{x_1}$ gives \eqref{eqn 114.6}.
Then, the remaining cases follow from the relation $p_t = \Delta p$ and the aforementioned cases.
Also note that $u$ is infinitely differentiable in $\bR \times \bR^n_+$ and $u(t,x)=0$ for $t\leq 0$ if $g(t,x')=0$ for $t\leq 0$.

\begin{lemma}
							\label{lem0603_03}
Let $p$, $\theta$ and $\mathsf{s}$ be as in Proposition \ref{prop0715_01}.
If $g \in C_0^\infty(\bR \times \bR^{n-1})$ and $u$ is defined as in \eqref{eq0406_01}, then for any $m=0,1,2,\ldots$ and  multi-index $\alpha$ with respect to the spatial variables (i.e., $D^\alpha = D_x^{\alpha}$), we have
\begin{equation}
							\label{eq0409_01}
\| D^{\alpha}u\|_{\bL_{p,\theta + mp}(\bR)}\leq N \left( [ g ]_{W^{\mathsf{s}/2, 0}(\bR \times \bR^{n-1})} +  [ g ]_{W^{0, \mathsf{s}}(\bR \times \bR^{n-1})}\right)
\end{equation}
for $|\alpha| = m+1$, and
$$
\|D^{\alpha}u_t\|_{\bL_{p,\theta + (m+1)p }(\bR )}\leq N \left( [g]_{W^{\mathsf{s}/2, 0}(\bR \times \bR^{n-1})} + [g]_{W^{0, \mathsf{s}}(\bR \times \bR^{n-1})}\right)
$$
for $|\alpha| = m$,
where $N=N(n, p, \theta, m)$.
\end{lemma}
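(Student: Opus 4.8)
The plan is to read off \eqref{eq0409_01} from the representation \eqref{eq0406_01}, using the cancellation \eqref{eqn 114.6} and the parabolic scaling of $p$, and then to deduce the bound on $D^{\alpha}u_t$ from $u_t=\Delta u$. First I fix $\alpha$ with $|\alpha|=m+1\ge 1$ and write, from \eqref{eq0406_01} (the $x_1$-derivative hitting $p$ as a parameter, the $x'$-derivative transferred to $p$ under the convolution in $(t,x')$),
$$
D^{\alpha}u(t,x_1,x')=\int_{\bR}\int_{\bR^{n-1}}D^{\alpha}p(\tau,x_1,z')\,g(t-\tau,x'-z')\,dz'\,d\tau .
$$
Since $|\alpha|\ge 1$, \eqref{eqn 114.6} lets me subtract $g(t,x')$ inside the integral, and I split
$g(t-\tau,x'-z')-g(t,x')=\bigl(g(t-\tau,x'-z')-g(t-\tau,x')\bigr)+\bigl(g(t-\tau,x')-g(t,x')\bigr)$. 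Using the cancellation here is essential: without it the crude bounds below only control $\|g\|_{L_p}$, not the seminorms.

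Taking the $L_p(\bR\times\bR^{n-1})$-norm in $(t,x')$, applying Minkowski's integral inequality, and using translation invariance of the $L_p$-norm, I obtain $\|D^{\alpha}u(\cdot,x_1,\cdot)\|_{L_p(\bR\times\bR^{n-1})}\le A(x_1)+B(x_1)$ with
$$
A(x_1)=\int_0^{\infty}\Bigl(\int_{\bR^{n-1}}|D^{\alpha}p(\tau,x_1,z')|\,dz'\Bigr)h_t(\tau)\,d\tau,\qquad
B(x_1)=\int_{\bR^{n-1}}\Bigl(\int_0^{\infty}|D^{\alpha}p(\tau,x_1,z')|\,d\tau\Bigr)h_x(z')\,dz',
$$
where $h_t(\tau)=\|g(\cdot-\tau,\cdot)-g\|_{L_p(\bR\times\bR^{n-1})}$ and $h_x(z')=\|g(\cdot,\cdot-z')-g\|_{L_p(\bR\times\bR^{n-1})}$. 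By \eqref{eq0401_04} one has $\int_0^{\infty}\tau^{-1-\mathsf{s}p/2}h_t(\tau)^p\,d\tau\simeq[g]_{W_p^{\mathsf{s}/2,0}}^p$, and by a change of variables $\int_{\bR^{n-1}}|z'|^{-(n-1+\mathsf{s}p)}h_x(z')^p\,dz'=[g]_{W_p^{0,\mathsf{s}}}^p$, so it remains to bound $A$ and $B$ in the weight $x_1^{\theta-n+mp}\,dx_1$ by these two quantities.

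From $p(\mu^2t,\mu x_1,\mu x')=\mu^{-(n+1)}p(t,x_1,x')$ and the pointwise Gaussian bounds on $D^{\alpha}p$ (cf. (\ref{e1}),(\ref{e2})) I record $\int_{\bR^{n-1}}|D^{\alpha}p(\tau,x_1,z')|\,dz'=\tau^{-(|\alpha|+2)/2}\Phi_{\alpha}(x_1/\sqrt{\tau})$, with $\Phi_{\alpha}$ bounded on $[0,\infty)$ and decaying faster than any power, and $\int_0^{\infty}|D^{\alpha}p(\tau,x_1,z')|\,d\tau\le N(x_1^2+|z'|^2)^{-(n-1+|\alpha|)/2}$. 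For $A$ I substitute $\tau=x_1^2\rho$ so that the kernel becomes a function of the scale-invariant variable $\rho$ alone; Minkowski's inequality in $\rho$ followed by inverting the substitution in the inner $x_1$-integral reproduces exactly the weight $\tau^{-1-\mathsf{s}p/2}$ on $h_t(\tau)^p$, and the leftover factor is the one-dimensional integral $\int_0^{\infty}r^{m-\mathsf{s}}\Phi_{\alpha}(r)\,dr$, which converges because $m\ge 0$ and $\mathsf{s}=(n-1+p-\theta)/p\in(0,1)$ (the endpoint $r=0$ needs $m-\mathsf{s}>-1$, the endpoint $r=\infty$ uses the decay of $\Phi_{\alpha}$). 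For $B$, whose kernel is homogeneous of degree $-(n-1+|\alpha|)$ in $(x_1,z')$, I substitute $z'=x_1w'$, apply Minkowski in $w'$, pass to polar coordinates, and recognize $\int_0^{\infty}\eta^{-1-\mathsf{s}p}h_x(\eta\omega')^p\,d\eta$ as the directional piece of $[g]_{W_p^{0,\mathsf{s}}}^p$; H\"older over $S^{n-2}$ together with the kernel bound (the integral $\int_0^{\infty}(1+\nu)^{-(n+m)}\nu^{n-2+\mathsf{s}}\,d\nu$ converges, again using $0<\mathsf{s}<1$) closes the estimate, and for $n=1$ one simply has $B\equiv 0$. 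Adding the two bounds gives \eqref{eq0409_01} with $N=N(n,p,\theta,m)$. Finally, since $p_t=\Delta p$, the function $u$ solves $u_t=\Delta u$ in $\bR\times\bR^n_+$, so for $|\alpha|=m$ one has $D^{\alpha}u_t=\Delta D^{\alpha}u=\sum_{i=1}^{n}D_i^2D^{\alpha}u$, a sum of spatial derivatives of $u$ of order $m+2$; applying \eqref{eq0409_01} with $m$ replaced by $m+1$ and summing yields the second asserted estimate.

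I expect the main obstacle to be these weighted $L_p$-in-$x_1$ estimates of $A$ and $B$: applying Minkowski's inequality directly in $\tau$ (respectively in $z'$) leaves a scale-critical one-dimensional integral that diverges, so one must first exploit the homogeneity of the kernels to reduce to a dilation-invariant integration variable; the convergence of the resulting integral is precisely where the hypotheses $n-1<\theta<n-1+p$ and $m\ge 0$ are consumed.
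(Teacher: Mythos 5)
Your argument is correct, and it follows the paper's skeleton up to the decisive estimate: the same representation \eqref{eq0406_01}, the same use of the cancellation \eqref{eqn 114.6} to insert $g(t,x')$, the same splitting of the increment into a time part and a space part, Minkowski's inequality to reduce to the two one-dimensional profiles $h_t(\tau)$ and $h_x(z')$, and the same reduction of the $u_t$ bound via $u_t=\Delta u$. Where you genuinely diverge is in how the weighted $L_p$-in-$x_1$ integrals of the kernel terms are handled. The paper keeps the explicit Gaussian expressions (\ref{e1})--(\ref{e2}) and runs a H\"older splitting with exponents $A,B$ tuned by an auxiliary parameter $\delta_1$ (leading to the quantities $\Theta_1,\Theta_2$ and the exponents $\kappa_1,\kappa_2$), whereas you first integrate the kernel in $z'$ (against $h_t$) or in $\tau$ (against $h_x$), invoke the parabolic homogeneity of $D^\alpha p$ to write these as $\tau^{-(m+3)/2}\Phi_\alpha(x_1/\sqrt\tau)$ and $N(x_1^2+|z'|^2)^{-(n+m)/2}$, and then rescale ($\tau=x_1^2\rho$, resp.\ $z'=x_1w'$) so that Minkowski produces exactly the weights $\tau^{-1-\mathsf{s}p/2}$ and $|z'|^{-(n-1+\mathsf{s}p)}$ of the seminorms (via \eqref{eq0401_04} and polar coordinates with H\"older on $S^{n-2}$), leaving dilation-invariant one-dimensional integrals such as $\int_0^\infty r^{m-\mathsf{s}}\Phi_\alpha(r)\,dr$ whose convergence consumes $0<\mathsf{s}<1$, i.e. $n-1<\theta<n-1+p$. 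I checked the exponent bookkeeping in both your $A$- and $B$-estimates and it is consistent (including the $n=1$ degeneration $B\equiv 0$ and the shift $m\mapsto m+1$ for the $u_t$ bound). What your route buys is the elimination of the exponent-juggling with $\delta_1$, $A$, $B$ and a cleaner view of where the hypotheses on $\theta$ enter; what the paper's route buys is that it works term by term on the explicit forms (\ref{e1})--(\ref{e2}) without needing to justify the scaling identities for the integrated kernels, which in your write-up should be stated with the uniform-in-direction integrability $\int_0^\infty |D^\alpha p(\sigma,\omega)|\,d\sigma\le N$ for $|\omega|=1$ made explicit (it does hold, by the decay in (\ref{e1})--(\ref{e2})).
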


\begin{proof}
Since the second inequality is a direct consequence of the first one with the fact $u_t = \Delta u$ in $\bR \times \bR^n_+$, we only prove the first inequality. 

From \eqref{eq0406_01} and \eqref{eqn 114.6}, we have
$$
D^{\alpha}u(t,x)=\int^{\infty}_0 \int_{\bR^{n-1}} D^{\alpha}p(s,x_1,y')
\left(g(t-s,x'-y')-g(t,x')\right) \, dy' \, ds,
$$
where by the the triangle inequality
\[
|g(t-s,x'-y')-g(t,x')|
\]
\[
\leq |g(t-s,x'-y')-g(t,x'-y')| + |g(t,x'-y') - g(t,x')|.
\]
Thus, by Minkowski's inequality it follows that
\begin{multline}
							\label{eq0409_02}
\|D^\alpha u(\cdot,x_1,\cdot)\|_p \leq  \int_0^\infty \int_{\bR^{n-1}} |D^\alpha p(s,x_1,y')| K_1(s) \, dy'\, ds
\\
+ \int_0^\infty \int_{\bR^{n-1}} |D^\alpha p(s,x_1,y')| K_2(y') \, dy'\, ds,
\end{multline}
where $\|\cdot\|_p = \|\cdot\|_{L_p(\bR \times \bR^{n-1})}$ and
\[
K_1(s) = \| g(\cdot-s,\cdot) - g(\cdot,\cdot)\|_p, \quad K_2(y') = \|g(\cdot,\cdot-y') - g(\cdot,\cdot)\|_p.
\]

Using induction, we see that $D^\alpha p(t,x_1,x')$ with $|\alpha|=m+1$ is a linear combination of the following terms with some non-negative integers $a$ and $b$:
\begin{align}
&I_1(m+1,a,b,\beta)  \nonumber
\\
&=x_1^{1+b} t^{-(n/2+m+2-a)} e^{-(x_1^2+|x'|^2)/4t} (x')^{\beta} \times 1_{|\beta|=m+1-2a-b}\,1_{t>0} \label{e1},
\end{align}
\begin{align}
&I_2(m+1,a,b,\beta) \nonumber
\\
&=x_1^{b} \, t^{-(n/2+m+1-a)}e^{-( {x_1^2+|x'|^2})/4t} (x')^{\beta}  \times 1_{|\beta|=m-2a-b}\,1_{t>0}, \label{e2}
\end{align}
where $\beta = (\beta_2,\ldots,\beta_n)$ is a multi-index with respect to $x'$.
For instance, $D_{x_1} p(t,x_1,x')$ is a linear combination of the term \eqref{e1} with $a=0$ and $b=1$, and the term \eqref{e2} with $a=0$ and $b=0$.
Hence, by \eqref{e1}, \eqref{e2}, and \eqref{eq0409_02}, to prove \eqref{eq0409_01} it is enough to show that, for $i,j=1,2$,
\begin{multline}
							\label{eq0409_03}
\int_0^\infty x_1^{\theta + mp - n} \left( \int_0^\infty \int_{\bR^{n-1}} I_j(m+1,a,b,\beta) K_i(s,y') \, dy' \, ds \right)^p dx_1
\\
\leq N \left( [g]_{W^{\mathsf{s}/2, 0}}^p + [g]_{W^{0, \mathsf{s}}}^p\right),
\end{multline}
where $K_i(s,y')$ is either $K_1(s)$ or $K_2(y')$.
Furthermore, thanks to the relation $I_1(m+1,a,b,\beta) = x_1 I_2(m+2,a,b,\beta)$, we only prove \eqref{eq0409_03} when $j = 2$.
We write
\[
I_2(m+1,a,b,\beta) = x_1^b J(m+1,a,b,\beta)
\]
and, using the condition that 
\begin{equation}
							\label{eq0409_05}
 n-1 < \theta < n-1+p,
\end{equation}
find a positive number $\delta_1$ such that
\begin{equation}
							\label{eq0409_04}
0 < \delta_1 < \frac{\theta - n + 1}{(p-1)(n+m+b+2)}.
\end{equation}
Then, set
\[
B = \frac{n+1}{n+m+b+2} + \delta_1,
\]
and find a number $A$ such that
\[
\frac{A}{p} + \frac{B}{q} = 1, \quad \text{where} \quad \frac{1}{p} + \frac{1}{q} = 1.
\]
By the choice of $\delta_1$ and \eqref{eq0409_05}, $A > 0$.
To prove \eqref{eq0409_03} with $j=2$, by H\"{o}lder's inequality, we have
\begin{equation}
							\label{eq0410_01}
\begin{aligned}
&\left( \int_0^\infty \int_{\bR^{n-1}} I_2(m+1,a,b,\beta) K_i(s,y') \, dy' \, ds \right)^p
\\
&\leq x_1^{bp} \left(\int_0^\infty \int_{\bR^{n-1}}|K_i|^p J^A \, dy' \, ds \right) \left(\int_0^\infty \int_{\bR^{n-1}} J^B  \, dy' \, ds\right)^{p/q}
\\
&:= x_1^{bp} \Theta_1 \Theta_2^{p/q}.
\end{aligned}
\end{equation}
Here we have that, since $B > (n+1)/(n+m+b+2)$,
\[
\Theta_2 \leq N x_1^{-B(n+m+b+2) + n + 1} \int_0^\infty s^{-\frac{B}{2}(n + m + b + 2) + \frac{n}{2} - \frac{1}{2} } e^{-\frac{B}{4s}} \, ds
\]
\[
= N x_1^{-B(n+m+b+2) + n + 1}.
\]
Regarding $\Theta_1$ for $K_i = K_1(s)$, we have
\[
\Theta_1 \leq \int_0^\infty |K_1(s)|^p s^{-A\left(\frac{n}{2}+m+1-a\right)} e^{-\frac{Ax_1^2}{4s}} \int_{\bR^{n-1}} e^{-\frac{A|y'|^2}{4s}} |y'|^{A|\beta|} \, dy' \, ds
\]
\[
= N \int_0^\infty |K_1(s)|^p s^{-\frac{A}{2}(n+m+b+2) + \frac{n}{2} - \frac{1}{2}} e^{-\frac{Ax_1^2}{4s}} \, ds.
\]
Thus, the left-hand side of \eqref{eq0409_03} with $I_2$ and $K_1$ is not greater than a constant times
\[
\int_0^\infty x_1^{\kappa_1} \int_0^\infty |K_1(s)|^p s^{-\frac{A}{2}(n+m+b+2) + \frac{n}{2} - \frac{1}{2}} e^{-\frac{Ax_1^2}{4s}} \, ds \, dx_1
\]
\[
= \int_0^\infty s^{-\frac{A}{2}(n+m+b+2) + \frac{n}{2} - \frac{1}{2}} |K_1(s)|^p \int_0^\infty x_1^{\kappa_1} e^{-\frac{Ax_1^2}{4s}} \, dx_1 \, ds
\]
\[
= N\int_0^\infty s^{-1 - \frac{1}{2}(n - 1 + p - \theta)} |K_1(s)|^p \, ds \leq N [g]^p_{W_p^{\mathsf{s}/2,0}(\bR \times \bR^{n-1})},
\]
where, by the choice of $B$ with \eqref{eq0409_05},
\[
\kappa_1 = \theta + mp - n + bp -\frac{p}{q}B(n+m+b+2) + \frac{p}{q}(n+1) > - 1.
\]
The above inequality proves \eqref{eq0409_03} for $j=2$ and $i=1$.
To show \eqref{eq0409_03} with $j=i=2$, since $K_2(y') = 0$ when $n=1$, we assume $n \geq 2$.
Then, upon recalling the inequalities in \eqref{eq0410_01} and the above estimate for $\Theta_2$, we see that the left-hand side of \eqref{eq0409_03} with $K_2(y')$ is not greater than a constant times
\[
\int_0^\infty x_1^{\kappa_1} \int_0^\infty \int_{\bR^{n-1}} |K_2(y')|^p s^{-A\left(\frac{n}{2}+m+1-a\right)} e^{-\frac{A(x_1^2+|y'|^2)}{4s}} |y'|^{A|\beta|} \, dy' \, ds \, dx_1
\]
\[
= N \int_{\bR^{n-1}} |K_2(y')|^p|y'|^{A(m-2a-b)} \int_0^\infty s^{\kappa_2} e^{-\frac{A|y'|^2}{4s}} \, ds \, dy'
\]
\[
= N \int_{\bR^{n-1}} |K_2(y')|^p|y'|^{A(m-2a-b) + 2 \kappa_2 + 2} \, dy' = N [g]_{W_p^{0,\mathsf{s}}(\bR \times \bR^{n-1})}^p,
\]
where, by the choice of $A$, the definition of $\kappa_1$, \eqref{eq0409_05}, and the fact that $n\geq 2$, 
\[
\kappa_2: = -A\left(\frac{n}{2}+m+1-a\right) + \frac{\kappa_1}{2} + \frac{1}{2} = \frac{\theta}{2} - \frac{p}{2} - n - \frac{A}{2}|\beta| < - 1,
\]
\[
A(m-2a-b)+2\kappa_2+2 = \theta - p - 2n + 2 = - (n - 1 + \mathsf{s}p).
\]
Hence, the inequality \eqref{eq0409_03} is also proved for $j=i=2$.
Therefore, the inequality \eqref{eq0409_01} is proved, and so is the lemma.
\end{proof}

\begin{proof}[Proof of Proposition \ref{prop0715_01}]
For $\gamma$, take an integer $k$ such that $k \ge \max\{\gamma, 2\}$.
Since $C_0^{\infty}(\bR \times \bR^{n-1})$ is dense in $W_p^{s/2, s}(\bR \times \bR^{n-1})$, it is enough to find $u \in \widetilde{\fH}_{p, \theta}^{k}(\bR)$ satisfying \eqref{eq0718_01} for $g \in C_0^{\infty}(\bR \times \bR^{n-1}) \cap W_p^{\mathsf{s}/2, \mathsf{s}}(\bR \times \bR^{n-1})$.

Set $v = g  *  p(\cdot,x_1,\cdot)(t,x')$ as in Lemma \ref{lem0603_03} and let $u(t,x) := v(t, x) \zeta(x_1)$ where $\zeta \in C_0^{\infty}(\overline{\bR_+})$ such that $\zeta(x_1) = 1$ for $x_1 \le 1$ and $\zeta(x_1) = 0$ for $x_1 \ge 2$.
Note that from \eqref{eqn 160112.8} and Young's inequality for convolutions,
\[
\|u\|_{\bL_{p,\theta}(\bR)}^p \leq \int_0^2 \|v(\cdot,x_1,\cdot)\|_p^p \, x_1^{\theta - n} \, dx_1 \leq N \|g\|_p,
\]
where $N = N(n, p, \theta)$, $\|\cdot\|_p = \|\cdot\|_{L_p(\bR \times \bR^{n-1})}$, and we used the fact that $\theta - n > - 1$.
On the other hand, by Lemma \ref{lem0603_03}, we have
$$
\|Dv\|_{\bH_{p,\theta}^{k-1}(\bR)} + \|v_t\|_{\bH_{p,\theta}^{k-2}(\bR)}\le N  \left( [g]_{W_p^{\mathsf{s}/2,0}(\bR \times \bR^{n-1})} + [g]_{W_p^{0,\mathsf{s}}(\bR \times \bR^{n-1})} \right),
$$
where $N=N(n, p, \theta, k) = N(n, p, \theta, \gamma)$.
Then the desired inequality follows by combining the above two inequalities, the fact that $x_1^{\theta - n + mp} \leq N(m) x_1^{\theta - n + jp}$ for $x_1 \in (0,2)$ and $j \in \{0, \ldots,m\}$, and the inequality (see Proposition \ref{propertyweighted})
$$
\|u\|_{\widetilde{\fH}_{p,\theta}^\gamma(\bR)} \le \|u\|_{\widetilde{\fH}_{p,\theta}^k(\bR)}.
$$
\end{proof}

\begin{remark}
	\label{rmk0117}
Note that $u(=v\zeta)$ in the proof of Proposition \ref{prop0715_01} satisfies 
$$
\| u \|_{\mathcal{W}_{p, \theta + kp}^{k+1}(\bR )}  \le N\| u \|_{\widetilde{\fH}_{p, \theta}^{k+1}(\bR)}  \le N(n, p, \theta, k) \| g \|_{W_p^{\mathsf{s}/2, \mathsf{s}}(\bR \times \bR^{n-1})}
$$
for any positive integer $k$, since $u$ satisfies \eqref{eq0718_01} for any $\gamma \in \bR$ and $u(t, x) = 0$ when $x_1 \ge 2$.
\end{remark}

Once we have Proposition \ref{prop0715_01}, by applying a flattening argument with the help of \cite[Lemma 2.14]{MR2352844} and Proposition \ref{propertyweighted} (4), we obtain the following extension result for $\partial\Omega \in C^{0, 1}$.

\begin{corollary}
	\label{extension_lip}
Let $p$, $\theta$ and $\mathsf{s}$ be as in Proposition \ref{prop0715_01} and $\partial\Omega \in C^{0, 1}$.
Then for $g \in W_p^{\mathsf{s}/2,\mathsf{s}}(\bR \times \partial\Omega)$ and $\gamma \in \bR$, there exists
$u \in \widetilde{\fH}_{p,\theta}^\gamma (\bR)$ such that
$$
\cT u = g,
$$
where $\cT$ is the trace operator from $\widetilde{\fH}_{p,\theta}^1 (\bR)$ to $W_p^{\mathsf{s}/2, \mathsf{s}}(\bR \times \partial\Omega )$ as in Corollary \ref{trace_optimal}.
Moreover,
\begin{equation}
\|u\|_{\widetilde{\fH}_{p,\theta}^\gamma (\bR)} \le N\|g\|_{W_p^{\mathsf{s}/2,\mathsf{s}}(\bR \times \partial\Omega )},
\end{equation}
where $N=N(n, p, \theta, \gamma, M, K, R)$.
\end{corollary}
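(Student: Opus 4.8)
The plan is to transfer the half-space extension of Proposition \ref{prop0715_01} to $\Omega$ by a localization-and-flattening argument built on the charts $\Psi_j$ and the partition of unity $\{\varphi_j\}_{j=1}^{M}$ fixed before Definition \ref{trace_space}. First I would reduce to an integer exponent: given $\gamma\in\bR$, fix an integer $k\ge\max\{\gamma,2\}$; by Proposition \ref{propertyweighted} (2) and the definition of the $\widetilde{\fH}$-norm one has $\widetilde{\fH}_{p,\theta}^{k}(\bR)\subset\widetilde{\fH}_{p,\theta}^{\gamma}(\bR)$ with $\|u\|_{\widetilde{\fH}_{p,\theta}^{\gamma}(\bR)}\le N\|u\|_{\widetilde{\fH}_{p,\theta}^{k}(\bR)}$, so it is enough to produce $u\in\widetilde{\fH}_{p,\theta}^{k}(\bR)$ with $\cT u=g$ and $\|u\|_{\widetilde{\fH}_{p,\theta}^{k}(\bR)}\le N\|g\|_{W_p^{\mathsf{s}/2,\mathsf{s}}(\bR\times\partial\Omega)}$. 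Because $\theta-n+kp>0$, the $u$ I construct will in fact lie in $\mathcal{W}_{p,\theta+kp}^{k+1}(\bR)$, which embeds into $\widetilde{\fH}_{p,\theta}^{k}(\bR)$ by Lemma \ref{lem0910_01}.

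Next I would set $\tilde g_j(t,y'):=(\varphi_j g)(t,\Psi_j^{-1}(0,y'))$; by Definition \ref{trace_space} this lies in $W_p^{\mathsf{s}/2,\mathsf{s}}(\bR\times\bR^{n-1})$ with $\|\tilde g_j\|_{W_p^{\mathsf{s}/2,\mathsf{s}}(\bR\times\bR^{n-1})}\le\|g\|_{W_p^{\mathsf{s}/2,\mathsf{s}}(\bR\times\partial\Omega)}$ and has compact support. Applying Proposition \ref{prop0715_01} and Remark \ref{rmk0117} to $\tilde g_j$, I obtain on $\bR\times\bR^n_+$ a function $\tilde u_j\in\mathcal{W}_{p,\theta+kp}^{k+1}(\bR)$ that is smooth in $\bR\times\bR^n_+$, continuous up to $\{y_1=0\}$ with boundary value $\tilde g_j$, vanishes for $y_1\ge 2$, and satisfies $\|\tilde u_j\|_{\mathcal{W}_{p,\theta+kp}^{k+1}(\bR)}\le N\|\tilde g_j\|_{W_p^{\mathsf{s}/2,\mathsf{s}}(\bR\times\bR^{n-1})}$. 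I then pick $\chi_j\in C_0^{\infty}(\bR^n)$ equal to $1$ on a neighborhood of $\Psi_j(\operatorname{supp}\varphi_j\cap\overline{U_j})$ and supported so narrowly that $\Psi_j^{-1}(\overline{\bR^n_+}\cap\operatorname{supp}\chi_j)\subset B_j$; by the Leibniz rule $\chi_j\tilde u_j$ remains in $\mathcal{W}_{p,\theta+kp}^{k+1}(\bR)$ with a comparable norm and still has trace $\tilde g_j$. Finally I set $u_j(t,x):=(\chi_j\tilde u_j)(t,\Psi_j(x))$ for $x\in U_j$, extend it by zero to $\bR\times\Omega$ (legitimate since $u_j$ is supported in $\bR\times(B_j\cap\Omega)$ and continuous on $\overline{\bR\times\Omega}$), and put $u:=\sum_{j=1}^{M}u_j$. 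The trace identity is then read off chart by chart: for $x\in\partial\Omega\cap B_j$ one has $\Psi_j(x)=(0,y')$ with $\Psi_j^{-1}(0,y')=x$, so $\cT u_j(t,x)=\chi_j(0,y')\,\tilde g_j(t,y')=(\varphi_j g)(t,x)$ on $\operatorname{supp}\varphi_j$, while $u_j$ vanishes off $B_j$; hence $\cT u=\sum_j\varphi_j g=g$.

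The single nontrivial point will be the boundedness of the flattening maps $w\mapsto w(\cdot,\Psi_j(\cdot))$ from the weighted Sobolev spaces on $\bR\times\bR^n_+$ to the corresponding spaces on $\bR\times U_j$, and this is precisely where $\partial\Omega\in C^{0,1}$ enters. Here I would invoke \cite[Lemma 2.14]{MR2352844}: the charts $\Psi_j$ may be taken (this is harmless, since the $W_p^{\mathsf{s}/2,\mathsf{s}}$-norm is chart-independent) so that $\rho_\Omega$ is comparable with $\rho_{\bR^n_+}\circ\Psi_j$ on $U_j$ and $|D^\alpha\Psi_j(x)|\le N\rho_\Omega(x)^{1-|\alpha|}$ — the same structural bounds as for the regularized distance in Remark \ref{rem0108_1}. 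Combining these with the chain and Leibniz rules and with the characterization $H_{p,\vartheta}^{m}(\Omega)=\{v:\rho^{|\alpha|}D^{\alpha}v\in L_{p,\vartheta}(\Omega),\ 0\le|\alpha|\le m\}$ of Proposition \ref{propertyweighted} (4) would show that $D^\alpha u_j$ for $|\alpha|\le k+1$ and $D^\beta(u_j)_t$ for $|\beta|\le k-1$ inherit the weighted $\bL_{p,\theta+kp}$ bounds of $\chi_j\tilde u_j$, with constants depending only on $n,p,\theta,k,M,K,R$. Thus $u_j\in\mathcal{W}_{p,\theta+kp}^{k+1}(\bR)\subset\widetilde{\fH}_{p,\theta}^{k}(\bR)$ with $\|u_j\|_{\widetilde{\fH}_{p,\theta}^{k}(\bR)}\le N\|g\|_{W_p^{\mathsf{s}/2,\mathsf{s}}(\bR\times\partial\Omega)}$; summing over $j=1,\dots,M$ and invoking the reduction of the first paragraph would finish the proof. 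The hard part, then, is not any single estimate but the bookkeeping in this flattening step — matching the weights $\rho^{\theta+kp-n}$ across the bi-Lipschitz charts and taming the blow-up of the derivatives of $\Psi_j$ near $\partial\Omega$ — which is why I would route it through \cite{MR2352844} rather than redo it from scratch.
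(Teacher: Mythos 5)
Your proposal is correct and follows essentially the same route as the paper, whose proof of this corollary is exactly the localization--flattening argument you describe: apply Proposition \ref{prop0715_01} (together with Remark \ref{rmk0117}) chart by chart and transfer the weighted bounds via \cite[Lemma 2.14]{MR2352844} and Proposition \ref{propertyweighted} (4). Your write-up merely fills in the bookkeeping the paper leaves implicit (the integer-order reduction through $\mathcal{W}_{p,\theta+kp}^{k+1}(\bR)\subset\widetilde{\fH}_{p,\theta}^{k}(\bR)$, the cut-offs $\chi_j$, and the chartwise verification of $\cT u=g$).
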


\section{Proof of Theorem \ref{thm0716_01}}
							\label{main}

In Section \ref{trace} we proved the trace part of Theorem \ref{thm0716_01} when $\gamma = 1$. In this section we prove the trace theorem for any $\gamma \geq 1$, which together with the extension theorems in Section \ref{exten} completes the proof of Theorem \ref{thm0716_01}.
To treat the case with $\gamma \geq 1$, we first extend the result in Lemma \ref{lem0901} so that the denseness holds for all $\gamma \geq 1$.
For the proof, we use the following observation.

\begin{lemma}
							\label{lem0905_2}
Let $1 < p < \infty$, $-\infty \le S < T \le \infty$, and $n-1 < \theta < n - 1 + p$. If $u \in \widetilde{\fH}_{p, \theta}^\gamma(S, T)$ with $\gamma \ge 1$ and $\cT u = 0$, then $u \in \fH_{p, \theta}^\gamma(S, T)$ and
\begin{equation}
	\label{eq210404}
\left\| u \right\|_{\fH_{p, \theta}^\gamma(S, T)} \le N \left\| u \right\|_{\widetilde{\fH}_{p, \theta}^\gamma(S, T)} 
\end{equation}
where $N = N(n, p, \theta)$ if $\Omega = \bR^n_+$ and $N = N(n, p, \theta, M, K, R)$ if $\partial \Omega \in C^{0, 1}$.
\end{lemma}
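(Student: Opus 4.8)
The plan is to exploit the fact that $\widetilde{\fH}_{p,\theta}^\gamma$ and $\fH_{p,\theta}^\gamma$ differ by a single extra requirement on $u$. Indeed, applying Proposition \ref{propertyweighted}(5) with $\theta$ replaced by $\theta-p$, we have $u\in\bH_{p,\theta-p}^{\gamma}(S,T)$ if and only if $u\in\bH_{p,\theta-p}^{\gamma-1}(S,T)$ and $Du\in\bH_{p,\theta}^{\gamma-1}(S,T)$, with equivalence of the associated norms. Since $Du\in\bH_{p,\theta}^{\gamma-1}(S,T)$ and $u_t\in\bH_{p,\theta+p}^{\gamma-2}(S,T)$ are already built into the definition of $\widetilde{\fH}_{p,\theta}^\gamma(S,T)$, to obtain $u\in\fH_{p,\theta}^\gamma(S,T)$ together with \eqref{eq210404} it suffices to prove
\[
u\in\bH_{p,\theta-p}^{\gamma-1}(S,T)\quad\text{and}\quad \|u\|_{\bH_{p,\theta-p}^{\gamma-1}(S,T)}\le N\|u\|_{\widetilde{\fH}_{p,\theta}^\gamma(S,T)}.
\]

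I would then reduce this to the case $\gamma=1$. Since $\gamma\ge1$ implies $\widetilde{\fH}_{p,\theta}^\gamma(S,T)\subseteq\widetilde{\fH}_{p,\theta}^1(S,T)$, the case $\gamma=1$ yields $u\in\bL_{p,\theta-p}(S,T)=\bH_{p,\theta-p}^0(S,T)$ with bound $N\|u\|_{\widetilde{\fH}_{p,\theta}^1(S,T)}\le N\|u\|_{\widetilde{\fH}_{p,\theta}^\gamma(S,T)}$. One then climbs from $\bH_{p,\theta-p}^0$ up to $\bH_{p,\theta-p}^{\gamma-1}$ by applying Proposition \ref{propertyweighted}(5) (with weight $\theta-p$) finitely many times: at each step the required control of first derivatives, of the form $Du\in\bH_{p,\theta}^{\sigma}(S,T)$ with $\sigma\le\gamma-1$, follows from $Du\in\bH_{p,\theta}^{\gamma-1}(S,T)$ by Proposition \ref{propertyweighted}(2), and after $\lceil\gamma-1\rceil$ steps the remaining condition on $u$ carries a nonpositive smoothness index and is covered by $u\in\bH_{p,\theta-p}^0(S,T)$ (again using Proposition \ref{propertyweighted}(2) when that index is negative). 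The norm estimates propagate through the equivalences in Proposition \ref{propertyweighted}(5).

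It remains to prove the case $\gamma=1$, which is precisely the weighted Hardy inequality $\|u\|_{\bL_{p,\theta-p}(S,T)}\le N\|Du\|_{\bL_{p,\theta}(S,T)}$ for $u\in\widetilde{\fH}_{p,\theta}^1(S,T)$ with $\cT u=0$. For $\Omega=\bR^n_+$, where $\rho(x)=x_1$, I would argue on almost every line normal to the boundary: since $u,D_1u\in\bL_{p,\theta}(S,T)$ and $n-1<\theta<n-1+p$, H\"older's inequality gives $D_1u(t,\cdot,x')\in L_1(0,R)$ for every $R>0$ and a.e. $(t,x')$, so $u(t,\cdot,x')$ is absolutely continuous up to $x_1=0$ with boundary value equal to $(\cT u)(t,x')=0$ for a.e. $(t,x')$; hence $u(t,x_1,x')=\int_0^{x_1}D_1u(t,s,x')\,ds$, and the one-dimensional Hardy inequality---valid because $\theta-n<p-1$, which is exactly the condition under which the boundary value (rather than the value at infinity) must vanish---gives
\[
\int_0^\infty \Big|\frac{u(t,x_1,x')}{x_1}\Big|^p x_1^{\theta-n}\,dx_1\le N\int_0^\infty |D_1u(t,x_1,x')|^p x_1^{\theta-n}\,dx_1
\]
for a.e. $(t,x')$; integrating in $(t,x')$ finishes this case. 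For a bounded Lipschitz domain $\Omega$, I would localize using the partition of unity $\{\varphi_j\}$ and flatten each piece via $\Psi_j$ onto $\bR^n_+$; the $\widetilde{\fH}_{p,\theta}^1$-norm is comparable under such Lipschitz changes of variables (as in \cite{MR1796011} and the diffeomorphism of \cite{MR2352844}), the flattened pieces have zero trace on $\{x_1=0\}$, so the half-space estimate transfers back with $N$ depending additionally on $M,K,R$.

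The step I expect to be the main obstacle is this $\gamma=1$ case, and within it two points require care: first, identifying the a.e.\ normal boundary limit of $u\in\widetilde{\fH}_{p,\theta}^1$ with its trace $\cT u$ (defined by density of smooth functions), so that the hypothesis $\cT u=0$ may legitimately be used pointwise along normal lines; and second, verifying that the Lipschitz flattening preserves the weighted classes with the stated dependence of the constants. Once $\gamma=1$ is settled, the passage to all $\gamma\ge1$ is a purely algebraic bootstrap on Proposition \ref{propertyweighted}, with no further analytic input.
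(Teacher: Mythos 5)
Your overall reduction is the same as the paper's: both arguments reduce \eqref{eq210404} to the single inequality $\|u\|_{\bL_{p,\theta-p}(S,T)}\le N\|u\|_{\widetilde{\fH}_{p,\theta}^{\gamma}(S,T)}$ and then reach general $\gamma\ge 1$ by iterating Proposition \ref{propertyweighted} (2) and (5), exactly as you describe. Where you genuinely diverge is in the proof of that core Hardy-type inequality. You argue pointwise along normal lines: absolute continuity of $u(t,\cdot,x')$, vanishing boundary value, then the one-dimensional Hardy inequality. This forces the identification of the a.e.\ normal limit $u(t,0^+,x')$ with the density-defined trace $\cT u$ --- precisely the step you flag and do not carry out. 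The paper's proof is structured to avoid any pointwise interpretation of the trace: it takes $u^m\in C_0^\infty(\overline{(S,T)\times\bR^n_+})$ with $u^m\to u$ in $\widetilde{\fH}^1_{p,\theta}$ (Lemma \ref{lem0901}), notes via Proposition \ref{prop0712_01} and $\cT u=0$ that $\|u^m(\cdot,0,\cdot)\|_{L_p}\to 0$, applies Hardy's inequality to the truncations $\zeta(mx_1)u^m$, which vanish identically near $x_1=0$, and passes to the limit; the only delicate point there is $\|\zeta_m'u^m\|_{\bL_{p,\theta}}\to 0$, obtained from the smallness of the boundary values and of $\|D_1u^m\|_{\bL_{p,\theta}}$ on the strip $\{x_1<1/m\}$.

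Your route does work, because the flagged identification can be closed with the same ingredients, so the omission is fillable rather than fatal. For a.e.\ $(t,x')$ and a.e.\ $x_1\in(0,\delta)$,
\[
|u(t,0^+,x')-u^m(t,0,x')|\le \int_0^{\delta}\bigl(|D_1u|+|D_1u^m|\bigr)(t,r,x')\,dr+|u(t,x_1,x')-u^m(t,x_1,x')|;
\]
averaging in $x_1$ over $(0,\delta)$, taking $L_p$ norms in $(t,x')$, and using H\"older with the weight $x_1^{\theta-n}$ as in \eqref{eq0325_02} gives
\[
\|u(\cdot,0^+,\cdot)-u^m(\cdot,0,\cdot)\|_{L_p}\le N\delta^{\mathsf{s}}\bigl(\|1_{x_1<\delta}D_1u\|_{\bL_{p,\theta}}+\|1_{x_1<\delta}D_1u^m\|_{\bL_{p,\theta}}\bigr)+N\delta^{\mathsf{s}-1}\|u-u^m\|_{\bL_{p,\theta}},
\]
and letting $m\to\infty$ and then $\delta\to 0$ shows $u^m(\cdot,0,\cdot)\to u(\cdot,0^+,\cdot)$ in $L_p$, hence $\cT u=u(\cdot,0^+,\cdot)$ a.e. With this inserted, your line-by-line Hardy argument and your localization/flattening for $\partial\Omega\in C^{0,1}$ (which matches what the paper does implicitly) yield the lemma with the stated dependence of $N$. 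In comparison, the paper's cutoff argument never needs a pointwise trace and stays entirely at the level of approximating sequences, while your argument, once the identification is supplied, is more classical and makes transparent that $\theta<n-1+p$ is exactly the Hardy threshold; the bootstrap from $\gamma=1$ to general $\gamma$ is identical in both.
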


\begin{proof}
We first remark that, since $\widetilde{\fH}_{p, \theta}^\gamma(S, T) \subset \widetilde{\fH}_{p, \theta}^1(S, T)$ for $\gamma \geq 1$, the trace operator $\cT$ in the lemma is the one from $\widetilde{\fH}_{p, \theta}^1(S, T)$ to $W_p^{s/2, s}\left( \left(S, T \right) \times \partial\Omega \right)$ in Proposition \ref{prop0712_01} or Corollary \ref{trace_optimal} by viewing $u$ as an element of $\widetilde{\fH}_{p, \theta}^1(S, T)$ (with an extension from $(S, T)$ to $\bR$ as in Remark \ref{finite_time}, if necessary).

To prove the lemma, it suffices to show that $u \in \bL_{p, \theta - p}(S, T)$ and
\begin{equation}
							\label{eq0411_01}
\|u \|_{\bL_{p, \theta - p}(S, T)} \le N\|u\|_{\widetilde{\fH}_{p, \theta}^{\gamma}(S, T)}.
\end{equation}
Indeed, once we have the above inequality, we use Proposition \ref{propertyweighted} (2) and (5) sufficiently many times until we have the estimate \eqref{eq210404} (see the proof of \cite[Lemma 2.14]{MR2386392}).

We now prove \eqref{eq0411_01} only for $\Omega = \bR^n_+$.
The other case can be treated as earlier using a partition of unity and a flattening argument.
Let $u \in \widetilde{\fH}_{p, \theta}^\gamma(S, T)$.
Since $\widetilde{\fH}_{p, \theta}^\gamma(S, T) \subset \widetilde{\fH}_{p, \theta}^1(S, T)$, by Lemma \ref{lem0901}, there exists a sequence $\{u^m\} \subset C_0^\infty(\overline{(S,T) \times \bR^n_+})$ such that $u^m \to u$ in $\widetilde{\fH}_{p,\theta}^1(S,T)$.
By Proposition \ref{prop0712_01} with $\cT u = 0$, we see that $u_m(t,0,x') \to 0$ in $W_p^{s/2,s}((S,T) \times \bR^{n-1})$ as $m \to \infty$.
In particular, $u^m(t,0,x') \to 0$ in  $L_p((S,T) \times \bR^{n-1})$ as $m \to \infty$.
After relabeling, we may assuming that
\begin{equation}
							\label{eq0411_05}
\|u^m(\cdot,0,\cdot)\|_{L_p((S,T) \times \bR^{n-1})} < \frac{1}{m}.\end{equation}
Find an infinitely differentiable function $\zeta(x_1)$ such that $\zeta(x_1) = 0$ for $x_1 \leq 0$ and $\zeta(x_1) = 1$ for $x_1 \geq 1$.
Then set $\zeta_m(x_1) = \zeta(m x_1)$ and
\[
\tilde{u}^m(t,x_1,x') = \zeta_m(x_1)u^m(t,x_1,x').
\]
Since $\tilde{u}^m(t,0,x') = 0$, by Hardy's inequality it follows easily that
\begin{equation}
							\label{eq0411_04}
\|\tilde{u}^m\|_{\bL_{p,\theta-p}(S,T)} \leq N \|D\tilde{u}^m\|_{\bL_{p,\theta}(S,T)},
\end{equation}
where $N = N(n,p,\theta)$.
Thus, to complete the proof of \eqref{eq0411_01}, it only remains to show that
\begin{equation}
							\label{eq0411_03}
\|D_1\tilde{u}^m - D_1 u\|_{\bL_{p,\theta}(S,T)} \to 0
\end{equation}
as $m \to \infty$.
Indeed, because $u^m \to u$ in $\widetilde{\fH}_{p,\theta}^1(S,T)$ it is clear that 
\[
\|\tilde{u}^m - u\|_{\bL_{p,\theta}(S,T)} + \|D_{x'}\tilde{u}^m - D_{x'}u\|_{\bL_{p,\theta}(S,T)} \to 0
\]
as $m \to \infty$.
This combined with \eqref{eq0411_03} and \eqref{eq0411_04} implies
\[
\|u\|_{\bL_{p,\theta-p}(S,T)} \leq N \|Du\|_{\bL_{p,\theta}(S,T)},
\]
which proves  \eqref{eq0411_01} upon recalling that $\|Du\|_{\bL_{p,\theta}(S,T)} \leq \|u\|_{\widetilde{\fH}_{p, \theta}^{\gamma}(S, T)}$ when $\gamma \geq 1$.
To prove \eqref{eq0411_03}, observe that
\[
\|D_1\tilde{u}^m - D_1 u\|_{\bL_{p,\theta}(S,T)} \leq \|\zeta'_m u^m \|_{\bL_{p,\theta}(S,T)} + \|\zeta_m D_1 u^m - D_1 u\|_{\bL_{p,\theta}(S,T)},
\]
where the last term clearly vanishes as $m \to \infty$.
Thus, we prove
\begin{equation}
							\label{eq0411_06}
\|\zeta'_m u^m \|_{\bL_{p,\theta}(S,T)} \to 0 \quad \text{as} \quad m \to \infty.
\end{equation}
Since
\[
|u^m(t,x_1,x')| \leq \int_0^{x_1} |D_1u^m(t,r,x')| \, dr + |u^m(t,0,x')|,
\]
by multiplying both sides of this inequality by $|\zeta'_m(x_1)|$ and using Minkowski's inequality, we get
\[
|\zeta'_m(x_1)|\|u^m(\cdot,x_1,\cdot)\|_p \leq |\zeta'_m(x_1)|\int_0^{x_1} \|D_1u^m(\cdot,r,\cdot)\|_p \, d r
\]
\[
+ |\zeta'_m(x_1)|\|u^m(\cdot,0,\cdot)\|_p,
\]
where $\|\cdot\|_p = \|\cdot\|_{L_p((S,T) \times \bR^{n-1})}$ and by H\"{o}lder's inequality
\[
\int_0^{x_1} \|D_1u^m(\cdot,r,\cdot)\|_p \, dr \leq N x_1^{1 - \frac{\theta-n+1}{p}} \left(\int_0^{x_1} \|D_1u^m(\cdot,r,\cdot)\|_p^p \, r^{\theta - n} \, dr\right)^{1/p}.
\]
Hence,
\[
\|\zeta'_m u^m \|_{\bL_{p,\theta}(S,T)}^p \leq \int_0^\infty |\zeta_m'(x_1)|^p \|u^m(\cdot,0,\cdot)\|_p^p \, x_1^{\theta - n} \, dx_1
\]
\[
+ N \int_0^\infty |\zeta_m'(x_1)|^p x_1^{p-1} \int_0^{x_1} \|D_1u^m(\cdot,r,\cdot)\|_p^p \, r^{\theta - n} \, dr \, dx_1 =: I_1 + I_2,
\]
where by the choice of $\zeta_m$ and \eqref{eq0411_05}
\[
I_1 \leq \int_0^{\frac{1}{m}} m^p m^{-p} x_1^{\theta - n} \, dx_1 \leq N \left(\frac{1}{m}\right)^{\theta - n + 1} \to 0
\]
as $m \to \infty$.
For $I_2$, we have
\[
I_2 = N \int_0^\infty \|D_1u^m(\cdot,r,\cdot)\|_p^p \, r^{\theta - n} \int_r^\infty |\zeta'_m(x_1)|^p x_1^{p-1}  \, dx_1 \, dr
\]
\[
\leq N \int_0^{\frac{1}{m}} \|D_1u^m(\cdot,r,\cdot)\|_p^p \, r^{\theta - n} \, dr \to 0
\]
as $m \to \infty$ because $Du^m \to Du$ in $\bL_{p,\theta}(S,T)$.
This proves \eqref{eq0411_06}.
Therefore, \eqref{eq0411_03} is proved, and so is \eqref{eq0411_01}.
\end{proof}

\begin{lemma}
							\label{lem0603_1}
Let $1< p < \infty$, $-\infty \le S < T \le \infty$ and $n-1 < \theta <n-1+p$.
If $\gamma \ge 1$, $C_0^\infty(\overline{(S,T) \times \Omega})$ is dense in $\widetilde{\fH}_{p,\theta}^\gamma(S,T)$.
\end{lemma}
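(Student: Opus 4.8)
The plan is to adapt the argument of \cite[Lemma 2.17]{MR2386392}: split an arbitrary $u\in\widetilde{\fH}_{p,\theta}^\gamma(S,T)$ as $u=w+v$, where $w$ is an extension carrying the lateral trace of $u$ that can be approximated by smooth functions, and $v:=u-w$ has vanishing lateral trace, and then approximate $w$ and $v$ by elements of $C_0^\infty(\overline{(S,T)\times\Omega})$ separately. By Remark \ref{finite_time} it suffices to take $(S,T)=\bR$. Set $g:=\cT u\in W_p^{\mathsf{s}/2,\mathsf{s}}(\bR\times\partial\Omega)$ (the trace is defined by the $\gamma=1$ case, Proposition \ref{prop0712_01}/Corollary \ref{trace_optimal}, since $\widetilde{\fH}_{p,\theta}^\gamma(\bR)\subset\widetilde{\fH}_{p,\theta}^1(\bR)$), fix an integer $k\ge\max\{\gamma,2\}$, and let $w:=\cS g\in\widetilde{\fH}_{p,\theta}^k(\bR)$ be the extension from Proposition \ref{prop0715_01}/Corollary \ref{extension_lip}, so that $\cT w=g$; then $v:=u-w\in\widetilde{\fH}_{p,\theta}^\gamma(\bR)$ and $\cT v=0$.

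For $w$: by Remark \ref{rmk0117} (and its counterpart for $\partial\Omega\in C^{0,1}$, obtained by the same flattening used to pass from Proposition \ref{prop0715_01} to Corollary \ref{extension_lip}), $w\in\mathcal{W}_{p,\theta+kp}^{k+1}(\bR)$. Lemma \ref{lem0905_1} gives that $C_0^\infty(\overline{\bR\times\Omega})$ is dense in $\mathcal{W}_{p,\theta+kp}^{k+1}(\bR)$, Lemma \ref{lem0910_01} gives the continuous embedding $\mathcal{W}_{p,\theta+kp}^{k+1}(\bR)\subset\widetilde{\fH}_{p,\theta}^k(\bR)$, and $\widetilde{\fH}_{p,\theta}^k(\bR)\subset\widetilde{\fH}_{p,\theta}^\gamma(\bR)$ continuously because $k\ge\gamma$ (apply Proposition \ref{propertyweighted} (2) to each of the three terms defining the $\widetilde{\fH}$-norm). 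Hence $w$ lies in the $\widetilde{\fH}_{p,\theta}^\gamma(\bR)$-closure of $C_0^\infty(\overline{\bR\times\Omega})$.

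For $v$: by Lemma \ref{lem0905_2}, $v\in\fH_{p,\theta}^\gamma(\bR)$ with $\|v\|_{\fH_{p,\theta}^\gamma(\bR)}\le N\|v\|_{\widetilde{\fH}_{p,\theta}^\gamma(\bR)}$, and by \cite{MR1708104,MR1796011} smooth functions with compact support in the spatial domain are dense in $\fH_{p,\theta}^\gamma(\bR)$. It then remains to see that $\fH_{p,\theta}^\gamma$-convergence of such functions toward $v$ entails $\widetilde{\fH}_{p,\theta}^\gamma$-convergence, i.e. that $\|\cdot\|_{\widetilde{\fH}_{p,\theta}^\gamma(\bR)}\le N\|\cdot\|_{\fH_{p,\theta}^\gamma(\bR)}$ on $\fH_{p,\theta}^\gamma(\bR)$. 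When $\Omega$ is bounded this follows at once from Proposition \ref{propertyweighted} (5) and (6), the only point being $\|u\|_{\bH_{p,\theta}^{\gamma-1}}\le N\|u\|_{\bH_{p,\theta-p}^{\gamma-1}}$ (valid since $\theta-p<\theta$). When $\Omega=\bR^n_+$ the inequality fails globally, so I would first use Lemma \ref{lem0911} to reduce to $v$ (and its approximants) supported in $\{x_1\le R\}$ for a fixed $R$, adding where needed a cut-off in $t$ so that the approximants lie in $C_0^\infty(\overline{\bR\times\bR^n_+})$, and then use that for such functions $\|u\|_{\bH_{p,\theta}^{\gamma-1}(\bR)}\le N(R)\|u\|_{\bH_{p,\theta-p}^{\gamma-1}(\bR)}$: by Proposition \ref{propertyweighted} (3), $H_{p,\theta}^{\gamma-1}(\bR^n_+)=\psi^{-1}H_{p,\theta-p}^{\gamma-1}(\bR^n_+)$, so this amounts to $\|\psi u\|_{H_{p,\theta-p}^{\gamma-1}}\le N(R)\|u\|_{H_{p,\theta-p}^{\gamma-1}}$, which holds because the regularized distance $\psi$ satisfies $\rho(x)^{|\alpha|}|D^\alpha\psi(x)|\le N\rho(x)\le NR$ on $\{x_1\le R\}$, making multiplication by $\psi$ bounded on $H_{p,\theta-p}^{\gamma-1}$ restricted to such functions.

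The step I expect to be the main obstacle is this last norm comparison on $\bR^n_+$, together with the order in which the reductions are carried out: unlike on a bounded domain it is not a bare monotonicity-in-$\theta$ statement, and one must first cut off in $x_1$ via Lemma \ref{lem0911} (and, after that, in $t$) before the multiplier estimate for $\psi$ can be invoked, all the while keeping the approximating functions inside $C_0^\infty(\overline{(S,T)\times\Omega})$. Everything else is a straightforward assembly of Lemmas \ref{lem0905_1}, \ref{lem0910_01}, \ref{lem0905_2}, \ref{lem0911}, Remark \ref{rmk0117}, and the cited classical density result (with the $\gamma=1$ case being exactly Lemma \ref{lem0901}).
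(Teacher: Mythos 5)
Your proposal is correct and follows essentially the same route as the paper: the identical decomposition $u=\cS\cT u+(u-\cS\cT u)$, with Lemma \ref{lem0905_2} reducing the zero-trace part to the classical density in $\fH_{p,\theta}^\gamma(S,T)$ (plus Lemma \ref{lem0911} and the norm comparison $\|\cdot\|_{\widetilde{\fH}_{p,\theta}^\gamma}\le N\|\cdot\|_{\fH_{p,\theta}^\gamma}$ for functions vanishing for large $x_1$ when $\Omega=\bR^n_+$), and Remark \ref{rmk0117} together with Lemmas \ref{lem0905_1} and \ref{lem0910_01} handling the extension part. The only deviations are cosmetic: you spell out the $\psi$-multiplier argument behind the norm comparison on $\bR^n_+$, which the paper leaves implicit, and you take $k\ge\max\{\gamma,2\}$, which makes the inclusion $\widetilde{\fH}_{p,\theta}^k(S,T)\subset\widetilde{\fH}_{p,\theta}^\gamma(S,T)$ immediate.
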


\begin{proof}
As in the proof of Lemma \ref{lem0905_2}, let $\cT$ be the trace operator from $\widetilde{\fH}_{p, \theta}^1(  S, T)$ to $W_{p}^{\mathsf{s}/2, \mathsf{s}}((S, T) \times \partial\Omega)$.
On the other hand, by Propositions \ref{prop0715_01} and \ref{extension_lip} there is an extension operator $\cS$ such that
$$
 \cS : W_{p}^{\mathsf{s}/2, \mathsf{s}}((S, T) \times \partial\Omega) \to \mathcal{W}_{p, \theta + kp}^{k+1}(  S, T)
$$
for a positive integer $k$ with $k \le \gamma < k+1$.
Indeed, to see this, it is enough to consider $(S, T) = \bR$. 
If $\Omega = \bR^n_+$, recall that $\cS g$ in Proposition \ref{prop0715_01} is not only in $\widetilde{\fH}_{p, \theta}^\gamma(\bR)$ but also in $\mathcal{W}_{p, \theta + kp}^{k+1}(\bR \times \Omega)$ as noted in Remark \ref{rmk0117}. For $\partial \Omega \in C^{0, 1}$, it is also true by the boundedness of $\Omega$.

Using the above operators $\cT$ and $\cS$, for $u \in \widetilde{\fH}_{p,\theta}^\gamma(S,T)$, which is an element of $\widetilde{\fH}_{p,\theta}^1(S,T)$ as well, we have 
$$
\cT u \in W_{p}^{s/2, s}( (S, T) \times \partial\Omega), \quad \cS \cT u \in \mathcal{W}_{p, \theta + kp}^{k+1}( S, T  ) \cap \widetilde{\fH}_{p, \theta}^\gamma(S, T).
$$
Since $\cT \left( \cS \cT u\right) = \cT u$, i.e. $\cT\left(u - \cS \cT u \right) = 0$ and $u - \cS \cT u \in \widetilde{\fH}_{p, \theta}^\gamma(S, T)$, we know $u - \cS \cT u \in \fH_{p, \theta}^\gamma(S, T)$ by Lemma \ref{lem0905_2}. That is, we decompose $u \in \widetilde{\fH}_{p, \theta}^\gamma(S, T)$ by 
$$
u = \left( u - \cS \cT u \right) + \cS \cT u := u_1 + u_2
$$
where $u_1 \in \fH_{p, \theta}^\gamma(S, T)$ and $u_2 \in \mathcal{W}_{p, \theta + kp}^{k+1}(S, T )$.

For approximating $u_1$, it is well known that $C_0^{\infty}\left( [S, T] \times \Omega \right)$ is dense in $\fH_{p, \theta}^\gamma( S, T )$ and $\| u_1 \|_{\widetilde{\fH}_{p, \theta}^{\gamma} (   S, T  )} \le N \| u_1 \|_{\fH_{p, \theta}^{\gamma} ( S, T )}$ when $\Omega$ is bounded.
 If $\Omega = \bR^n_+$, it may not be $\| u_1 \|_{\widetilde{\fH}_{p, \theta}^\gamma( S, T )} \le N \| u_1 \|_{\fH_{p, \theta}^\gamma(S, T)}$, but we make use of Lemma \ref{lem0911} with the fact that $\| v \|_{\widetilde{\fH}_{p, \theta}^\gamma(S, T)} \le N \| v \|_{\fH_{p, \theta}^\gamma(S, T)}$ for $v \in \fH_{p, \theta}^\gamma(S, T)$ such that $v$ vanishes for large $x_1$. On the other hand, for $u_2$, we know $C_0^{\infty}(\overline{ (S, T) \times \Omega}) $ is dense in $\mathcal{W}_{p, \theta + kp}^{k+1}(S, T )$ (Lemma \ref{lem0905_1}) and $\mathcal{W}_{p, \theta + kp}^{k+1}(S, T) \subset \widetilde{\fH}_{p, \theta}^k(S, T) \subset \widetilde{\fH}_{p, \theta}^\gamma(S, T)$ (Lemma \ref{lem0910_01}). 
The lemma is proved.
\end{proof}

Now we are ready to prove Theorem \ref{thm0716_01}.
						
\begin{proof}[Proof of Theorem \ref{thm0716_01}]
First, note that the even extension with respect to the $t$ variable allows us to concentrate on the case $T = \infty$.

To prove the trace part, by Lemma \ref{lem0603_1}, it is enough to prove \eqref{eq_trace} for $u \in C_0^\infty(\overline{(S,T) \times \Omega}) \cap \widetilde{\fH}_{p,\theta}^\gamma(S,T)$, which then follows directly from the inequality \eqref{eq1001} or \eqref{eq1002} with the inclusion $\widetilde{\fH}_{p,\theta}^\gamma(\bR) \subset \widetilde{\fH}_{p,\theta}^1(\bR)$.
The extension part is just Proposition \ref{prop0715_01} and Corollary \ref{extension_lip}.
The theorem is proved.
\end{proof}

\begin{remark}
Since $u \in \widetilde{\fH}_{p,\theta}^1(S,T)$ for $u \in \widetilde{\fH}_{p,\theta}^\gamma(S,T)$ with $\gamma \geq 1$ and $C_0^\infty(\overline{(S,T) \times \Omega})$ is dense in $\widetilde{\fH}_{p,\theta}^1(S,T)$ (see Lemma \ref{lem0901}), we may define $\cT u$ to be the limit in $W_p^{\mathsf{s}/2,\mathsf{s}}((S,T) \times \partial \Omega)$ of the sequence $u^m|_{(S,T) \times \partial \Omega}$, where $u^m \in C_0^\infty(\overline{(S,T) \times \Omega})$ and $u^m \to u$ in $\widetilde{\fH}_{p,\theta}^1(S,T)$.
In this way, we may not need the denseness of $C_0^\infty(\overline{(S,T) \times \Omega})$ in $\widetilde{\fH}_{p,\theta}^\gamma(S,T)$.
Our results show that one can choose a trace defining sequence $\{u^m\} \subset C_0^\infty(\overline{(S,T) \times \Omega})$ with the convergence $u^m \to u$ in $\widetilde{\fH}_{p,\theta}^\gamma(S,T)$.
The result (and its proof) that $C_0^\infty(\overline{(S,T) \times \Omega})$ is dense in $\widetilde{\fH}_{p,\theta}^\gamma(S,T)$ for $\gamma \geq 1$ may be of independent interest.
\end{remark}

\section{Boundary value problems}
	\label{app}

Throughout this section we assume that $\Omega$ is a bounded $C^1$ domain, i.e., $\partial \Omega \in C^1$ as in \cite{MR2111792}.
More precisely, there exist constants $R, K\in(0,\infty)$ so that for any
$x_0 \in \partial \Omega$ there exists a one-to-one continuously differentiable mapping $\Psi$ of
 $B_{R}(x_0)$ onto a domain $G \subset \bR^n$ such that

(i) $G_+:=\Psi(B_{R}(x_0) \cap \Omega) \subset \bR^n_+$ and
$\Psi(x_0)=0$;

(ii)  $\Psi(B_{R}(x_0) \cap \partial \Omega)= G \cap \{y\in
\bR^n:y^1=0 \}$;

(iii) $\|\Psi\|_{C^{1}(B_{R}(x_0))}  \leq K $ and
$|\Psi^{-1}(y_1)-\Psi^{-1}(y_2)| \leq K |y_1 -y_2|$ for any $y_i
\in G$;

(iv) $D\Psi$ is uniformly continuous in $B_{R}(x_{0})$.

Thanks to Theorem \ref{thm0716_01}, we are now able to deal with non-zero boundary value problems for divergence/non-divergence type parabolic equations in weighted Sobolev spaces.
To state and prove the results, we introduce some conditions on the lower-order coefficients $b = (b^1,\ldots,b^n), \tilde{b} = (\tilde{b}^1, \ldots, \tilde{b}^n)$, and $c$ as follows.
\begin{assumption}
	\label{assumption_div}
There exist constants $\Lambda$ and $\varepsilon$ such that
\begin{enumerate}
\item[(i)] for $(t, x) \in (0, T) \times \Omega$,
\[
  \left|b(t, x) \right| + \rho(x)| \tilde{b}(t, x) | +\rho(x)\left| c(t, x)\right|  \le \Lambda;
\]

\item[(ii)] for $x \in \Omega$,
\[
 \limsup_{\rho(x) \to 0}  \sup_{t \in (0, T)}  \rho(x)| \tilde{b}(t, x) | \le \varepsilon.
\]
\end{enumerate}
\end{assumption}

Recall the definition of $W_{p, 0}^{\mathsf{s}/2, \mathsf{s}}((S, T) \times \partial\Omega)$ introduced in Section \ref{setting}.
One can check that $W_p^{\mathsf{s}/2, \mathsf{s}}((S, T) \times \partial\Omega) = W_{p,0}^{\mathsf{s}/2, \mathsf{s}}((S, T) \times \partial\Omega)$ if $\mathsf{s}p < 2$ by a standard approximation argument.

\begin{theorem}[Divergence equation]
							\label{thm0914_01}
Let $1 < p < \infty$, $T \in (0, \infty)$, and $n-1 < \theta < n-1+p $	 with $\mathsf{s} = \left( n - 1 + p - \theta \right)/p$.					
Then there exists a positive constant $\varepsilon$ depending only on $n$, $p$, $\theta$, and $K$ such that under Assumption  \ref{assumption_div}, the equation
\begin{align}
                      \label{eq200914_04}
- u_t + D_i \left(D_iu + b^iu \right) + \tilde{b}^i D_i u + cu &= D_if_i  \\
		    \label{eq200914_05}
 u|_{(0, T) \times \partial \Omega} &= g
\end{align}
admits a unique solution $u\in \widetilde{\fH}^{1}_{p,\theta}(0,T)$ with $u(0,\cdot) = 0$ for any $f_i \in \bL_{p,\theta}(0,T)$, $i = 1, \ldots, n$, and $g \in W_{p, 0}^{\mathsf{s}/2, \mathsf{s}}((0, T) \times \partial\Omega)$. Moreover, for this solution we have
\begin{equation}
                        \label{200914_06}
\|u\|_{\widetilde{\fH}^{1}_{p,\theta}(0,T)} \leq
N\left( \sum_{i=1}^n \| f_i \|_{\bL_{p,\theta}(0,T)} + \|g\|_{W_p^{\mathsf{s}/2, \mathsf{s}}((0, T) \times \partial\Omega)} \right),
\end{equation}
where $N=N(n, p,\theta, T, K, R, \Lambda, \operatorname{diam}(\Omega))$.
\end{theorem}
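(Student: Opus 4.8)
The plan is to reduce problem \eqref{eq200914_04}--\eqref{eq200914_05} to a parabolic equation with \emph{homogeneous} lateral boundary condition, for which $L_p$-solvability in $\fH_{p,\theta}^1(0,T)$ with zero initial value is already available, and then to recover $u$ by adding back an extension of the boundary datum $g$. First I would use the extension operator $\cS$ of Theorem \ref{thm0716_01} (with $\gamma=1$): since $\mathsf{s}p = n-1+p-\theta < p < 2$, we have $W_{p}^{\mathsf{s}/2,\mathsf{s}} = W_{p,0}^{\mathsf{s}/2,\mathsf{s}}$ on $(0,T)\times\partial\Omega$, so $g$, extended by zero to $(-\infty,T)\times\partial\Omega$, still lies in $W_p^{\mathsf{s}/2,\mathsf{s}}$ with comparable norm. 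Recalling that the heat-kernel extension used in Proposition \ref{prop0715_01} (and its flattened version in Corollary \ref{extension_lip}) preserves vanishing for $t\le 0$, the function $w := \cS g$, restricted to $(0,T)\times\Omega$, satisfies $w\in\widetilde{\fH}_{p,\theta}^1(0,T)$, $\cT w = g$, $w(0,\cdot)=0$, and $\|w\|_{\widetilde{\fH}_{p,\theta}^1(0,T)} \le N\|g\|_{W_p^{\mathsf{s}/2,\mathsf{s}}((0,T)\times\partial\Omega)}$. Writing $w_t = D_i h_i$ in $(0,T)\times\Omega$ with $h_i \in \bL_{p,\theta}(0,T)$ (possible since $w_t \in \bH_{p,\theta+p}^{-1}(0,T)$), the function $v := u - w$ must solve
\begin{equation*}
\begin{aligned}
-v_t + D_i(D_iv + b^iv) + \tilde b^i D_iv + cv &= D_i\!\left(f_i - D_iw - b^iw + h_i\right) - \tilde b^i D_iw - cw,
\\
v|_{(0,T)\times\partial\Omega} &= 0,
\end{aligned}
\end{equation*}
with $v(0,\cdot)=0$, and conversely any such $v \in \fH_{p,\theta}^1(0,T)$ gives $u = v + w$; note that a $v\in\widetilde{\fH}_{p,\theta}^1(0,T)$ with zero lateral trace automatically lies in $\fH_{p,\theta}^1(0,T)$ by Lemma \ref{lem0905_2}.

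The right-hand side above belongs to $\bH_{p,\theta+p}^{-1}(0,T)$: the divergence part $f_i - D_iw - b^iw + h_i$ is in $\bL_{p,\theta}(0,T)$ by Assumption \ref{assumption_div}(i) and $w\in\widetilde{\fH}_{p,\theta}^1$, while for the zeroth-order terms $\int_\Omega |\tilde b^i D_iw|^p\rho^{p+\theta-n}\,dx \le N\Lambda^p\|Dw(t,\cdot)\|_{L_{p,\theta}(\Omega)}^p$ and $\int_\Omega |cw|^p\rho^{p+\theta-n}\,dx \le \Lambda^p\|w(t,\cdot)\|_{L_{p,\theta}(\Omega)}^p$, so $\tilde b^iD_iw + cw \in \bL_{p,\theta+p}(0,T)\subset\bH_{p,\theta+p}^{-1}(0,T)$ by Proposition \ref{propertyweighted}; all pieces are bounded by $N(\sum_i\|f_i\|_{\bL_{p,\theta}(0,T)} + \|g\|_{W_p^{\mathsf{s}/2,\mathsf{s}}})$. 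Then I would invoke the known $L_p$-solvability theory for divergence-type parabolic equations in $\fH_{p,\theta}^1(0,T)$ with zero lateral boundary value and zero initial value. The one genuinely delicate ingredient is the drift $\tilde b$, which may blow up like $\rho(x)^{-1}$. Using Assumption \ref{assumption_div}(ii) I would split $\tilde b = \tilde b_1 + \tilde b_2$, where $\tilde b_1$ is supported in a thin one-sided neighbourhood of $\partial\Omega$ on which $\rho|\tilde b_1| \le 2\varepsilon$ and $\tilde b_2 \in L_\infty((0,T)\times\Omega)$ is supported away from $\partial\Omega$. For $v\in\fH_{p,\theta}^1(0,T)$,
\[
\|\tilde b_1^i D_iv\|_{\bH_{p,\theta+p}^{-1}(0,T)} \le N_1\|\tilde b_1^i D_iv\|_{\bL_{p,\theta+p}(0,T)} \le 2N_1\varepsilon\|Dv\|_{\bL_{p,\theta}(0,T)} \le N_0\varepsilon\|v\|_{\fH_{p,\theta}^1(0,T)},
\]
so choosing $\varepsilon = \varepsilon(n,p,\theta,K)$ small enough that $N_0\varepsilon$ is below the reciprocal of the a priori constant of the homogeneous theory, this term is absorbed; the remaining contributions $\tilde b_2^i D_iv$, $D_i(b^iv)$, $cv$ are lower-order perturbations treated in the usual way and are already admitted by the quoted $\fH_{p,\theta}^1$-result under Assumption \ref{assumption_div}. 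This yields a unique $v\in\fH_{p,\theta}^1(0,T)$ with $v(0,\cdot)=0$ and $\|v\|_{\fH_{p,\theta}^1(0,T)} \le N\left(\sum_i\|f_i\|_{\bL_{p,\theta}(0,T)} + \|g\|_{W_p^{\mathsf{s}/2,\mathsf{s}}((0,T)\times\partial\Omega)}\right)$.

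To finish, set $u = v + w$. Since $\Omega$ is bounded, Proposition \ref{propertyweighted} gives $\fH_{p,\theta}^1(0,T) \subset \widetilde{\fH}_{p,\theta}^1(0,T)$ with $\|v\|_{\widetilde{\fH}_{p,\theta}^1}\le N\|v\|_{\fH_{p,\theta}^1}$, hence $u\in\widetilde{\fH}_{p,\theta}^1(0,T)$; moreover $\cT u = \cT v + \cT w = g$, $u(0,\cdot)=0$, by construction $u$ solves \eqref{eq200914_04}--\eqref{eq200914_05}, and \eqref{200914_06} follows by adding the two estimates above. For uniqueness, if $u_1,u_2$ both solve the problem then $u_1-u_2\in\widetilde{\fH}_{p,\theta}^1(0,T)$ has zero lateral trace, so $u_1-u_2\in\fH_{p,\theta}^1(0,T)$ by Lemma \ref{lem0905_2}; it solves the homogeneous equation with zero forcing and zero initial value, hence vanishes by the uniqueness part of the quoted $\fH_{p,\theta}^1$-theory.

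The main obstacle is the absorption of the singular drift term $\tilde b^iD_iu$ near $\partial\Omega$ in the third step; this is precisely what forces the smallness requirement on $\varepsilon$ in Assumption \ref{assumption_div}(ii). Everything else is a soft combination of the extension theorem (Theorem \ref{thm0716_01}), the identification of zero-lateral-trace functions in $\widetilde{\fH}_{p,\theta}^1$ with $\fH_{p,\theta}^1$ (Lemma \ref{lem0905_2}), and the pre-existing homogeneous $\fH_{p,\theta}^1$-theory; minor care is needed in the first step to guarantee $w(0,\cdot)=0$, and in the second step to see that the reduced right-hand side, which involves the merely distributional $w_t$, still lies in $\bH_{p,\theta+p}^{-1}(0,T)$.
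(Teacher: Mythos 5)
Your proposal follows essentially the same route as the paper: extend $g$ via the operator of Theorem \ref{thm0716_01} to $w\in\widetilde{\fH}_{p,\theta}^1(0,T)$ with $w(0,\cdot)=0$, subtract to reduce to a zero lateral boundary, zero initial value problem whose right-hand side lies in $\bH_{p,\theta+p}^{-1}(0,T)$, solve it by the quoted $\fH_{p,\theta}^1$-solvability for divergence-form equations (in which the smallness of $\rho|\tilde b|$ near $\partial\Omega$ is already built in, so your absorption sketch is a re-derivation rather than a needed new step), and conclude by adding the two pieces and using Lemma \ref{lem0905_2} for uniqueness, exactly as in the paper. The only slip is the assertion $\mathsf{s}p<p<2$ (false when $p\ge 2$), but it is immaterial since the hypothesis $g\in W_{p,0}^{\mathsf{s}/2,\mathsf{s}}((0,T)\times\partial\Omega)$ already provides precisely what is needed for the zero extension in time and for $w(0,\cdot)=0$.
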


\begin{remark}
For simplicity's sake, we only deal with the Laplace operator in the equation \eqref{eq200914_04}.
However, Theorem \ref{thm0914_01} can cover an equation with $D_i(a^{ij}D_ju)$ in place of $\Delta u$ as long as the equation is uniquely solvable in the corresponding weighted Sobolev spaces (indeed, $\fH^{1}_{p,\theta}$) with the homogeneous boundary condition, and there are many results on the unique solvability of zero boundary value problems in weighted Sobolev spaces under weaker conditions on $a^{ij}(t,x)$. For instance, one can have $a^{ij}$ in the class of functions with vanishing mean oscillations (VMO).
We refer the reader to \cite{MR2990037, MR3147235, MR3318165, MR3541508, arXiv:1809.01325}.
\end{remark}

\begin{proof}[Proof of Theorem \ref{thm0914_01}]
By Theorem \ref{thm0716_01}, we have an extension $v \in \widetilde{\fH}_{p, \theta}^1(0, T)$ of $g$ such that $v(0, \cdot) = 0$ since $g \in W_{p, 0}^{\mathsf{s}/2, \mathsf{s}}((0, T) \times \partial\Omega)$.
From the definition of $\widetilde{\fH}_{p, \theta}^1(0, T)$ and Assumption \ref{assumption_div} (i), we have 
\[
b^iv \in \bL_{p, \theta}(0, T),\quad \tilde{b}^iD_iv,\,\, cv \in \bL_{p, \theta + p}(0, T) \subset \bH_{p, \theta + p}^{-1}(0, T)
\]
for $i = 1, \ldots, n$. Thus
\[
D_i(f_i - D_iv - b^iv) + v_t - \tilde{b}^iD_iv - cv \in \bH_{p, \theta + p}^{-1}(0,T).
\]
 Now by \cite[Theorem 6.6]{MR3147235}, there exists an $\varepsilon = \varepsilon(n, p, \theta, K)>0$ such that under Assumption \ref{assumption_div} there is a unique solution $w \in \fH_{p, \theta}^1(0, T)$ of the equation
$$
-w_t + D_i \left( D_i w + b^i w \right) + \tilde{b}^iD_iw + cw 
$$
$$
= D_i\left( f_i - D_iv - b^iv \right) + v_t - \tilde{b}^iD_iv - cv := D_i\tilde{f_i}  + \tilde{f_0}
$$
in $(0, T) \times \Omega$ and $w(0, \cdot) = 0$ with the estimate
\begin{equation}
	\label{sol_homo_div}
\|w\|_{\widetilde{\fH}_{p, \theta}^1(0, T)} \le N\|w\|_{\fH_{p, \theta}^1(0, T)}  \le N \left( \| \tilde{f_i} \|_{\bL_{p, \theta}(0, T)} +  \| \tilde{f_0} \|_{\bH_{p, \theta+p}^{-1}(0, T)}   \right),
\end{equation}
where $N = N(n, p, \theta, T, K, \Lambda, \operatorname{diam}(\Omega))$ and the first inequality is due to the boundedness of $\Omega$.
Note that the assumption for $b$ and $c$ in Assumption \ref{assumption_div} (i) is stronger than the corresponding one in \cite{MR3147235}.
Then $u = v + w$ is the desired solution to \eqref{eq200914_04}--\eqref{eq200914_05}. The estimate \eqref{200914_06} follows from \eqref{sol_homo_div} and \eqref{extension}.
\end{proof}

Similarly, we obtain the next theorem with the help of \cite[Theorem 7.7]{MR2990037}. Note that the assumption for $c$ in Assumption \ref{assumption_nondiv} (i) is stronger than the corresponding one in \cite{MR2990037}.

\begin{assumption}
	\label{assumption_nondiv}
There exist constants $\Lambda$ and $\varepsilon$ such that
\begin{enumerate}
\item[(i)] for $(t, x) \in (0, T) \times \Omega$,
\[
 \rho(x)| b(t, x) | +\rho(x)\left| c(t, x)\right|  \le \Lambda;
\]

\item[(ii)] for $x \in \Omega$,
\[
\limsup_{\rho(x) \to 0}\sup_{t \in (0, T)} \rho(x)\left| b(t, x)\right|  \le \varepsilon.
\]
\end{enumerate}

\end{assumption}

\begin{theorem}[Non-divergence equation]
							\label{thm0601_3}
Let $1 < p < \infty$, $T \in (0, \infty)$, and $n-1 < \theta < n-1+p $ with $\mathsf{s} = \left( n - 1 + p - \theta \right)/p$.					
Then there exists a positive constant $\varepsilon$ depending only on $n$, $p$, $\theta$, and $K$ such that under Assumption  \ref{assumption_nondiv}, the equation
\begin{align}
                      \label{eq200914_01}
- u_t + \Delta u + b^iD_iu + cu &= f \\
			\label{eq200914_02}
 u|_{(0, T) \times \partial \Omega} &= g
\end{align}
 admits a
unique solution $u\in \widetilde{\fH}^{2}_{p,\theta}(0,T)$ with $u(0,\cdot) = 0$ for any
  $f\in \bL_{p,\theta + p}(0,T)$ and $g \in W_{p, 0}^{\mathsf{s}/2, \mathsf{s}}((0, T) \times \partial\Omega)$. Morerover, for this solution we have
\begin{equation}
                        \label{200914_03}
\|u\|_{\widetilde{\fH}^{2}_{p,\theta}(0,T)} \leq
N\left(\| f\|_{\bL_{p,\theta + p}(0,T)} + \|g\|_{W_p^{\mathsf{s}/2, \mathsf{s}}((0, T) \times \partial\Omega)} \right),
\end{equation}
where $N=N(n, p,\theta, T, K, R, \Lambda, \operatorname{diam}(\Omega) )$.
\end{theorem}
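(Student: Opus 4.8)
The plan is to reduce to the homogeneous boundary value problem exactly as in the proof of Theorem \ref{thm0914_01}, replacing $\widetilde{\fH}^1_{p,\theta}$ by $\widetilde{\fH}^2_{p,\theta}$ and the solvability input \cite[Theorem 6.6]{MR3147235} by the known solvability result for non-divergence parabolic equations with zero lateral data, \cite[Theorem 7.7]{MR2990037}. First, since $g \in W_{p,0}^{\mathsf{s}/2,\mathsf{s}}((0,T)\times\partial\Omega)$ (equivalently $\mathsf{s}p<2$), I would apply the extension part of Theorem \ref{thm0716_01} with $\gamma=2$ to obtain $v \in \widetilde{\fH}^2_{p,\theta}(0,T)$ with $\cT v = g$, $v(0,\cdot)=0$, and $\|v\|_{\widetilde{\fH}^2_{p,\theta}(0,T)} \le N\|g\|_{W_p^{\mathsf{s}/2,\mathsf{s}}}$; the vanishing of $v$ at $t=0$ is arranged by first extending $g$ by zero past $t=0$ before applying $\cS$, as in the divergence case.

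Next I would check that substituting $u=v+w$ turns \eqref{eq200914_01}--\eqref{eq200914_02} into a zero-boundary problem with right-hand side in $\bL_{p,\theta+p}(0,T)$. Since $v \in \widetilde{\fH}^2_{p,\theta}(0,T)$ we have $Dv \in \bH^1_{p,\theta}(0,T)$, so Proposition \ref{propertyweighted}(5) gives $D^2 v \in \bL_{p,\theta+p}(0,T)$ and hence $\Delta v \in \bL_{p,\theta+p}(0,T)$, while $v_t \in \bL_{p,\theta+p}(0,T)$ by definition of $\widetilde{\fH}^2_{p,\theta}$. For the lower-order terms, Assumption \ref{assumption_nondiv}(i) ($\rho|b|+\rho|c|\le\Lambda$) together with Proposition \ref{propertyweighted}(3) ($\rho^{-1}L_{p,\theta}=L_{p,\theta+p}$) and $v,\,Dv \in \bL_{p,\theta}(0,T)$ yields $b^iD_iv=(\rho b^i)(\rho^{-1}D_iv)\in\bL_{p,\theta+p}(0,T)$ and $cv=(\rho c)(\rho^{-1}v)\in\bL_{p,\theta+p}(0,T)$. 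Therefore $\tilde f := f-\Delta v-b^iD_iv-cv+v_t \in \bL_{p,\theta+p}(0,T)$, with norm bounded by $N(\|f\|_{\bL_{p,\theta+p}(0,T)}+\|v\|_{\widetilde{\fH}^2_{p,\theta}(0,T)})$.

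Then I would invoke \cite[Theorem 7.7]{MR2990037}: there is $\varepsilon=\varepsilon(n,p,\theta,K)>0$ so that under Assumption \ref{assumption_nondiv} the problem $-w_t+\Delta w+b^iD_iw+cw=\tilde f$ in $(0,T)\times\Omega$ with $w(0,\cdot)=0$ has a unique solution $w\in\fH^2_{p,\theta}(0,T)$ with $\|w\|_{\fH^2_{p,\theta}(0,T)}\le N\|\tilde f\|_{\bL_{p,\theta+p}(0,T)}$ (noting, as in the divergence case, that our hypothesis on $c$ is stronger than the one in \cite{MR2990037}). Because $\Omega$ is bounded, Proposition \ref{propertyweighted}(2),(5),(6) give $\fH^2_{p,\theta}(0,T)\hookrightarrow\widetilde{\fH}^2_{p,\theta}(0,T)$ with $\|w\|_{\widetilde{\fH}^2_{p,\theta}}\le N\|w\|_{\fH^2_{p,\theta}}$, so $u:=v+w\in\widetilde{\fH}^2_{p,\theta}(0,T)$ is the desired solution, $u(0,\cdot)=0$, and \eqref{200914_03} follows by chaining the bound for $w$, the bound for $\tilde f$, and the extension estimate \eqref{extension}. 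For uniqueness, if $u_1,u_2\in\widetilde{\fH}^2_{p,\theta}(0,T)$ both solve the problem, then $u_1-u_2$ has zero lateral trace, so by Lemma \ref{lem0905_2} it lies in $\fH^2_{p,\theta}(0,T)$, has zero initial value, and solves the homogeneous equation; uniqueness in \cite[Theorem 7.7]{MR2990037} forces $u_1=u_2$.

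This theorem is essentially a corollary of Theorem \ref{thm0716_01} and \cite[Theorem 7.7]{MR2990037}, so there is no deep mathematical obstacle; the step requiring the most care is the regularity bookkeeping for $\tilde f$ — in particular verifying that the lower-order terms $b^iD_iv$ and $cv$, which would only be in $\bL_{p,\theta}$ without the weight shift, actually land in $\bL_{p,\theta+p}(0,T)$ thanks to the $\rho$-weighted bounds in Assumption \ref{assumption_nondiv}(i) — together with checking that Assumption \ref{assumption_nondiv} matches (and is no weaker than) the hypotheses of \cite[Theorem 7.7]{MR2990037} and that the $\varepsilon$ it produces is the one claimed here.
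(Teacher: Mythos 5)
Your proposal is correct and follows exactly the route the paper intends: the paper proves this theorem by the same argument as Theorem \ref{thm0914_01} (extend $g$ via Theorem \ref{thm0716_01}, subtract the extension, and solve the resulting zero-boundary problem by \cite[Theorem 7.7]{MR2990037}), which is precisely your decomposition $u=v+w$. Your write-up merely supplies the bookkeeping the paper leaves implicit (the weight shift $\rho^{-1}\bL_{p,\theta}=\bL_{p,\theta+p}$ for the lower-order terms, the embedding $\fH^2_{p,\theta}\subset\widetilde{\fH}^2_{p,\theta}$ on bounded domains, and uniqueness via Lemma \ref{lem0905_2}), and these details are all accurate.
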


\begin{remark}
\label{regularity}
In \cite[Theorem 2.10]{MR2111792}, the second author of this paper and Krylov proved that, for given $f \in \bH_{p, \theta + p}^{\gamma}(0, T)$, $\gamma \in \bR$, there is a unique solution $u \in \fH_{p, \theta}^{\gamma+2}(0, T)$ to the equation 
\[
-u_t + a^{ij}D_{ij}u + b^i D_iu + cu = f.
\]
For this, they imposed some regularity conditions (see Assumption 2.3 in \cite{MR2111792}) on the coefficients, but the boundary of the domain is to be only $\partial\Omega \in C^1$ regardless of $\gamma \in \bR$. 
On the other hand, Theorem \ref{thm0716_01} (and Remark \ref{finite_time}) allows an extension $\mathcal{S} g \in \widetilde{\fH}_{p, \theta}^{\gamma}(0, T)$ for any $\gamma \in \bR$, provided that $g \in W_{p}^{\mathsf{s}/2, \mathsf{s}}((0, T) \times \partial\Omega)$ with $\partial\Omega \in C^{0,1}$ and $\mathsf{s} = (p-\theta+n-1)/p$, where $s$ is independent of $\gamma$.
Thus, in Theorem \ref{thm0601_3}, if we further assume that the coefficients $b$ and $c$ are sufficiently smooth (we are assuming $a^{ij} = \Delta$ just for the simplicity) and $\gamma \ge -1$, we have a unique solution $u \in \widetilde{\fH}_{p, \theta}^{\gamma + 2}(0, T)$ to the equation \eqref{eq200914_01}--\eqref{eq200914_02} with $u(0, \cdot) = 0$ for any $g \in W_{p, 0}^{\mathsf{s}/2, \mathsf{s}}((0, T) \times \partial\Omega)$ and $f \in \bH_{p, \theta + p}^{\gamma}(0, T)$. (Here, we need $\gamma \ge -1$ for the well-definedness of $\cT u$.) That is, we obtain higher regularity of a solution to the equation \eqref{eq200914_01}-\eqref{eq200914_02} without any further regularity conditions on $g$ and $\partial\Omega$ other than those used in Theorem \ref{thm0601_3}. 

The same statement holds for Theorem \ref{thm0914_01} since one can change the divergence type operator into non-divergence form if the coefficients are sufficiently smooth.
\end{remark}

\bibliographystyle{plain}

\def\cprime{$'$}


\end{document}